\newtheorem{theorem}{Theorem}[section]
\newtheorem{lemma}[theorem]{Lemma}
\newtheorem{proposition}[theorem]{Proposition}
\newtheorem{corollary}[theorem]{Corollary}
\theoremstyle{definition}
\newtheorem{definition}[theorem]{Definition}
\newtheorem{remark}[theorem]{Remark}
\newcommand{\Z}{\mathbb{Z}}
\newcommand{\Q}{\mathbb{Q}}
\newcommand{\C}{\mathcal{C}}
\newcommand{\Ow}{\mathcal{O}}
\newcommand{\ot}{\otimes}
\newcommand{\ti}{\widetilde}
\newcommand{\Ga}{\Gamma}
\newcommand{\ra}{\rightarrow}
\newcommand{\ep}{\epsilon}
\renewcommand{\l}{\ell}
\newcommand{\la}{\lambda}
\newcommand{\wb}{\overline}
\newcommand{\dott}{\bullet}
\newcommand{\ga}{\gamma}
\begin{document}
\title[Hopf algebras via invariant theory]{Semisimple Hopf algebras via geometric invariant theory}
\author{Ehud Meir}
\address{University of Hamburg, Department of Mathematics, Bundesstrasse 55, 20146 Hamburg, Germany}
\email{meirehud@gmail.com}
\begin{abstract}
We study Hopf algebras via tools from geometric invariant theory.
We show that all the invariants we get can be constructed using the integrals of the Hopf algebra
and its dual together with the multiplication and the comultiplication,
and that these invariants determine the isomorphism class of the Hopf algebra.
We then define certain canonical subspaces $Inv^{i,j}$  of tensor powers of $H$ and $H^*$, 
and use the invariant theory to prove that these subspaces satisfy a certain non-degeneracy condition.
Using this non-degeneracy condition together with results on symmetric monoidal categories, we prove that the spaces $Inv^{i,j}$ can also be described as $(H^{\ot i}\ot (H^*)^{\ot j})^A$,
where $A$ is the group of Hopf automorphisms of $H$.
As a result we prove that the number of possible Hopf orders of 
any semisimple Hopf algebra over a given number ring is finite.
we give some examples of these invariants arising from the theory
of Frobenius-Schur Indicators, and from Reshetikhin-Turaev invariants of three manifolds.
We give a complete description of the invariants for a group algebra, proving that they all encode the number of homomorphisms from some finitely presented group to the group.
We also show that if all the invariants are algebraic integers, then the Hopf algebra satisfies Kaplansky's sixth conjecture:  the dimensions of the irreducible representations of $H$ divide the dimension of $H$.
\end{abstract}
\maketitle
\begin{section}{Introduction}\label{introduction}
In this paper we develop an approach for studying finite dimensional semisimple Hopf algebra over an algebraically closed field of characteristic zero $K$ by means of geometric invariant theory.
Two basic examples to keep in mind for such Hopf algebras are the group algebra $KG$ for a finite group $G$ and its dual, the function algebra $K[G]$.
A more evolved example is given as follows: if a finite group $G$ acts on a finite group $N$, then one can construct the semi-direct product
$KG\ltimes K[N]$, which is also a Hopf algebra. All these Hopf algebras are examples of \textit{group-theoretical Hopf algebras}.
In \cite{ENO} Etingof Nikshych and Ostrik defined the notion of a group theoretical fusion category, 
and defined group-theoretical Hopf algebras to be Hopf algebras whose representation categories are group theoretical.
They also asked whether every finite dimensional semisimple Hopf algebra is group theoretical.
In \cite{Nikshych}, Nikshych gave a counterexample, by presenting a family of finite dimensional semisimple Hopf algebras 
whose representation categories are only weakly group theoretical.
Nevertheless, all the known examples of finite dimensional semisimple Hopf algebras are constructed in one way or another from 
some group theoretical data. 

A lot is known about Hopf algebras of some restricted dimensions. 
To name a few examples, Zhu proved in \cite{Zhu} that if the dimension $p$ of a Hopf algebra $H$ is prime, 
then this Hopf algebra is isomorphic with the group algebra of the cyclic group of order $p$.
In case the dimension of $H$ is $pq$ where $p$ and $q$ are two distinct prime numbers
it is known that $H$ is isomorphic either with a group algebra or a dual group algebra (see \cite{EG1}, \cite{Som1} and \cite{GW}).
For more classification result see the work \cite{Natale1} of Natale. 

In general, classifying \textit{all} finite dimensional semisimple Hopf algebras in terms of some group-theoretical data is hard, and seems to be out of reach at the moment.
The source of difficulty can be understood in the following way: we do know that if $H$ is semisimple then $H^*$ is also semisimple,
by Larson-Radford Theorem. We can then write both $H$ and $H^*$ as the direct sum of matrix algebras over $K$.
The problem is that even though we know the algebra structure and the coalgebra structure of $H$, 
we do not know how these structures interact. It is possible that two non-isomorphic Hopf algebras will have the same algebra and coalgebra structures (for example $K\Z/4$ and $K\Z/2\times \Z/2$ or $K Q_8$ and $K D_8$). 
The goal of this paper is to present tools from geometric invariant theory in order to study this problem.
Our starting point will be to ``translate'' the classification question into an algebraic-geometric question.
In Section \ref{constructingX} we will construct a variety $X$ and an algebraic group $G$ which acts on $X$,
such that the orbits of this action correspond to the different isomorphism types of Hopf algebras with given algebra and coalgebra structures.
We will then show that $G$ has only finitely many orbits in $X$ and that they are all of the same dimension and therefore closed.
This will follow from a theorem of Stefan (see \cite{Stefan}) and a theorem of Radford (see \cite{Radford-aut}) 
about the finiteness of the group of automorphisms of a finite dimensional semisimple Hopf algebra.
This fact enables us to apply the techniques of Mumford's Geometric Invariant Theory (or GIT).
The main result from GIT which we shall use is the fact that 
the invariant polynomials in $K[X]^G$ define 
the isomorphism type of the Hopf algebra (In fact we will have an isomorphism $K[X]^G\cong K[X/G]$).

In Sections \ref{tiG-invariants} and \ref{G-invariants} we will describe the invariants explicitly, using the methods of \cite{Procesi}.
More generally, we will define some canonical elements in $H^{i,j}:=H^{\ot i}\ot (H^*)^{\ot j}$ in the following way: 
let $\l\in H$ and $\la\in H^*$ be the integrals in $H$ and in $H^*$ which satisfy $\ep(\l) = \la(1) = dim(H)$
(these are also the characters of the regular representations of $H^*$ and $H$ respectively).
Consider the element $\l^{\ot a}\ot \la^{\ot b}\in H^{a,b}$. For some $a$ and $b$.
By applying comultiplication repeatedly to some of the tensor factors
we can get an element in $H^{m+i,m+j}$ for some $m,i$ and $j$. By applying some permutations in $S_{m+i}$ and $S_{m+j}$ 
and by pairing the first $m$ tensor factors of $H$ with the first $m$ tensor factors of $H^*$ we get an element in $H^{i,j}$.
We call such elements \textit{$(i,j)$-basic invariants}. In particular, $(0,0)$-basic invariants are scalars (to abbreviate, we will just call them basic invariants). 
In Section \ref{G-invariants} we will prove the following theorem:
\begin{theorem}\label{main1}
The basic invariants span $K[X]^G\cong K[X/G]$
after a finite localization. 
They therefore determine the isomorphism type of $H$.
\end{theorem}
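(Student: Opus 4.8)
The plan is to combine the geometric invariant theory of Section~\ref{constructingX} with an explicit first fundamental theorem description of the invariants and a short list of ``reconstruction'' identities. Since the orbits of $G$ on $X$ are finite in number and all closed, we already have $K[X]^G\cong K[X/G]$ and this ring separates the orbits; hence the last sentence of the theorem will follow automatically once the basic invariants are shown to span $K[X]^G$ after a finite localization. The first step of my plan is therefore to describe $K[X]^G$ concretely: realize $X$ as a closed $G$-stable subvariety of an ambient affine space $W$ of tensors on which $G$ acts by base change; as the relevant group is reductive, the restriction map $K[W]^G\twoheadrightarrow K[X]^G$ is surjective, and by the methods of \cite{Procesi} the algebra $K[W]^G$ is spanned by the \emph{iterated contractions} of the structure tensors of $H$ --- the multiplication $m$, the comultiplication $\Delta$, the unit $u$, the counit $\epsilon$, and the antipode $S$. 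So it suffices to rewrite an arbitrary such contraction as a linear combination of $(0,0)$-basic invariants.

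The heart of the argument is to eliminate $u,\epsilon,S$ in favour of $m,\Delta$ and the two integrals, and to note that the integrals are themselves contractions. With the normalization $\epsilon(\l)=\la(1)=\dim H$, the functional $\la\in H^{*}$ is the character of the regular representation of $H$, i.e.\ $\la(h)=\mathrm{tr}(x\mapsto hx)$, which is a single iterated contraction of $m$; dually $\l\in H$ is the character of the regular representation of $H^{*}$, an iterated contraction of $\Delta$. Using semisimplicity one then writes $u=1_H$ and $\epsilon$ in terms of $m$, $\Delta$ and the separability idempotent of $H$, the latter being the inverse tensor of the nondegenerate trace form $\langle a,b\rangle=\mathrm{tr}(x\mapsto abx)$ --- itself a contraction of $m$ --- while $S$ is recovered from $m,\Delta,\l,\la$ by the Larson--Radford / Fourier-transform formulas, i.e.\ by composing the Fourier isomorphism $f\mapsto\sum f(\l_1)\l_2$ with $\la$ and inverting it. All of these identities hold once one inverts $\dim H$ together with finitely many Gram determinants; since these functions are nonzero at every point of $X$ (where $H$ stays semisimple), inverting them is exactly the finite localization in the statement.

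The remaining point is a bookkeeping one. The collection of $(i,j)$-basic tensors is, by its very definition, closed under tensor products, under permutations of the tensor factors, and under partial pairings; moreover, using the formulas just quoted for $m$ and $\Delta$, it is also closed under applying one multiplication or one comultiplication to tensor factors, since each such operation is realized by tensoring with a fixed basic tensor and contracting. Granting this, I would take an arbitrary iterated contraction of the structure tensors, substitute the reconstruction formulas for $u,\epsilon,S$, and expand by multilinearity: the result is a linear combination of iterated contractions of copies of $\l$ and $\la$ assembled by comultiplications, a permutation, and pairings --- that is, of $(0,0)$-basic invariants. This proves that the basic invariants span $K[X]^G$ after the localization, and combined with $K[X]^G\cong K[X/G]$ it gives that they determine the isomorphism type of $H$.

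The main obstacle is twofold. First, one has to produce the reconstruction identities for $u$, $\epsilon$ and especially $S$ in a \emph{contraction} form that uses only $m$, $\Delta$ and the two integrals, uniformly over all semisimple $H$ and with no appeal to block idempotents or to a choice of irreducible representations; this is where one leans on the nondegeneracy of the trace form and of the integral pairing and on the classical identities for integrals. Second, one has to verify that after substitution and re-association --- using associativity, coassociativity, and the defining equations of the integrals --- an \emph{arbitrary} closed tensor diagram genuinely collapses to the restrictive ``comultiply, permute, pair'' shape of a basic invariant, and to pin down exactly which finitely many functions on $X$ must be inverted for every step of this reduction to be legitimate. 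The GIT input (finiteness and closedness of the orbits) and the surjectivity of the Reynolds operator are only used to guarantee that nothing is lost in passing from $W$ to $X$, so those parts are routine once the setup is in place.
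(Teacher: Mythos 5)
Your argument is correct in outline, but it follows a genuinely different route from the paper's --- essentially the construction of Datt--Kodiyalam--Sunder \cite{dks}, which the remark after Theorem \ref{main1} explicitly credits with the first proof of this statement. The paper fixes the algebra structure $A$ and the coalgebra structure $B^*$ once and for all, takes $X$ to be the variety of gluing tensors $T\in A\ot B$, and takes $G=Aut_{alg}(A,\ep_A)\times Aut_{alg}(B,\ep_B)$; the invariant theory is then Procesi's trace description of $Hom_{PGL(W)}(End(W)^{\ot n},K)$ for the connected part $\ti{G}$, followed by an averaging argument over the finite quotient $G/\ti{G}$ (which permutes irreducibles of equal dimension), and it is only in that last step that Hopf-algebraic identities enter, via the elements $c_H^{2,d}$ and Corollary \ref{ekses}, to replace arbitrary irreducible characters by the regular characters $\l$ and $\la$. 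You instead work on a space of all structure tensors with the group acting by base change, invoke the first fundamental theorem to get complete contractions, and then use reconstruction identities ($\la$ as a contraction of $m$, $S$ from Equation \ref{id1}, $m\in Inv^{1,2}$, and so on) to collapse an arbitrary contraction to a basic invariant. Your route is more uniform and avoids the block-permutation combinatorics entirely; the paper's route is chosen because the intermediate $\ti{G}$-invariant theory (the character basic invariants) is needed later for Theorems \ref{tildeGinv}, \ref{main4.5} and \ref{main5}. The absorption step you single out as the main obstacle is exactly the content of the lemma in Section \ref{prelim} showing that the collection $Inv^{i,j}$ contains $S$ and $Id_H$ and is closed under multiplication, so that part is secure.

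One caveat: you cite Section \ref{constructingX} for the GIT facts (finitely many closed orbits, $K[X]^G\cong K[X/G]$ separating them) but then describe the invariants for a different ambient space and a different group. The two setups are not interchangeable without comment: for the base-change action you must re-verify finiteness of stabilizers (Radford), finiteness of the number of orbits (Stefan), and hence closedness of orbits on the localized semisimple locus, and you must check that the functions you invert are themselves $G$-invariant so that surjectivity of the restriction $K[W]^G\ra K[X]^G$ survives the localization. All of this goes through, but it belongs to your proof rather than to the section you cite; as written, the two halves of your argument live in different constructions.
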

\begin{remark}
This theorem was originally proved by Datt, Kodiyalam and Sunder (see Theorem 11 in \cite{dks}).
Their proof also relies on geometric invariant theory, but the variety $X$ in their construction
contains all finite dimensional semisimple Hopf algebras of a given dimension,
not only those with a specific algebra and coalgebra structure.
As a result, the group $G$ is also different.
I include my alternative construction and proof here instead of just referring to \cite{dks}
because it will be used in the rest of the paper.
In addition, the group $G$ which appears here 
contains a finite index subgroup $\ti{G}$ whose invariants (which we shall call here the character basic invariants) are interesting in their own right.
\end{remark}  
	
One useful consequence of Theorem \ref{main1} is that it gives us a uniform description of the invariants, which do not depend on the dimension of the Hopf algebra,
or the dimensions of its irreducible representations and co-representations (even though the variety $X$ and the group $G$ do depend on them).
We denote by $K_0$ the subfield of $K$ which is generated by the basic invariants,
and by $Inv^{i,j}$ the $K$ subspace of $H^{i,j}$ which is generated by the $(i,j)$-basic invariants.
An immediate result of Theorem \ref{main1} is the following:
The fact that the orbit of $H$ in $X$ is an open (and closed) subset implies that $H$ can already be defined over $\wb{\Q}$, the algebraic closure of $\Q$
(this follows from the fact that $X$ and $G$ are already defined over $\wb{\Q}$).
The Galois group of $\wb{\Q}/\Q$ acts on the set of all isomorphism types of Hopf algebras defined over $\wb{\Q}$.
Since $H$ is determined by its invariants, we have the following theorem:
\begin{theorem}\label{main2}
The field $K_0$ is contained in $\wb{\Q}$, and we have an equality $K_0 = \wb{\Q}^{stab([H])}$. 
\end{theorem}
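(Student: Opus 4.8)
The plan is to derive everything from Theorem \ref{main1} together with the elementary Galois theory of $\wb{\Q}$. First I would record the inclusion $K_{0}\subseteq\wb{\Q}$. As observed just above, all $G$-orbits in $X$ are closed of the same dimension, so the orbit $O$ of the point $x_{H}$ representing $H$ is open as well as closed, hence a union of connected components of $X$; since $X$ and $G$ — and the fixed algebra and coalgebra structures — are defined over $\Q$, each such component is a nonempty variety over the algebraically closed field $\wb{\Q}$ and therefore carries a $\wb{\Q}$-point. A choice $x_{0}\in O(\wb{\Q})$ yields a Hopf algebra $H_{0}$ over $\wb{\Q}$ with $H_{0}\ot_{\wb{\Q}}K\cong H$. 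Every $(0,0)$-basic invariant is assembled, by operations that are canonical (the integrals $\l$ of $H$ and $\la$ of $H^{*}$, the comultiplication, coordinate permutations, and the evaluation pairing), into an element $b\in K[X]^{G}$; being $G$-invariant it depends only on the isomorphism class, so $b(H)=b(H_{0})\in\wb{\Q}$, and hence $K_{0}=\Q(\{b(H)\})\subseteq\wb{\Q}$.

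Next I would prove the two inclusions giving the claimed equality. The key point is that the formation of basic invariants is equivariant for the natural action of $\mathrm{Gal}(\wb{\Q}/\Q)$ on $\wb{\Q}$-models: writing $H_{0}^{\sigma}$ for the Hopf algebra obtained from $H_{0}$ by applying $\sigma$ to its structure constants, one has $b(H_{0}^{\sigma})=\sigma\bigl(b(H_{0})\bigr)$ for every basic invariant $b$. This needs only that $\sigma$ sends the normalized two-sided integral of $H_{0}$ to that of $H_{0}^{\sigma}$, and similarly for the dual integral — immediate, since these integrals are characterized by the $\sigma$-stable conditions $\ep(\l)=\la(1)=\dim H$ — together with the fact that $\Delta$, the permutations and the pairing have rational structure. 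Granting this: if $\sigma\in stab([H])$, i.e.\ $H_{0}^{\sigma}\cong H_{0}$, then $\sigma(b(H_{0}))=b(H_{0}^{\sigma})=b(H_{0})=b(H)$ for all $b$, so $\sigma$ fixes $K_{0}$ pointwise; this gives $K_{0}\subseteq\wb{\Q}^{stab([H])}$. Conversely, if $\sigma$ fixes $K_{0}$ pointwise then $b(H_{0}^{\sigma})=\sigma(b(H_{0}))=b(H_{0})$ for every $b$, so $H_{0}$ and $H_{0}^{\sigma}$ have the same basic invariants and therefore $H_{0}^{\sigma}\cong H_{0}$ by Theorem \ref{main1}, i.e.\ $\sigma\in stab([H])$; thus $\mathrm{Gal}(\wb{\Q}/K_{0})\subseteq stab([H])$. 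Passing to fixed fields reverses the last inclusion, so $\wb{\Q}^{stab([H])}\subseteq\wb{\Q}^{\mathrm{Gal}(\wb{\Q}/K_{0})}=K_{0}$, the final equality being Artin's theorem for the algebraic closure $\wb{\Q}$ of the characteristic-zero field $K_{0}$; combined with the previous inclusion this yields $K_{0}=\wb{\Q}^{stab([H])}$.

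The only step I expect to require genuine care rather than routine bookkeeping is the passage from Theorem \ref{main1} to the implication ``$H_{0}$ and $H_{0}^{\sigma}$ share all basic invariants $\Rightarrow$ $H_{0}\cong H_{0}^{\sigma}$''. Since the basic invariants span $K[X]^{G}$ only after inverting some invariant $f$, one must check that $f(x_{H})\neq 0$ — equivalently that $x_{H}$, and hence $x_{H}^{\sigma}$, lies in the open locus on which the basic invariants generate the coordinate ring and thus separate the orbits — so that coincidence of all basic invariants really does force coincidence of $G$-orbits. Once this is secured the argument is complete, the entire substance of the statement being inherited from Theorem \ref{main1} with the rest a standard Galois-descent argument.
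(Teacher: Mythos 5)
Your proof is correct and follows essentially the same route as the paper: descend the (open and closed) orbit of $H$ to $\wb{\Q}$ using the rationality of $X$ and $G$ over $\Q$ to get $K_0\subseteq\wb{\Q}$, observe that the basic invariants of $^{\ga}H$ are the $\ga$-twists of those of $H$, and then invoke Theorem \ref{main1} together with standard Galois theory. The only cosmetic difference is how the $\wb{\Q}$-point on the orbit is produced (you use connected components, the paper counts orbits via $\wb{\Q}[X]^G\ot_{\wb{\Q}}K\cong K[X]^G$), and your worry about the localization is harmless since $an$ and $D$ are nonvanishing on all of $X$ by construction.
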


We have a canonical pairing $Inv^{i,j}\ot_{K} Inv^{j,i}\ra K$ arising from the pairing $H^{i,j}\ot_K H^{j,i}\ra K$.
In Section \ref{invsspaces} we will apply Theorem \ref{main1} and prove the following theorem:
\begin{theorem}\label{main3}
The pairing $Inv^{i,j}\ot_{K}Inv^{j,i}\ra K$ is non-degenerate. 
\end{theorem}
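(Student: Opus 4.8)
The plan is to prove non-degeneracy by extending scalars to $\cd$ and recognising the pairing as the (positive definite) sesquilinear pairing attached to the canonical $C^*$-structure of $H$; since a positive definite Hermitian form restricts to a positive definite — in particular non-degenerate — form on every subspace, this will immediately give the theorem. First I would reduce to $K=\cd$. By Theorem \ref{main2} the isomorphism type $[H]$, and hence $H$ itself since its orbit in $X$ is closed, is already defined over $\wb\Q$; the integrals $\l,\la$ normalised by $\ep(\l)=\la(1)=\dim H$ are then defined over $\wb\Q$ too, and therefore so are all $(i,j)$-basic invariants and the pairings between them. As the non-degeneracy of a bilinear pairing between finite dimensional vector spaces is unchanged under field extension, I may fix an embedding $\wb\Q\hookrightarrow\cd$ and assume $K=\cd$.

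Over $\cd$ I would invoke the fact that a finite dimensional semisimple Hopf algebra carries a canonical Hopf $*$-structure making it a $C^*$-Hopf algebra: the normalised integrals of $H$ and of $H^*$ are positive, $\l$ and $\la$ are self-adjoint, the comultiplications of $H$ and $H^*$ are $*$-homomorphisms, and the duality pairing $H\ot H^*\ra\cd$ is compatible with the $*$-operations up to the antipode. Consequently $(x,y)\mapsto\la(y^*x)$ is a positive definite Hermitian form on $H$, the analogous formula with $\l$ gives one on $H^*$, and tensoring them yields a positive definite Hermitian form $(\,\cdot\,,\,\cdot\,)$ on each $H^{i,j}$. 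Moreover the canonical bilinear pairing $\beta\colon H^{i,j}\ot_\cd H^{j,i}\ra\cd$ is recovered from this as $\beta(v,w)=(v,Jw)$ for a bijective conjugate-linear map $J\colon H^{j,i}\ra H^{i,j}$, which is assembled slot by slot from the $*$-operations on $H$ and $H^*$ together with the canonical linear isomorphisms $H\cong H^*$ determined by the Frobenius functionals $\la$ and $\l$ (these are isomorphisms because $H$ and $H^*$ are semisimple, hence Frobenius, algebras).

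The crucial point — and the place where Theorem \ref{main1} enters — is that $J$ preserves the spaces of basic invariants. The linear isomorphism $H\ra H^*$ built from $\la$ is the one represented by the basic invariant $\Delta_{H^*}(\la)\in Inv^{0,2}$, the one built from $\l$ is (up to the antipode and the scalar $\dim H$) represented by $\Delta_H(\l)\in Inv^{2,0}$, and the antipode itself, regarded as an element of $Inv^{1,1}$, is a contraction of $\Delta_H(\l)\ot\Delta_{H^*}(\la)$; thus applying any of these maps, or $S$, to a tensor factor carries basic invariants to basic invariants. The $*$-operation, applied to tensor factors, does the same: it fixes $\l$ and $\la$, commutes with the comultiplications and with permutations of the factors, and converts a contraction into a contraction at the cost of an insertion of $S$. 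Hence $J$ restricts to a bijection $Inv^{j,i}\xrightarrow{\sim}Inv^{i,j}$. The theorem now follows at once: given $0\neq v\in Inv^{i,j}$, the element $w:=J^{-1}v$ lies in $Inv^{j,i}$ and $\beta(v,w)=(v,v)>0$ by positive definiteness, so $v$ is not in the left radical of the pairing; the symmetric argument with $i$ and $j$ interchanged rules out a right radical.

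The hard part will be this last verification: exhibiting the canonical isomorphisms $H\cong H^*$ and the antipode as basic invariants (or short combinations of them), and checking that the $*$-operation is compatible with the operations — repeated comultiplication, permutation, pairing — that generate basic invariants. In effect one must translate the analytic statement ``$H$ is a $C^*$-Hopf algebra'' into a statement purely internal to the combinatorics of basic invariants, and it is precisely the dimension-independent, structure-map description of the invariants coming out of the proof of Theorem \ref{main1} (together with $S(\l)=\l$ and $S^*(\la)=\la$) that makes this translation available. A subsidiary point deserving care is Step 1: one must check that the integrals, and hence every basic invariant and every value of the pairing, really are defined over the field of definition of $H$ provided by Theorem \ref{main2}.
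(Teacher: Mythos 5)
There is a genuine gap, and it sits exactly where you locate ``the hard part'': your argument stands or falls on the assertion that every finite dimensional semisimple Hopf algebra over $\cd$ carries a compatible $C^*$-Hopf (Kac) structure, i.e.\ a positive antilinear involution $*$ that is antimultiplicative \emph{and} for which $\Delta$ is a $*$-homomorphism. This is not a known fact one can ``invoke''; it is a well-known open problem (equivalent, essentially, to the unitarizability of $Rep(H)$ together with the fiber functor). What is available for free is only half of what you need: since $H\cong\oplus_i End(W_i)$ as an algebra, the standard conjugate-transpose involution makes $(x,y)\mapsto\la(y^*x)$ positive definite, but for such an arbitrary algebra involution there is no reason that $*$ commutes with $\Delta$ or fixes $\l$, and then $J$ has no reason to carry $Inv^{j,i}$ into $Inv^{i,j}$ --- which is the whole point. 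The purely algebraic substitute, replacing $*$ by the antipode and using the non-degenerate Frobenius form $\la(S(y)x)$, does not rescue the argument: a merely non-degenerate form can restrict degenerately to a subspace, and positive definiteness is precisely the property you cannot certify without the Kac structure. (A secondary point you would also have to address, even granting the $*$-structure, is surjectivity: you need $J^{-1}(Inv^{i,j})\subseteq Inv^{j,i}$, not just $J(Inv^{j,i})\subseteq Inv^{i,j}$; this is fixable by noting $J^{-1}$ is built from the same ingredients, but it is not automatic from injectivity.)

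The paper's proof avoids any analytic input and runs entirely through the geometric invariant theory already set up. Given $x\in Inv^{j,i}$ orthogonal to $Inv^{i,j}$, the conditions ``$\langle y,x\rangle=0$ for all $y\in H^{i,j}$'' generate an ideal $J\triangleleft K[X]$ which is shown to be $G$-stable (because $x$ itself is cut out from a $G$-invariant tensor $t$ by evaluating against $T^{\ot n}$). Since the orbit of $T$ is closed and $G$ is reductive, Theorem \ref{GIT} reduces the vanishing of $J$ on the orbit to the vanishing of $J^G$ at $T$; Lemma \ref{replacement} identifies every element of $J^G$, evaluated at $T$, with a pairing $\langle\ti{W},x\rangle$ for some $\ti{W}\in Inv^{i,j}$, which vanishes by hypothesis. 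Hence $J$ vanishes on the orbit and $x=0$. Your reduction to $\wb\Q$ and the slot-by-slot bookkeeping of which operations preserve the spaces $Inv^{i,j}$ are sound and in the spirit of the paper, but to complete your route you would first have to prove the Kac-structure statement, which is a much harder theorem than the one at hand.
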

We will then construct, in Section \ref{Tannaka}, a symmetric monoidal category out of these spaces.
By applying Tannaka Reconstruction Theorem we will prove the following result:
\begin{theorem}\label{main4}
The subspace $Inv^{i,j}$ of $H^{i,j}$ is equal to $(H^{i,j})^{Aut_{Hopf}(H)}$.
\end{theorem}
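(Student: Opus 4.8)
The plan is to realise the spaces $Inv^{i,j}$ as the morphism spaces of a rigid symmetric monoidal category, to reconstruct an affine group scheme $G'$ from it by Tannakian duality, and then to identify $G'$ with $A:=Aut_{Hopf}(H)$. One inclusion, $Inv^{i,j}\subseteq (H^{i,j})^{A}$, is immediate: an $(i,j)$-basic invariant is built from $\l$ and $\la$ by comultiplying (in $H$ and, dually, in $H^{*}$), permuting and pairing, so the only ingredients are $\l$, $\la$, the multiplication and the comultiplication of $H$; each of these is preserved by $A$ --- $\l$ because it is the unique two-sided integral with $\ep(\l)=\dim H$, and likewise $\la$ --- so $A$ fixes every basic invariant.

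For the reverse inclusion I would build the $K$-linear symmetric monoidal category $\mathcal{C}$ whose objects are the formal tensor words in a symbol $\mathbf h$ and its dual $\mathbf h^{*}$, with $\mathrm{Hom}_{\mathcal{C}}(\mathbf 1,\mathbf h^{\ot i}\ot(\mathbf h^{*})^{\ot j}):=Inv^{i,j}$ (the other Hom-spaces obtained by transposing factors across via duality). The explicit description of the basic invariants shows that $Inv$ is closed under composition --- a composite of two basic invariants is again obtained from $\l$'s and $\la$'s by comultiplications, permutations and pairings --- and under tensor product and the symmetry; the same analysis places $\mathrm{id}_{H}$, the evaluation and coevaluation, the multiplication and the comultiplication of $H$ in the relevant $Inv^{i,j}$ (one can produce $\mathrm{id}_H$ from the integrals and the antipode, using $S^{2}=\mathrm{id}$ for semisimple $H$). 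Hence $\mathcal{C}$ is a genuine category with $\mathrm{End}_{\mathcal{C}}(\mathbf 1)=Inv^{0,0}=K$, and --- crucially --- it is \emph{rigid}: this is exactly Theorem \ref{main3}, the non-degeneracy of the pairings $Inv^{i,j}\ot_K Inv^{j,i}\ra K$ being what makes $\mathbf h^{*}$ an honest dual of $\mathbf h$. Equipping $\mathcal{C}$ with the tautological fiber functor $\omega$ ($\mathbf h\mapsto H$, the inclusions $Inv^{i,j}\hookrightarrow H^{i,j}$ on morphisms) and passing, if needed, to its abelian hull, Tannaka reconstruction yields an affine group scheme $G':=\underline{Aut}^{\ot}(\omega)$ for which $\omega$ identifies $\mathrm{Hom}_{\mathcal{C}}(\mathbf 1,Z)$ with the $G'$-invariants of $\omega(Z)$; in particular $Inv^{i,j}=(H^{i,j})^{G'}$ for all $i,j$.

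It remains to prove $G'=A$. For $A\subseteq G'$: a Hopf automorphism $\phi$ of $H$ defines a tensor automorphism $\eta_{\phi}$ of $\omega$ with $\eta_{\phi}(\mathbf h)=\phi$ (its value on $\mathbf h^{*}$ being forced by rigidity); since $\phi$ preserves $\l$, $\la$, the multiplication and the comultiplication, and the symmetry and (co)evaluation are structural, $\eta_{\phi}$ commutes with $\omega(f)$ for every morphism $f$ of $\mathcal{C}$ (these operations generate all morphisms), so $\eta_{\phi}\in G'(K)$, and $\phi\mapsto\eta_{\phi}$ is injective because $\phi$ is recovered from $\eta_{\phi}(\mathbf h)$. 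For $G'\subseteq A$: an element of $G'(K)$ acts on $\omega(\mathbf h)=H$ by some $\psi\in GL(H)$ which, commuting with $\omega$ of the multiplication, comultiplication and integral $\l$, is an algebra and coalgebra automorphism fixing $\l$; being an algebra automorphism it fixes $1_{H}$, and from $h\l=\ep(h)\l$ one deduces $\ep\circ\psi=\ep$, so $\psi$ is a bialgebra --- hence a Hopf --- automorphism, i.e. $\psi\in A$. Therefore $G'=A$, and $Inv^{i,j}=(H^{i,j})^{G'}=(H^{i,j})^{A}$, which is the theorem.

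The part I expect to require the most care is the middle one: turning Theorem \ref{main3} into genuine rigidity of $\mathcal{C}$ (honest duals, not merely a perfect pairing), verifying that the structural maps of $H$ really do lie in the spaces $Inv^{i,j}$ (which is what pins $G'$ down inside the Hopf automorphisms), and placing $\mathcal{C}$ in a categorical setting to which the reconstruction theorem literally applies (forming an abelian hull). Once these are in hand, the construction of $\mathcal{C}$ and the identification $G'=A$ are essentially formal.
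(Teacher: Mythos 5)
Your proposal follows essentially the same route as the paper: build a rigid symmetric monoidal category with Hom-spaces given by the $Inv^{i,j}$, pass to the additive and Karoubian envelopes to get an abelian category, apply Tannaka reconstruction, and identify the reconstructed group with $Aut_{Hopf}(H)$ via the fact that the (co)multiplication and integrals lie in the invariant spaces. One small correction of emphasis: Theorem \ref{main3} is not what makes $\mathbf h^{*}$ a dual (rigidity already follows from $Id_H\in Inv^{1,1}$ and closure under evaluation); its real role is in the step you rightly flag as delicate, namely showing that the trace pairing on each endomorphism algebra $End_{\C}(C)\cong Inv^{i+j,i+j}$ is non-degenerate, hence that these algebras are semisimple and the Karoubian envelope is abelian, which is what makes the reconstruction theorem applicable and yields the reverse inclusion.
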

It is quite easy to prove inclusion in one direction, namely that $Inv^{i,j}\subseteq (H^{i,j})^{Aut_{Hopf}(H)}$.
The inclusion in the other direction is less clear, and its proof relies heavily on the rigid nature of finite dimensional semisimple Hopf algebras.
Notice also that Theorem \ref{main3} follows easily from Theorem \ref{main4} and Maschke's Theorem.
Proving that Theorem \ref{main3} implies Theorem \ref{main4} will be harder, 
and will require us to use results on Symmetric monoidal categories.

As a generalization of the basic $(i,j)$-invariants we construct $(i,j)$-character basic invariants.
The $(i,j)$-character basic invariants are constructed in the same way as the
$(i,j)$-basic invariants, with the difference that instead of using a tensor product of $\l$ and $\la$,
we are allowed to use tensor products of arbitrary characters of $H^*$ and of $H$.
We denote by $\ti{Inv}^{i,j}\subseteq H^{i,j}$ the subspace spanned by $(i,j)$-character basic invariants.
Theorems \ref{main3} and \ref{main4} can be easily generalized in the following way:
\begin{theorem}\label{main4.5}
The pairing $\ti{Inv^{i,j}}\ot \ti{Inv^{j,i}}\ra K$ is non-degenerate,
and we have $\ti{Inv^{i,j}}= (H^{i,j})^{Aut_{Hopf}^0(H)}$
where $Aut_{Hopf}^0(H)\subseteq Aut_{Hopf}(H)$ is the subgroup of all Hopf automorphisms which fix all the characters 
and cocharacters of $H$.
\end{theorem}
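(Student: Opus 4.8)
The plan is to carry out, for the finite-index subgroup $\ti{G}\subseteq G$ and the character basic invariants, the same chain of arguments --- $\ti{G}$-analogue of Theorem \ref{main1} $\Rightarrow$ non-degeneracy $\Rightarrow$ Tannakian description --- that yields Theorems \ref{main1}, \ref{main3} and \ref{main4} for $G$ and the basic invariants. Write $[H]\in X$ for the point corresponding to $H$. The first step is the $\ti{G}$-analogue of Theorem \ref{main1}: after a finite localization the character basic invariants span $K[X]^{\ti{G}}$, so that $K[X]^{\ti{G}}\cong K[X/\ti{G}]$ and these invariants separate the $\ti{G}$-orbits. This is exactly what the method of Section \ref{tiG-invariants} gives, since a character of $H$ (respectively of $H^*$) is, on each matrix block, the trace of the corresponding matrix of coordinate functions on $X$, so Procesi's description of invariants of tuples of matrices applies here just as for the basic invariants; and the passage to GIT is legitimate because $\ti{G}$ has finite index in $G$, so that --- $G$ having only finitely many orbits in $X$, all closed and of the same dimension --- the same holds for $\ti{G}$.

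Granting this, the non-degeneracy of $\ti{Inv}^{i,j}\ot_K\ti{Inv}^{j,i}\ra K$ follows from the argument of Section \ref{invsspaces} proving Theorem \ref{main3}, applied verbatim with ``basic invariant'' replaced by ``character basic invariant'' and $G$ by $\ti{G}$: the canonical perfect pairing identifies $H^{j,i}$ with the dual of $H^{i,j}$, contracting an $(i,j)$-character basic invariant against a $(j,i)$-one produces again a character basic invariant --- that is, an element of the localized $K[X]^{\ti{G}}$ --- and the spanning statement of the previous paragraph then forces this pairing to be non-degenerate at $[H]$.

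For the description $\ti{Inv}^{i,j}=(H^{i,j})^{Aut_{Hopf}^0(H)}$ I would repeat the Tannakian reconstruction of Section \ref{Tannaka}. Using the non-degeneracy just obtained, the spaces $\ti{Inv}^{i,j}$ --- with tensor product, composition via the pairings, the symmetries from $S_{m+i}\times S_{m+j}$ and duality --- assemble into a rigid symmetric monoidal category $\ti{\C}$ with one generating object $V$ (write $V^{i,j}:=V^{\ot i}\ot(V^*)^{\ot j}$), equipped with a faithful $K$-linear tensor functor $F$ to $\mathrm{Vec}_K$ with $F(V)=H$ and $\mathrm{Hom}_{\ti{\C}}(\one,V^{i,j})$ identified with $\ti{Inv}^{i,j}\subseteq H^{i,j}$. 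The reconstruction theorem (in the form used for Theorem \ref{main4}) gives $\ti{\C}\simeq\mathrm{Rep}(\ti{A})$ with $\ti{A}=\mathrm{Aut}^{\ot}(F)$, whence $\ti{Inv}^{i,j}=\mathrm{Hom}_{\ti{\C}}(\one,V^{i,j})=(H^{i,j})^{\ti{A}}$; so everything reduces to the identification $\ti{A}=Aut_{Hopf}^0(H)$. The inclusion $Aut_{Hopf}^0(H)\subseteq\ti{A}$ is the easy direction: an $(i,j)$-character basic invariant is built from the (co)characters of $H$, from $m,\Delta,u,\ep,S$, and from permutations and pairings, all of which a Hopf automorphism respects, so a Hopf automorphism that additionally fixes every character and cocharacter fixes every $(i,j)$-character basic invariant and defines a tensor automorphism of $F$. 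For the reverse inclusion, observe that by Theorem \ref{main4} the structure maps satisfy $m\in Inv^{1,2}\subseteq\ti{Inv}^{1,2}$, $\Delta\in\ti{Inv}^{2,1}$, $u\in\ti{Inv}^{1,0}$, $\ep\in\ti{Inv}^{0,1}$, $S\in\ti{Inv}^{1,1}$ (being $Aut_{Hopf}(H)$-invariant), while every character of $H$ lies in $\ti{Inv}^{0,1}$ and every cocharacter in $\ti{Inv}^{1,0}$ by construction. An element of $\ti{A}$ is a monoidal family of automorphisms of the $H^{i,j}$; naturality against the morphisms of $\ti{\C}$ corresponding to $m,\Delta,u,\ep$ forces its component $\phi$ on $H=F(V)$ to be a Hopf automorphism, and naturality against the morphisms $\one\to V$ and $\one\to V^*$ coming from the cocharacters and characters forces $\phi$ to fix all of them; hence $\phi\in Aut_{Hopf}^0(H)$, and $\ti{A}=Aut_{Hopf}^0(H)$.

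The main obstacle is this last part --- the identification $\ti{A}=Aut_{Hopf}^0(H)$ --- and within it the step, inherited from Section \ref{Tannaka}, that $\ti{\C}$ is genuinely Tannakian, i.e.\ that the abstractly constructed symmetric monoidal category is equivalent to $\mathrm{Rep}(\ti{A})$ and not to something strictly larger; this is where the rigidity of finite-dimensional semisimple Hopf algebras and the control of fiber functors from Section \ref{Tannaka} are needed. What is genuinely new for Theorem \ref{main4.5}, beyond rerunning those arguments, is that $\ti{Inv}^{0,1}$ and $\ti{Inv}^{1,0}$ contain \emph{all} characters and cocharacters of $H$, which is exactly what cuts the reconstructed group down from $Aut_{Hopf}(H)$ to $Aut_{Hopf}^0(H)$; everything else is the $\ti{G}$-version of what was already done for $G$.
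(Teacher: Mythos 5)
Your proposal is correct and follows essentially the same route as the paper: the non-degeneracy is obtained by running the proof of Theorem \ref{main3} verbatim with $G$ replaced by $\ti{G}$ and basic invariants by character basic invariants, and the identification $\ti{Inv}^{i,j}=(H^{i,j})^{Aut^0_{Hopf}(H)}$ is obtained by repeating the Tannakian construction of Section \ref{Tannaka} with the larger morphism spaces $\ti{Inv}^{i,j}$, the presence of all characters in $\ti{Inv}^{0,1}$ and all cocharacters in $\ti{Inv}^{1,0}$ being exactly what cuts the reconstructed group down to $Aut^0_{Hopf}(H)$. The paper states this only in outline, and your write-up supplies the same details it implicitly relies on.
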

The $(0,0)$-character basic invariants form a set of scalars.
Since there are more character basic invariants then basic invariants, they will determine a more specific structure.
In Section \ref{tiG-invariants} we will prove the following theorem:
\begin{theorem}\label{tildeGinv} The $(0,0)$-character basic invariants determine the isomorphism type of the ordered tuple $(H,W_1,\ldots, W_c,V_1,\ldots,V_d)$
 where $H$ is a finite dimensional semisimple Hopf algebra, $W_i$ are the irreducible representations of $H$ and $V_j$ are the irreducible representation of $H^*$. 
\end{theorem}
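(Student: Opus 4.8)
The plan is to run the argument behind Theorems \ref{main1} and \ref{main2} with the group $G$ replaced throughout by its finite-index subgroup $\ti{G}$. Recall from Section \ref{constructingX} that a point $x\in X$ equips the fixed underlying algebra with a Hopf structure $H_x$; moreover the fixed block decomposition of this algebra singles out an ordered list $W_1,\dots,W_c$ of simple $H_x$-modules, and the fixed block decomposition on the dual side singles out an ordered list $V_1,\dots,V_d$ of simple $H_x^*$-modules. The finite quotient $G/\ti{G}$ acts by permuting, on each side, the simple blocks of equal dimension, and $\ti{G}$ is the subgroup that fixes all of these blocks. Since $[G:\ti{G}]<\infty$, every $G$-orbit in $X$ is a finite union of $\ti{G}$-orbits of the same dimension; as the $G$-orbits are closed of constant dimension --- by the finiteness results of Stefan and Radford recalled in Section \ref{constructingX} --- the $\ti{G}$-orbits are again closed of constant dimension. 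Geometric invariant theory therefore applies to the $\ti{G}$-action, giving $K[X]^{\ti{G}}\cong K[X/\ti{G}]$, so that the $\ti{G}$-invariant regular functions separate the (pairwise disjoint, closed) $\ti{G}$-orbits.

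The second step is the dictionary between $\ti{G}$-orbits and the tuples in the statement. If $g\in\ti{G}$ carries $x$ to $x'$, then by construction $g$ induces a Hopf isomorphism $H_x\to H_{x'}$ taking $W_i$ to $W_i'$ and $V_j$ to $V_j'$ for all $i,j$, so the $\ti{G}$-orbit of $x$ determines the isomorphism class of $(H_x,W_1,\dots,W_c,V_1,\dots,V_d)$. Conversely, suppose this tuple is isomorphic to $(H_{x'},W_1',\dots,W_c',V_1',\dots,V_d')$. The underlying isomorphism is in particular an algebra automorphism of the fixed underlying algebra of $X$ that fixes the isomorphism class of each simple module; by Skolem--Noether applied block by block --- the labelings ruling out any permutation of blocks --- it is the conjugation action of a unit, hence is given by an element of $\ti{G}$, and the same reasoning on the dual side shows the whole isomorphism of tuples is realized inside $\ti{G}$. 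Hence the $\ti{G}$-orbit of $x$ is precisely the isomorphism class of the labeled tuple.

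The third step identifies $K[X]^{\ti{G}}$ with the span of the $(0,0)$-character basic invariants. This is the $\ti{G}$-analogue of Theorem \ref{main1}, proved by the same method (indeed in parallel with it): since $\ti{G}$ is assembled from the general linear groups of the simple blocks of $H$ and of $H^*$, the first fundamental theorem of invariant theory --- in the form used in \cite{Procesi} --- shows that $K[X]^{\ti{G}}$ is spanned, after a finite localization, by the complete contractions of the structure tensors of $X$, namely the comultiplications of $H$ and of $H^*$ together with the integrals $\l$ and $\la$, now used one simple character of $H$ or of $H^*$ at a time rather than assembled into the regular characters $\la=\sum_i (\dim W_i)\chi_{W_i}$ and $\l=\sum_j(\dim V_j)\chi_{V_j}$. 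These contractions are exactly the $(0,0)$-character basic invariants. Combining the three steps, the $(0,0)$-character basic invariants span $K[X]^{\ti{G}}$ on a $\ti{G}$-stable neighbourhood of the orbit of $x$, hence their values separate this orbit from the others, and by the second step this means they determine the isomorphism type of $(H,W_1,\dots,W_c,V_1,\dots,V_d)$.

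The conceptual heart of the argument is the dictionary between $\ti{G}$-orbits and tuples established in the second step: one must verify that $\ti{G}$ is ``large enough'', i.e.\ that every abstract isomorphism of labeled tuples is induced by an element of $\ti{G}$ --- this is where the rigidity of semisimple algebras (Skolem--Noether) is used, together with the observation that remembering the simple modules kills exactly the finite block-permutation group $G/\ti{G}$. The technical heart is the Procesi-type spanning statement for $\ti{G}$; it is, however, a routine adaptation of the proof of Theorem \ref{main1}, the only change being that no averaging over $G/\ti{G}$ is required --- which is precisely why the finer character basic invariants, rather than the basic invariants, form the natural generating set here.
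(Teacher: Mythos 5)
Your proposal is correct and follows essentially the same route as the paper: the identification of $\ti{G}$-orbits with isomorphism classes of labeled tuples (the paper's Corollary \ref{orbitstilde}, proved via Skolem--Noether exactly as you do), closedness of orbits from finiteness of stabilizers so that Theorem \ref{GIT} applies, and the Procesi-type description of $K[X]^{\ti{G}}$ (after localizing at $an$ and $D$) as the span of the $(0,0)$-character basic invariants. The only cosmetic difference is that you deduce closedness of $\ti{G}$-orbits from the finite index $[G:\ti{G}]$, whereas the paper gets it directly from finiteness of the stabilizers; both are immediate.
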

The $(1,0)$ and $(0,1)$-character basic invariants appeared in \cite{CM1} and \cite{CM2}
in the study of orders of Hopf algebras by means of their character theory.
We have the following corollary of Theorem \ref{main4.5}, which we shall prove in Section \ref{finiteorders}
\begin{theorem}\label{main5}
Let $L\subseteq K$ be a number field, and let $H$ be a semisimple finite dimensional Hopf algebra over $L$.
Then $H$ has at most finitely many Hopf orders over $\Ow_L$.
\end{theorem}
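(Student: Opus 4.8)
The plan is to produce, out of the character basic invariants, a fixed full $\Ow_L$-lattice inside every Hopf order of $H$ and a fixed full lattice containing every Hopf order of $H$; since only finitely many lattices lie between two fixed ones, this forces the set of Hopf orders to be finite. Throughout, for a Hopf order $X\subseteq H$ write $X^* = \mathrm{Hom}_{\Ow_L}(X,\Ow_L)\subseteq H^*$ for the dual Hopf order, and recall that $X\mapsto X^*$ is an inclusion-reversing bijection between Hopf orders of $H$ and Hopf orders of $H^*$. More generally write $X^{i,j} = X^{\ot i}\ot_{\Ow_L}(X^*)^{\ot j}$, an $\Ow_L$-lattice in $H^{i,j}$.

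The first step is an integrality lemma: for any Hopf order $X$, every character of $H$ lies in $X^*$, every character of $H^*$ lies in $X$, $\l\in X$, and $\la\in X^*$. Indeed, let $V$ be a finite dimensional $H$-module. Since $X$ is finitely generated as an $\Ow_L$-module, any $\Ow_L$-lattice in $V$ generates a full $X$-stable $\Ow_L$-lattice $V_0$; as $V_0$ is $\Ow_L$-projective, the trace of any $x\in X$ acting on it — which equals $\chi_V(x)$ — lies in $\Ow_L$, so $\chi_V\in X^*$. Applying this to the regular representation of $H$ gives $\la\in X^*$; applying the dual statement to $H^*$ and the Hopf order $X^*$ gives that characters of $H^*$ lie in $X$ and that $\l\in X$.

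Next, recall that an $(i,j)$-character basic invariant is built from a tensor product of characters of $H$ and of $H^*$ (together with copies of $\l$ and $\la$) by repeatedly applying comultiplications, permutations, and evaluations pairing an $H$-tensorand against an $H^*$-tensorand. Each of these operations carries the relevant lattices into one another: $\Delta$ maps $X$ into $X\ot_{\Ow_L}X$ and $X^*$ into $X^*\ot_{\Ow_L}X^*$, permutations preserve tensor powers of lattices, and evaluation sends $X\ot_{\Ow_L}X^*$ into $\Ow_L$. Combined with the integrality lemma, this shows that every $(i,j)$-character basic invariant lies in $X^{i,j}$. Hence, if $M^{i,j}\subseteq H^{i,j}$ denotes the $\Ow_L$-span of all $(i,j)$-character basic invariants, then $M^{i,j}\subseteq X^{i,j}$ for every Hopf order $X$. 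Assuming $H$ has at least one Hopf order (otherwise there is nothing to prove), $M^{i,j}$ is a submodule of the finitely generated module $X^{i,j}$, hence finitely generated, and by Theorem~\ref{main4.5} it is a full $\Ow_L$-lattice in $\ti{Inv}^{i,j} = (H^{i,j})^{Aut_{Hopf}^0(H)}$.

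It remains to conclude that $X$ can take only finitely many values. When $Aut_{Hopf}^0(H) = 1$ this is already clean: then $\ti{Inv}^{1,1} = H\ot H^* = \mathrm{End}_K(H)$, so $M^{1,1}$ — which is closed under composition of endomorphisms, composition of two $(1,1)$-character basic invariants being again such an invariant — generates a full $\Ow_L$-order inside $\mathrm{End}_K(H)$ contained in the maximal order $\mathrm{End}_{\Ow_L}(X)$; only finitely many maximal orders contain a given full order, and each such maximal order, together with the requirement that $X$ be an $\Ow_L$-subalgebra of $H$ containing $1$, pins $X$ down to finitely many possibilities. This already covers group algebras, function algebras, and more generally any $H$ all of whose Hopf automorphisms are detected by the characters or the cocharacters. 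In general, one feeds the full family of inclusions $M^{i,j}\subseteq X^{i,j}$ into the integral form of the Tannakian reconstruction of Section~\ref{Tannaka}: the lattices $\{M^{i,j}\}$ assemble into a rigid $\Ow_L$-linear symmetric monoidal category, and reconstructing from it recovers $X$ up to the action of the finite group $Aut_{Hopf}(H)$ (finite by Radford) on the set of Hopf orders, whence finiteness follows. The heart of the argument — and the step I expect to be the main obstacle — is precisely this last point: controlling the gap between the subalgebra $\ti{Inv}^{1,1}=\mathrm{End}_{Aut_{Hopf}^0(H)}(H)$ that the invariants see and the full algebra $\mathrm{End}_K(H)$ when $Aut_{Hopf}^0(H)\neq 1$, for which the rigidity of finite dimensional semisimple Hopf algebras exploited in the proof of Theorem~\ref{main4} is essential.
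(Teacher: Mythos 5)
Your opening steps coincide with the paper's: the integrality lemma (characters of $H$ lie in $X^{\star}$, characters of $H^*$ in $X$, hence $\l\in X$, $\la\in X^{\star}$) and the observation that every basic invariant therefore lands in $X^{i,j}$ are exactly the paper's starting point. But the proof has a genuine gap at the place you yourself flag. What you establish is only a \emph{lower} bound $M^{i,j}\subseteq X^{i,j}$; finiteness requires a single lattice bounding every Hopf order from \emph{above}, and your mechanism for producing one --- a full order of $\ti{Inv}^{1,1}$ sitting inside the maximal order $\mathrm{End}_{\Ow_L}(X)$ --- only works when $\ti{Inv}^{1,1}=\mathrm{End}_K(H)$, i.e.\ when $Aut^0_{Hopf}(H)=1$. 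The paper explicitly constructs Hopf algebras with $Aut^0_{Hopf}(H)\neq 1$ (the remark following the proof), and for those $\ti{Inv}^{1,1}=\mathrm{End}_{Aut^0_{Hopf}(H)}(H)$ is a proper subalgebra: containing a full order of it puts no constraint on $X$ in the directions that subalgebra does not see, so $X$ could a priori be rescaled there arbitrarily. The appeal to ``integral Tannakian reconstruction recovering $X$ up to a finite group'' is not an argument but a restatement of the theorem: the whole question is whether the integral data admits only finitely many integral forms. (A secondary issue: over $L$ the character basic invariants need not be defined unless all representations of $H$ and $H^*$ are realizable over $L$; the paper works with the plain basic invariants, built only from $\l$ and $\la$, precisely to avoid this.)

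The idea you are missing is the paper's passage to the \emph{commutative} coordinate ring $C=L[H]=S^{\bullet}(H^*)$. Since $A=Aut_{Hopf}(H)$ is finite (Radford), $C$ is integral over $C^A$ via $\prod_{a\in A}(t-a(c))$, and by Theorem \ref{main4} one has $Inv^{0,n}=((H^*)^{\ot n})^A$, so the $\Ow_L$-subalgebra $D$ generated by the images of the basic invariants in $Inv^{0,n}$ exhausts $C^A$ up to integer multiples. Hence each coordinate function $h^i$, after rescaling by an integer, satisfies a monic equation with coefficients in $D$. Evaluating that equation at an arbitrary $x\in R$ and using your own integrality lemma (basic invariants take values in $\Ow_L$ on $R$) shows every coordinate of $x$ is integral over $\Ow_L$, i.e.\ $R\subseteq\bigoplus_i\Ow_L h_i=:M$; dualizing gives $N^{\star}\subseteq R\subseteq M$ with $M/N^{\star}$ finite, which is your ``sandwich between two lattices'' conclusion. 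This averaging-and-integrality step over the finite automorphism group is exactly how the paper bridges the gap between the invariant subalgebra and all of $H$, and it is absent from your proposal.
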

This finiteness result is relatively easy to prove by the methods of \cite{CM1} and \cite{CM2} in case the Hopf algebra $H$ is a group algebra.
It follows from the fact that in this case $Aut^0_{Hopf}(H)=1$. 
Here we prove that it is in fact true for any finite dimensional semisimple Hopf algebra.

Finally, in Sections \ref{ex1} and \ref{ex2} we will give some concrete examples of these invariants.
We will show that in case $H=KG$ is a group algebra (where $G$ is any finite group),
then all the basic invariants are of the form $|G|^{a}\# Hom_{Grp}(P,G)$ for some finitely generated group $P$,
where $a$ is the number of relations in some finite presentations of $P$ (see also \cite{dks}).
We will also show that for a general Hopf algebra some of the specific basic invariants are well known,
for example the Frobenius-Schur indicators and the Reshetikhin-Turaev invariants of three dimensional manifolds.
By studying some specific invariants we will prove in Section \ref{ex2} the following result, 
which relates the invariants to Kaplansky's sixth conjecture:
\begin{theorem}\label{main6}
If all the basic invariants of $H$ are algebraic integers, then $H$ satisfies Kaplansky's Sixth Conjecture: 
the dimensions of every irreducible representation of $H$ and of $H^*$ divide the dimension of $H$.
\end{theorem}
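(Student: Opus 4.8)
The plan is to convert the divisibility statement into an integrality statement about the ``codegrees'' of $H$ and $H^*$, and then to extract those algebraic integers from a well-chosen family of basic invariants. Write $q_i:=\dim(H)/\dim(W_i)$ and $r_j:=\dim(H)/\dim(V_j)$. Since $\dim(H)=\sum_i\dim(W_i)^2=\sum_j\dim(V_j)^2$ and all the dimensions involved are positive integers, each $q_i$ and each $r_j$ is a positive rational number, and the theorem asserts precisely that $q_i\in\Z$ for all $i$ and $r_j\in\Z$ for all $j$. First I would reduce the $H^*$-statement to the $H$-statement. A $(0,0)$-basic invariant is a fully contracted tensor network whose nodes are iterated comultiplications of $\l$ (trees of legs in $H$, built with $\Delta_H$) or of $\la$ (trees of legs in $H^*$, built with $\Delta_{H^*}$), the legs being contracted in pairs by the evaluation $H\ot H^*\ra K$. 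Under the identification $(H^*)^*\cong H$ the integrals of $H^*$ and of $(H^*)^*$ are $\la$ and $\l$, and $\Delta_{(H^*)^*}=\Delta_H$; hence the set of $(0,0)$-basic invariants of $H^*$ coincides, as a set of scalars, with that of $H$. So the hypothesis of the theorem passes to $H^*$, and it suffices to prove $q_i\in\Z$ for all $i$; applying that to $H^*$ in place of $H$ then gives $r_j\in\Z$ for all $j$.

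Next, for each $h\ge1$ I would write down a basic invariant $J_h$ (more precisely, an element of the $\Z$-algebra generated by the basic invariants), namely the Hopf-algebraic transcription of the quantity $\#\mathrm{Hom}(\pi_1(\Sigma_h),G)$ attached to the genus-$h$ surface group $\pi_1(\Sigma_h)=\langle a_1,b_1,\ldots,a_h,b_h\mid[a_1,b_1]\cdots[a_h,b_h]\rangle$: take $2h$ copies of $\l$, apply $\Delta_H$ once to each so as to produce one pair of legs in $H$ per generator (each generator occurs twice in the relator, once directly and once through the antipode, which can itself be written through $\l$ and $\la$), arrange the $4h$ legs cyclically according to the relator, and contract them against a single copy of $\la$ comultiplied $4h-1$ times. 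As explained in Section~\ref{ex1}, where all basic invariants of a group algebra are identified with multiples $|G|^{a}\#\mathrm{Hom}(P,G)$ (with $a$ the number of relations in a presentation of $P$), this construction specializes for $H=KG$ to $|G|\cdot\#\mathrm{Hom}(\pi_1(\Sigma_h),G)$, and it belongs to the family of Reshetikhin--Turaev/Turaev--Viro type invariants considered in Section~\ref{ex2}.

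The heart of the argument --- and the step I expect to be the main obstacle --- is the evaluation
\begin{equation*}
J_h\;=\;\dim(H)^{2}\sum_i q_i^{\,2h-2}\qquad(h\ge1),
\end{equation*}
which generalizes Mednykh's classical formula $\#\mathrm{Hom}(\pi_1(\Sigma_h),G)=|G|^{2h-1}\sum_i(\dim W_i)^{2-2h}$ to an arbitrary semisimple Hopf algebra. Its proof amounts to computing the above contraction in $H\cong\prod_i\mathrm{End}(W_i)$, using that $\l=\dim(H)\,e_{\mathrm{triv}}$, that $\la$ restricts to $\dim(W_i)$ times the trace on the $i$-th matrix block, and the Schur-type ``handle'' identities for the integral --- the Hopf-algebraic incarnation of the orthogonality relations $\sum_{(\l)}\chi_i\big(S(\l_{(2)})\big)\l_{(1)}=\tfrac{\dim H}{\dim W_i}\,e_i$ --- which make a single handle contribute the scalar $\dim(H)^2/\dim(W_i)^2$ on $\mathrm{End}(W_i)$. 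Carrying this out for a general $H$ rather than a group algebra is where the work lies; everything else is a formal consequence of the structure of a semisimple Hopf algebra.

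Granting the evaluation, the conclusion is immediate. Each $J_h$ is a basic invariant, hence an algebraic integer by hypothesis, and it is a rational number, so $J_h\in\Z$; in particular $J_1=\dim(H)^2\cdot\#\{i\}\in\Z$ as well. Form the power series
\begin{equation*}
F(t)\;=\;\sum_{k\ge0}J_{k+1}\,t^{k}\;=\;\dim(H)^{2}\sum_i\frac{1}{1-q_i^{2}\,t},
\end{equation*}
an element of $\Z[[t]]$ that represents a rational function. By Fatou's theorem on integral power series, $F=P/Q$ with $P,Q\in\Z[t]$ and $Q(0)=1$; since the poles of $F$ are exactly the numbers $1/q_i^{2}$, each $q_i^{2}$ is a reciprocal root of $Q$ and therefore a root of the monic integral polynomial $t^{\deg Q}Q(1/t)$. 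Hence $q_i^{2}$ is an algebraic integer, and being rational it lies in $\Z$, i.e. $\dim(W_i)^2\mid\dim(H)^2$, which prime by prime is equivalent to $\dim(W_i)\mid\dim(H)$. Applying the same argument to $H^*$ gives $\dim(V_j)\mid\dim(H)$ for all $j$, which is Kaplansky's sixth conjecture for $H$.
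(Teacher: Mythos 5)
Your family of invariants is exactly the one the paper uses: your $J_h$ equals $\la\bigl((c_H^1)^h\bigr)$ for the central element $c_H^1=\l^1_1\l^2_1S(\l^1_2)S(\l^2_2)=\sum_i\frac{\dim(H)^2}{\dim(W_i)^2}e_i$ of Section \ref{ex2}, so the ``generalized Mednykh evaluation'' that you single out as the main obstacle is in fact immediate from Equation \ref{integral2} (the handle identity you quote) together with $\la(e_i)=\dim(W_i)^2$; that is not where the difficulty lies. Your endgame, on the other hand, is genuinely different: you package the $J_h$ into the generating function $\sum_{k}J_{k+1}t^k$ and invoke Fatou's theorem on integral power series, whereas the paper reads $\la((c_H^1)^n)$ as $tr(M^n)$ for a diagonal rational matrix $M$ and runs a localization argument prime by prime. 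Both endgames are correct, and yours is arguably the cleaner of the two. Your preliminary reduction of the $H^*$ statement to the $H$ statement, via the observation that the $(0,0)$-basic invariants of $H$ and of $H^*$ coincide as sets of scalars, is also correct and is needed (the paper leaves it implicit).

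The one genuine gap is in the claim that $J_h$ itself --- with no denominator --- lies in the $\Z$-algebra generated by the basic invariants. Your tensor network must produce the $2h$ antipodes occurring in the relator $[a_1,b_1]\cdots[a_h,b_h]$, and the mechanism you indicate (``the antipode, which can itself be written through $\l$ and $\la$'', i.e.\ Equation \ref{id1}) expresses $\dim(H)\cdot S$, not $S$, as a contraction of integrals. Followed literally, the honest basic invariant your construction yields is $\dim(H)^{2h}J_h$, so the hypothesis only gives $\dim(H)^{2h}J_h\in\Z$; feeding $\widetilde{J}_h=\dim(H)^{2h}J_h$ into your Fatou argument produces the statement that $\dim(H)^2q_i^2$ is an algebraic integer, i.e.\ only $\dim(W_i)\mid\dim(H)^2$, which is strictly weaker than Kaplansky's conjecture. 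The repair is the paper's: by Etingof--Gelaki the exponent $m$ of $H$ is finite, so $S(x)=x_1x_2\cdots x_{m-1}$ (Equation \ref{antipode}) writes the antipode with no scalar factor, and then $J_h=\la((c_H^1)^h)$ is on the nose a basic invariant. With that single substitution your argument closes.
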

\end{section}


\begin{section}{preliminaries}\label{prelim}
\begin{subsection}{Hopf algebras}
A \textit{bialgebra} $H$ over a field $K$ 
is an algebra with unit $(H,m,1)$ which is also a coalgebra with a counit $(H,\Delta,\ep)$
such that the counit $\ep$ and the comultiplication $\Delta$ are algebra maps
(or equivalently, such that $m$ and $1:K\ra H$ are coalgebra maps).
This means that $\ep(1) = 1$, $\Delta(1) = 1\ot 1$, $\ep(xy) = \ep(x)\ep(y)$
and that \begin{equation}\label{compatib}\Delta(xy) = \Delta(x)\Delta(y)\end{equation}
for $x,y\in H$.
The vector space $Hom_K(H,H)$ becomes then an algebra by the convolution product:
$$(f\star g)(x) = f(x_1)g(x_2)$$
where we use the Sweedler notation $\Delta(x) = x_1\ot x_2$.
A bialgebra is called a \textit{Hopf algebra} if the identity map has a two sided inverse in
$Hom_K(H,H)$ with respect to the convolution product. 
This inverse (if it exists) is called the \textit{antipode} of $H$
and is denoted by $S$.

If $H$ is a finite dimensional Hopf algebra then $H^*$
is again a Hopf algebra, with multiplication $\Delta^*$ and comultiplication $m^*$. 
The natural isomorphism $Hom_K(H,H)\cong H\ot H^*$ of vector spaces 
is an isomorphism of algebras where the left hand side is an algebra with respect to the convolution product, 
and the right hand side is an algebra with respect to the tensor product of the two algebras $H$ and $H^*$.

A lot is known about the structure of a finite dimensional Hopf algebra $H$ if it is also known to be semisimple as an algebra (we shall assume that this is the case for the rest of this paper.
Everything that will not be proved here can be found in \cite{LR1}, \cite{LR2}, \cite{Montgomery} and \cite{EG2}). 
Indeed, by Larson-Radford Theorem we know that $H^*$ is also semisimple.
We also know that $S^2=Id$ and by a result of Etingof and Gelaki we know that the exponent of the Hopf algebra is finite.
This means that for some natural number $m$ it holds that $x_1x_2\cdots x_m = \ep(x)$ for every $x\in H$.
In particular, the identity $Id_H\in H\ot H^*\cong Hom_K(H,H)$ has a finite order in the convolution algebra,
and therefore $S = Id^{\star m-1}$ so that \begin{equation}\label{antipode}S(x) = x_1x_2\cdots x_{m-1}.\end{equation}

Since $H$ is semisimple and $K$ is algebraically closed we have by the Wedderburn Theorem an isomorphism $H\cong \oplus_i End(W_i)$ of algebras, 
where $\{W_i\}$ are the distinct types of irreducible representations of $H$.
In a similar way we have an isomorphism $H^*\cong \oplus_j End(V_j)$ where $\{V_j\}$ 
are the distinct types of irreducible representations of $H^*$. 
We will recall here some useful identities that the integrals in $H$ and in $H^*$ satisfy.
We first recall that a left integral $\l\in H$ is an element which satisfies $x\l = \ep(x)\l$ for every $x\in H$
(a left integral $\la\in H^*$ and right integrals in $H$ and in $H^*$ are defined in the obvious way).
It is known that any finite dimensional Hopf algebra contains a one dimensional subspace of left integrals.
In case $H$ is semisimple, this subspace will be spanned by $e_1$, the central idempotent which corresponds 
to the trivial representation of $H$, and left and right integrals coincide.
Let then $\l\in H$ and $\la\in H^*$ be integrals which satisfy $\ep(\l)=\la(1) = dim(H)$
(it is known that in a semisimple Hopf algebra the counit does not vanish on non-trivial left integrals). 
It is known that $\l$ is also the character of the regular representation of $H^*$  and similarly
$\la$ is the character of the regular representation of $H$.
Since $H$ is semisimple, $\l=dim(H)e_1$.
Thus, if $\psi$ is an irreducible character of $H$ we have $\psi(\l) = \delta_{\psi,\ep}dim(H)$.
Recall that since $\l$ is an integral it holds that for every $x\in H$ we have 
\begin{equation}\label{intcoprod1} x\l_1\ot \l_2 = \l_1\ot S(x)\l_2\textrm{ and } \l_1x\ot \l_2 = \l_1\ot \l_2S(x)\end{equation}
It then follows that $$x\l_1\ot S(\l_2) = \l_1\ot S(\l_2)x.$$
From this we can prove that if we write $\{e^i_{j,k}\}$ for the matrix units in $End(W_i)$ with respect to some basis, then it holds that
\begin{equation}\label{integral1} \l_1\ot S(\l_2) = \sum_i\frac{dim(H)}{dim(W_i)}\sum_{j,k}e^i_{j,k}\ot e^i_{k,j}.\end{equation}
Moreover, the map $P:H\ra H$ given by $$P(x) = \frac{1}{dim(H)}\l_1xS(\l_2)$$ 
is a projection of $H$ onto the center of $H$, and it can be written explicitly as
$P(M) = \frac{1}{dim(W_i)}tr(M)e_i$ for $M\in End(W_i)$, where we denote by $e_i$ the identity element of $End_K(W_i)$.
Another result of Equation \ref{integral1} and the fact that $\la$ is the character of the regular representation of $H$,
is that \begin{equation}\label{id2} dim(H)Id_H= \la_1(\l_1)S(l_2)\ot \la_2 \in H\ot H^*\cong End_K(H)\end{equation} and since $S^2=Id$ we have 
\begin{equation}\label{id1}dim(H)S = \la_1(\l_1)l_2\ot \la_2.\end{equation}
We will use this equation later, in order to prove certain properties of the spaces $Inv^{i,j}$ (which will be defined immediately).
Integrals induce a linear isomorphism between $H$ and $H^*$.
Indeed, the maps $\rho:H\ra H^*$ $x\mapsto \la_1(x)\la_2$ and $\mu:H^*\ra H$ $f\mapsto f(\l_1)\l_2$ are both linear isomorphisms.
Their composition can be seen to equal to $\mu\rho(x) = dim(H)S(x)$ (this is essentially Equation \ref{id1}).
Moreover, these maps give us a nice relation between the characters of $H$ and the center of $H$:
if we denote by $\psi_i$ the character of $W_i$, then a direct calculation shows that \begin{equation}\label{idem-char}\mu(S(\psi_i)) = \frac{dim(H)}{dim(W_i)}e_i 
\textrm{ and } \rho(e_i) = dim(W_i)\psi_i.\end{equation}

We now introduce a family of subspaces $Inv^{i,j}$ of $H^{i,j}:=H^{\ot i}\ot (H^*)^{\ot j}$, where $i$ and $j$ are two natural numbers.
We have a natural pairing $ev:H\ot H^*\ra K$. We denote by $ev:H^{i,j}\ra H^{i-1,j-1}$ also the evaluation map on the first copy of $H$ with the first copy of $H^*$.
\begin{definition}\label{invs}
An $(i,j)$-character basic invariant is an element of $H^{i,j}$ of the form $T_1T_2\cdots T_l(\mu_1\ot \cdots\ot\mu_a\ot \nu_1\ot\cdots\ot\nu_b)$,
where $\mu_s\in H^*$ are characters of representations of $H$, 
$\nu_t\in H$ are characters of representations of $H^*$,  
and the linear maps $T_i$ are either comultiplication on $H$, comultiplication on $H^*$, a permutation of the tensor factors of $H$, a permutation of the tensor factors of $H^*$ or the evaluation map. 
We denote by $\ti{Inv}^{i,j}$ the space spanned by all the $(i,j)$- character basic invariants.
In case all the characters $\mu_s$ and $\nu_t$ are the characters of the regular representations of $H$ and $H^*$ respectively, we call the resulting element an $(i,j)$-basic invariant. 
We denote by $Inv^{i,j}$ the space spanned by all the $(i,j)$-basic invariants. 
\end{definition}
So for example $\l\in Inv^{1,0}$ and $\l_1\ot \l_2\ot \la\in Inv^{2,1}$. 
Alternatively, one can define $(Inv^{i,j})$ (or $\ti{Inv}^{i,j}$) as the smallest collection of subspaces of $H^{i,j}$ such that:\\
1. It holds that $\l\in Inv^{1,0}$ and $\la\in Inv^{0,1}$ (all characters of $H$ are contained in $\ti{Inv}^{0,1}$ and all characters of $H^*$ are contained in $\ti{Inv}^{1,0}$). \\
2. The collection $Inv^{i,j}$ ($\ti{Inv}^{i,j}$) is closed under comultiplication in $H$ and in $H^*$.\\
3. The collection $Inv^{i,j}$ ($\ti{Inv}^{i,j}$) is closed under the action of the symmetric groups on tensor powers.\\
4. The collection $Inv^{i,j}$ ($\ti{Inv}^{i,j}$) is closed under tensor product (in the sense that $Inv^{i,j}\ot Inv^{a,b}\subseteq Inv^{i+a,j+b}$ under the identification $H^{i,j}\ot H^{a,b}\cong H^{i+a,j+b}$).\\
5. The collection $Inv^{i,j}$ ($\ti{Inv}^{i,j}$) is closed under the evaluation map.\\
Notice in particular that for $(i,j)=(0,0)$ we get two collections of scalars, namely $Inv^{0,0}$ and $\ti{Inv}^{0,0}$.
These collection will appear again later as the values of certain invariant polynomial functions.
We will next prove that the collection $Inv^{i,j}$ has some closure properties, and contains certain elements.
The same results hold for the collection $(\ti{Inv}^{i,j})$ and the proofs are similar, in case they do not follows directly from the fact that $Inv^{i,j}\subseteq \ti{Inv}^{i,j}$.
\begin{lemma} 
The subspace $Inv^{i,j}$ contains $S$ and $Id_H\in End_K(H,H)\cong H^{1,1}$ and is closed under multiplication in $H$ and in $H^*$. 
\end{lemma}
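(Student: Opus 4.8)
The goal is to show that the subspace $Inv^{i,j}$ contains $S$ and $Id_H$ under the identification $End_K(H)\cong H^{1,1}=H\ot H^*$, and that the collection $(Inv^{i,j})$ is closed under the multiplication map $m:H\ot H\ra H$ (viewed as $H^{2,0}\ra H^{1,0}$ applied to the relevant tensor slots) and under the multiplication $\Delta^*$ of $H^*$ (i.e. the map $H^{0,2}\ra H^{0,1}$ on the relevant slots). The strategy throughout is to express each desired element or operation using only the five closure properties listed after Definition \ref{invs}: membership of $\l\in Inv^{1,0}$ and $\la\in Inv^{0,1}$, and closure under comultiplication, permutations, tensor products, and evaluation.

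First I would handle $Id_H$ and $S$. Here the key input is Equation \ref{id1}, which reads $dim(H)S = \la_1(\l_1)\l_2\ot\la_2\in H\ot H^*$, together with Equation \ref{id2} for $Id_H$. The right-hand side of \ref{id1} is manufactured from $\l\ot\la\in Inv^{1,1}$ (a tensor product of the generators, using property 4) by first applying comultiplication to both the $H$-factor and the $H^*$-factor — landing in $Inv^{2,2}$ — then applying a permutation so that the "inner" copies $\l_1$ and $\la_1$ are moved into the first slots, and finally applying the evaluation map $ev:H^{2,2}\ra H^{1,1}$ to pair $\l_1$ with $\la_1$. The surviving element is exactly $\la_1(\l_1)\l_2\ot\la_2$, which is $dim(H)S$; since $Inv^{1,1}$ is a $K$-subspace and $dim(H)\neq 0$, we get $S\in Inv^{1,1}$. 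The argument for $Id_H$ is identical starting from Equation \ref{id2} (the only difference is an extra antipode on one factor, and $S$ can be absorbed since $S=Id^{\star(m-1)}$ is itself built from iterated comultiplication and multiplication — or one simply notes $S^2=Id$ and applies the $S$-construction twice).

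Next, closure under multiplication in $H$. The point is that the multiplication map $m:H\ot H\ra H$ can be rewritten, up to the identifications above, using comultiplication and evaluation: concretely, for $x,y\in H$ one has $xy = (ev\ot Id_H)$ applied to a suitable rearrangement involving $\Delta$ on a factor and the identity element $Id_H\in H^{1,1}$ already shown to lie in $Inv^{1,1}$. More precisely, I would use that the multiplication on $H$ is dual to the comultiplication on $H^*$, so that pairing against $Id_H\in Inv^{1,1}$ and then evaluating converts a $\Delta_{H^*}$-type operation into an $m_H$-type operation; since $Inv^{i,j}$ is closed under tensoring with $Inv^{1,1}$, under comultiplication on $H^*$, under permutations, and under evaluation, it follows that applying $m$ to two $H$-slots of an element of $Inv^{i,j}$ (with $i\geq 2$) lands back in $Inv^{i-1,j}$. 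The statement for multiplication in $H^*$ is the mirror image, obtained by swapping the roles of $H$ and $H^*$ (equivalently, by applying the same argument to the dual Hopf algebra, whose integral-generated invariant spaces are $\ti{Inv}$'s for $H^*$ but restrict correctly).

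**Main obstacle.** The genuinely delicate point is the bookkeeping of the permutations and evaluations needed to realize $m$ as a composite of the five allowed operations — in particular, verifying that pairing with the already-constructed copy of $Id_H\in Inv^{1,1}$ really does implement multiplication on the nose (including getting the Sweedler indices and the antipode placements right, using identities \ref{intcoprod1}, \ref{id2}, \ref{id1}). Everything else is formal manipulation with the closure axioms; I do not expect any conceptual difficulty beyond writing down the correct sequence of maps $T_1,\dots,T_l$.
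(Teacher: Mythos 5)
Your argument is correct and follows the paper's overall strategy (get $S$ and $Id_H$ from Equations \ref{id1} and \ref{id2}, then exhibit the multiplication map as an element of $Inv^{1,2}$ and invoke closure under tensor product and evaluation), but your treatment of the multiplication step is a genuinely different — and cleaner — route. The paper writes down an explicit integral expression $\l^2_2\ot \la^1_1\ot \la^1_2\la^1_3(\l^1_1)\la^2_1(\l^1_2)\la^2_2(\l^2_1)\in Inv^{1,2}$ and asserts that a ``direct verification'' using \ref{intcoprod1} and \ref{id1} shows it equals $m$ up to a scalar; you instead observe that once $Id_H=\sum_i h_i\ot h^i\in Inv^{1,1}$ is known, applying $\Delta_{H^*}=m_H^*$ to its $H^*$-leg (closure property 2) gives $\sum_i h_i\ot (h^i)_1\ot (h^i)_2\in Inv^{1,2}$, which as a map $H\ot H\ra H$ sends $x\ot y\mapsto \sum_i h_i\, h^i(xy)=xy$ — so the messy Sweedler bookkeeping you flag as the ``main obstacle'' in fact evaporates, and the mirror statement $(\Delta_H\ot\mathrm{id})(Id_H)=m_{H^*}\in Inv^{2,1}$ handles multiplication in $H^*$. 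One small caution: your first suggested way of getting $Id_H$, ``absorbing'' the antipode in \ref{id2} via $S=Id^{\star(m-1)}$, is circular at this stage (that formula presupposes closure under multiplication, which you have not yet proved); your alternative — $S\in Inv^{1,1}$ directly from \ref{id1}, then $Id_H=S\circ S$ via tensor product, permutation and evaluation — is the one to keep.
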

\begin{proof}
The first claim follows directly from Equations \ref{id2} and \ref{id1}. 
The second claim follows from the fact that the multiplication in $H$ can be written (up to a nonzero scalar) as
$$\l^2_2\ot \la^1_1\ot \la^1_2\la^1_3(\l^1_1)\la^2_1(\l^1_2)\la^2_2(l^2_1)\in Inv^{1,2}$$
where $\l^1$ and $\l^2$ are two copies of $\l$ (and similarly for $\la$).
The proof is just a direct verification, using repeatedly Equations \ref{intcoprod1} and \ref{id1}. 
Since the collection of subspaces $Inv^{i,j}$ is closed under tensor product and evaluation, this proves that it is closed under the operation of multiplication.
\end{proof}
We will need some more specific Hopf algebra identities later.
We begin with the following identity, which first appeared in \cite{CW}:
\begin{lemma}
We have an equality \begin{equation}\label{integral2}\l^1_1\ot \l^2_1S(\l^1_2)S(\l^2_2) = \sum_i \frac{dim(H)^2}{dim(W_i)^2}e_i\ot e_i\end{equation}
where $\l^1$ and $\l^2$ are two copies of $\l$.
\end{lemma}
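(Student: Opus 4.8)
The plan is to deduce this from Equation~\ref{integral1} together with the explicit description of the central projection $P$ recalled above: namely $P(x) = \frac{1}{dim(H)}\l_1 x S(\l_2)$, with $P(M) = \frac{1}{dim(W_i)}tr(M)e_i$ for $M\in End(W_i)$. First I would rewrite the second tensor factor $\l^2_1\,S(\l^1_2)\,S(\l^2_2)$ of the left-hand side: applying the formula $\l_1 x S(\l_2) = dim(H)\,P(x)$ with the second copy $\l^2$ of $\l$ and with $x = S(\l^1_2)$, this factor equals $dim(H)\,P(S(\l^1_2))$, so that
$$\l^1_1\ot \l^2_1 S(\l^1_2) S(\l^2_2) \;=\; dim(H)\,\big(\l^1_1\ot P(S(\l^1_2))\big).$$

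Next I would expand $\l^1_1\ot S(\l^1_2)$ by Equation~\ref{integral1} and apply $P$ to the second tensor factor. Since $tr(e^i_{k,j}) = \delta_{j,k}$ we have $P(e^i_{k,j}) = \frac{\delta_{j,k}}{dim(W_i)}e_i$, so only the diagonal matrix units survive, and using $\sum_j e^i_{j,j} = e_i$ one obtains
$$\l^1_1\ot P(S(\l^1_2)) \;=\; \sum_i \frac{dim(H)}{dim(W_i)^2}\,e_i\ot e_i .$$
Multiplying by $dim(H)$ gives the claimed identity. Alternatively one may bypass $P$ altogether and substitute Equation~\ref{integral1} for both $\l^1$ and $\l^2$ directly into the left-hand side, then evaluate the resulting product $e^{i'}_{j',k'}\,e^i_{k,j}\,e^{i'}_{k',j'}$ of matrix units in $H\cong\oplus_i End(W_i)$ using $e^i_{a,b}e^{i'}_{c,d} = \delta_{i,i'}\delta_{b,c}e^i_{a,d}$: it vanishes unless $i=i'$ and $j=k=k'$, in which case it equals $e^i_{j',j'}$, and the remaining free sums again collapse each tensor factor to $e_i$, with overall coefficient $\frac{dim(H)^2}{dim(W_i)^2}$.

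Since both routes are direct computations with the integral identities, I do not expect a genuine obstacle here; the only point requiring care is keeping the matrix-unit indices straight so that the double sum collapses exactly onto the ``diagonal'' contributions $e_i\ot e_i$, which is precisely where the explicit form of Equation~\ref{integral1} does the work.
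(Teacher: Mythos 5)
Your proof is correct and follows essentially the same route as the paper, which likewise deduces the identity from Equation~\ref{integral1} together with the explicit form of the central projection $P(x)=\frac{1}{\dim(H)}\l_1xS(\l_2)$. Both your main argument and your alternative matrix-unit computation check out, including the collapse onto the diagonal terms and the coefficient $\frac{\dim(H)^2}{\dim(W_i)^2}$.
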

\begin{proof}
This follows from Equation \ref{integral1} and what we have proved about the map $P$ above.
\end{proof}
We write $c_H:=\l^1_1\ot \l^2_1S(\l^1_2)\l^2_2\in H\ot H$.
It thus holds that $c_H\in Inv^{2,0}$.
We can use this element to distinguish characters of different dimensions.
More precisely, we claim the following:
\begin{lemma}\label{mess1}
Let $d$ be a natural number.
The element $$c^{2,d}_H=\sum_{i, dim(W_i)=d}e_i\ot e_i$$ can be written as a polynomial in ${c_H}$.
As a result, for every $d$ and $n$ the element $c^{n,d}_H=\sum_{i,dim(W_i)=d}e_i^{\ot n}$
belongs to $Inv^{n,0}$.
\end{lemma}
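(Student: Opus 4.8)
The plan is to deduce the lemma directly from Equation~\ref{integral2}. By that identity we may write
\[
c_H=\sum_i \frac{dim(H)^2}{dim(W_i)^2}\,e_i\ot e_i\in H\ot H,
\]
so the statement is really about this single ``diagonal'' element. First I would record that the $e_i\ot e_i$ are pairwise orthogonal idempotents of $H\ot H$ (the $e_i$ being central orthogonal idempotents of $H$), whence
\[
c_H^k=\sum_i\Bigl(\tfrac{dim(H)^2}{dim(W_i)^2}\Bigr)^{k}e_i\ot e_i
\]
for every $k\ge1$. The scalars $dim(H)^2/d^2$, as $d$ runs over the finite set $D:=\{dim(W_i)\}$, are pairwise distinct (the map $d\mapsto dim(H)^2/d^2$ is injective on positive integers) and nonzero, so Lagrange interpolation through the nodes $\{0\}\cup\{dim(H)^2/e^2:e\in D\}$ yields a polynomial $p(t)\in K[t]$ with $p(0)=0$, with $p(dim(H)^2/d^2)=1$, and with $p(dim(H)^2/e^2)=0$ for $e\in D\setminus\{d\}$. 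Then $p(c_H)=\sum_{i:\,dim(W_i)=d}e_i\ot e_i=c^{2,d}_H$, which is the asserted polynomial in $c_H$.

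To conclude that $c^{2,d}_H$ lies in $Inv^{2,0}$, I would use that $c_H\in Inv^{2,0}$ and that $Inv^{2,0}$ is closed under the multiplication of $H\ot H$: given two elements of $Inv^{2,0}$, their tensor product lies in $Inv^{4,0}$, and after a permutation one may multiply the two pairs of $H$-slots, invoking the closure of the spaces $Inv^{i,j}$ under tensor products, permutations and multiplication in $H$ established above. Hence every $c_H^k$ with $k\ge1$ lies in $Inv^{2,0}$, and since $p$ has no constant term, $p(c_H)=c^{2,d}_H$ does too; choosing $p(0)=0$ is precisely what frees us from having to know whether $1\ot1\in Inv^{2,0}$.

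Finally, I would build $c^{n,d}_H$ out of $c^{2,d}_H$ by gluing copies. For $n=1$, multiplying the two tensor factors of $c^{2,d}_H$ gives $\sum_{i:\,dim(W_i)=d}e_i^2=c^{1,d}_H$. For $n\ge2$, the tensor product of $n-1$ copies of $c^{2,d}_H$ lies in $Inv^{2(n-1),0}$ and equals
\[
\underbrace{c^{2,d}_H\ot\cdots\ot c^{2,d}_H}_{n-1}=\sum e_{i_1}\ot e_{i_1}\ot\cdots\ot e_{i_{n-1}}\ot e_{i_{n-1}},
\]
the sum over tuples with $dim(W_{i_s})=d$ for all $s$; now, for $s=1,\dots,n-2$, multiply (in $H$) the slot carrying the second $e_{i_s}$ with the slot carrying the first $e_{i_{s+1}}$. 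Because $e_{i_s}e_{i_{s+1}}=\delta_{i_s,i_{s+1}}e_{i_s}$, these $n-2$ multiplications collapse the sum to a single index and leave exactly $n$ tensor slots, giving $\sum_{i:\,dim(W_i)=d}e_i^{\ot n}=c^{n,d}_H$; closure of the collection $(Inv^{i,j})$ under tensor products, permutations and multiplication then puts this in $Inv^{n,0}$. I do not anticipate a real obstacle: the entire content is Equation~\ref{integral2}, and the rest is bookkeeping with orthogonal idempotents and with the closure properties of the $Inv^{i,j}$. The only two points deserving a moment of care are keeping $p(c_H)$ inside $Inv^{2,0}$ (handled by imposing $p(0)=0$) and verifying that the interpolation nodes $dim(H)^2/d^2$ genuinely separate the dimension strata $\{i:dim(W_i)=d\}$, which is immediate.
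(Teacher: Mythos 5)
Your proof is correct and follows essentially the same route as the paper: interpolation through the pairwise distinct eigenvalues $\dim(H)^2/d^2$ of $c_H$ for the first claim, and slotwise multiplication of $n-1$ copies of $c^{2,d}_H$ (which the paper writes as the product $(c^{2,d}_H)_{1,2}(c^{2,d}_H)_{2,3}\cdots (c^{2,d}_H)_{n-1,n}$, with the same $n=1$ base case $m(c^{2,d}_H)=c^{1,d}_H$) for the second. Your explicit insistence on $p(0)=0$ is a small but genuine tightening: the paper only requires $f(\dim(H)^2/d'^2)=\delta_{d,d'}$ and leaves the necessary vanishing of the constant term (needed because $1\ot 1=\sum_{i,j}e_i\ot e_j$ has off-diagonal components) implicit.
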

\begin{proof}
The first part of the lemma follows from the fact that there exists a polynomial $f$ with coefficients in $\Q$
such that $f(\frac{dim(H)^2}{d^2})=1$ and $f(\frac{dim(H)^2}{d'^2})=0$ for every $d'\neq d$ which is 
a dimension of an irreducible representation of $H$.
The second part of the lemma follows from the fact that we have
$${c^{n,d}_H} = (c^{2,d}_H)_{1,2}(c^{2,d}_H)_{2,3}\cdots (c^{2,d}_H)_{n-1,n}$$
where $(c^{2,d}_H)_{1,2} = {c^{2,d}_H}\ot {1_H}^{\ot (n-2)}$ and similarly for the other indices.
Notice that for $n=1$ the result holds because $m({c^{2,d}_H})={c^{1,d}_H}$ where we denote by 
$m$ the multiplication in $H$.
\end{proof}
The following corollary will be used in the proof of the main result of Section \ref{G-invariants}
\begin{corollary}\label{ekses}
For every sequence of natural numbers $a_1,\ldots a_n$ the expression
\begin{equation}\label{mess2}\sum_{\substack{(i_1,\ldots,i_n):\\ dim(W_{i_j})=d\, , |\{i_1,\ldots, i_n\}|=n}} e_{i_1}^{\ot a_1}\ot e_{i_2}^{\ot a_2}\ot\cdots\ot e_{i_n}^{\ot a_n}\end{equation}
belongs to $Inv^{a_1+a_2+\cdots + a_n,0}$.
\end{corollary}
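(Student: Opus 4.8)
Proof proposal. The plan is to derive this from Lemma \ref{mess1} by an inclusion--exclusion over set partitions, run as an induction on $n$ (one may assume all $a_j\ge 1$). First I would form the element
$$\xi\ :=\ c_H^{a_1,d}\ot c_H^{a_2,d}\ot\cdots\ot c_H^{a_n,d}.$$
By Lemma \ref{mess1} each factor $c_H^{a_k,d}=\sum_{i:\,\dim W_i=d}e_i^{\ot a_k}$ lies in $Inv^{a_k,0}$, and since the collection $Inv^{i,j}$ is closed under tensor products we get $\xi\in Inv^{a_1+\cdots+a_n,0}$. Expanding the product, $\xi$ is exactly the sum of $e_{i_1}^{\ot a_1}\ot\cdots\ot e_{i_n}^{\ot a_n}$ over \emph{all} tuples $(i_1,\ldots,i_n)$ with $\dim W_{i_j}=d$, and not only over the tuples with pairwise distinct entries. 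So the issue is to isolate the ``all distinct'' part of $\xi$.

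Next I would sort this sum by the pattern of coincidences among the indices. For a set partition $\pi$ of $\{1,\ldots,n\}$, let $S_\pi$ be the partial sum over those tuples $(i_1,\ldots,i_n)$ (with $\dim W_{i_j}=d$) for which $i_k=i_l$ holds precisely when $k$ and $l$ lie in the same block of $\pi$. Then every admissible tuple has a unique such pattern, so $\xi=\sum_\pi S_\pi$, and the expression \eqref{mess2} that we want is exactly $S_{\pi_0}$, where $\pi_0$ is the partition of $\{1,\ldots,n\}$ into singletons. The key observation is that if $\pi$ has blocks $B_1,\ldots,B_r$ (ordered, say, by least element) and we set $b_t:=\sum_{k\in B_t}a_k$, then a tuple with coincidence pattern $\pi$ is the same datum as a choice of pairwise distinct $j_1,\ldots,j_r$ with $\dim W_{j_t}=d$, and its contribution to $S_\pi$ is obtained from $e_{j_1}^{\ot b_1}\ot\cdots\ot e_{j_r}^{\ot b_r}$ by one fixed permutation of the $a_1+\cdots+a_n$ tensor slots, depending only on $\pi$ and the $a_k$'s, which re-interleaves the factors back into their original positions. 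Since that permutation is the same for all choices of $j_1,\ldots,j_r$, and since $Inv^{i,j}$ is closed under permutations of the tensor factors, $S_\pi$ lies in $Inv^{a_1+\cdots+a_n,0}$ if and only if the lower instance of \eqref{mess2} attached to the sequence $(b_1,\ldots,b_r)$ lies in $Inv^{b_1+\cdots+b_r,0}$.

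I would then conclude by induction on $n$. For $n=1$ the expression \eqref{mess2} is just $c_H^{a_1,d}\in Inv^{a_1,0}$ by Lemma \ref{mess1}. For $n>1$, every partition $\pi\neq\pi_0$ has strictly fewer than $n$ blocks, so by the inductive hypothesis applied to $(b_1,\ldots,b_r)$ together with the previous paragraph we get $S_\pi\in Inv^{a_1+\cdots+a_n,0}$; since $\xi\in Inv^{a_1+\cdots+a_n,0}$ and $Inv^{a_1+\cdots+a_n,0}$ is a linear subspace, it follows that $S_{\pi_0}=\xi-\sum_{\pi\neq\pi_0}S_\pi$ also lies in $Inv^{a_1+\cdots+a_n,0}$, which is what we wanted. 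I expect the only genuine point of care, and the step I would write out most carefully, to be the precise description of the tensor-slot permutation identifying $S_\pi$ with a lower instance of \eqref{mess2}; the remaining combinatorial bookkeeping (that the coincidence patterns partition the index set, and the count of blocks strictly drops for $\pi\neq\pi_0$) is routine.
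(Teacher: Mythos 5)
Your argument is correct, and it takes a genuinely different route from the paper's. The paper first reduces to the case where every $a_i=1$ by multiplying pairs of coordinates with $c_H^{2,d}$, and then inducts on $n$ by forming $x_{n-1}\ot c_H^{1,d}$ and multiplying suitable pairs of tensor slots by the idempotent $1_{H\ot H}-c_H^{2,d}$, which annihilates exactly the terms where the new index collides with an earlier one; this step leans on the closure of the spaces $Inv^{i,j}$ under multiplication in $H$, a nontrivial fact established earlier via Hopf-algebra identities. You replace that multiplicative sieve with a purely linear inclusion--exclusion over set partitions of $\{1,\ldots,n\}$: your proof needs only Lemma \ref{mess1}, closure under tensor products and under permutations of the tensor slots, and the fact that $Inv^{a_1+\cdots+a_n,0}$ is a linear subspace. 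What your approach buys is that it treats arbitrary exponents $a_i$ in one pass, with no preliminary reduction, and avoids invoking closure under multiplication at this point. What the paper's approach buys is brevity --- the product of idempotents performs the whole sieve in one stroke --- and it makes transparent the side observation that the sum vanishes as soon as $n$ exceeds the number of irreducibles of dimension $d$. The one step you flag as requiring care, namely that the slot permutation identifying $S_\pi$ with the lower instance attached to $(b_1,\ldots,b_r)$ is the same for all choices of the distinct indices $j_1,\ldots,j_r$, is indeed the only place where details must be written out, and it goes through exactly as you describe.
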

\begin{proof}
By multiplying with ${c^{2,d}_H}$ (which belongs to $Inv^{2,0}$) enough times we can reduce to the case where $a_i=1$ for every $i$.
Notice that if $n$ is bigger than the number of non-isomorphic irreducible representations
of dimension $d$ then the sum is zero, and the result holds trivially.

We will prove the result by induction on $n$.
The case where $n=1$ follows from Lemma \ref{mess1}. 
If the result holds for $n-1$ then the element
$$x_{n-1}= \sum_{\substack{(i_1,\ldots,i_{n-1})\\ dim(W_{i_j})=d\, , |\{i_1,\ldots, i_{n-1}\}|=n-1}} e_{i_1}\ot e_{i_2}\ot\cdots\ot e_{i_{n-1}}$$
belongs to $Inv^{n-1,0}$.
The same then holds for $y:=x_{n-1}\ot {c_H}^{1,d}$.
By multiplying the idempotent $1_{H\ot  H}-{c_H}^{2,d}$ with some of the pairs of tensors in $y$, we get the result,
since the $Inv^{i,j}$ spaces are closed under multiplication.
\end{proof}
For future reference, We shall denote the expression in Equation \ref{mess2} by $x^{a_1,\ldots, a_n}_{d,n}$
\end{subsection}

\begin{subsection}{Geometric Invariant theory}
Let $X$ be an affine variety, and let $G$ be a reductive algebraic group which acts on $X$ rationally.
In this paper, $X$ will be the variety of all Hopf algebra structures with given algebra and coalgebra structures,
and $G$ will be (virtually) a product of $PGL_n$'s (The variety $X$ and the group $G$ will be constructed in Section \ref{constructingX}).
The following theorem is a collection of results from Geometric Invariant Theory which we will use in this paper.
\begin{theorem}\label{GIT} (see Chapter 3 of \cite{Newstead})
Assume that $G$ acts on $X$ with finite stabilizers. Then the orbit space $X/G$ is also an affine variety.
Moreover, we have an isomorphism $K[X/G]\cong K[X]^G$, and the natural map $X\ra X/G$ corresponds to the inclusion of algebras $K[X]^G\ra K[X]$.
We have a one to one correspondence between closed $G$-stable subsets of $X$ and closed subsets of $X/G$. 
Therefore, if $I\subseteq K[X]$ is a radical $G$-stable ideal of $X$, then $I\neq 0$ if and only if $I^G\neq 0$.
\end{theorem}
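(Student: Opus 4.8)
The plan is to reduce the whole statement to two ingredients: the Reynolds operator attached to a reductive group, and Nagata's finite generation theorem. Since $K$ has characteristic zero, $G$ is linearly reductive, so there is a canonical $K[X]^G$-linear projection $R\colon K[X]\ra K[X]^G$ (the Reynolds operator), which moreover satisfies $R(J)\subseteq J$ for every $G$-stable ideal $J$ and $R|_{K[X]^G}=\mathrm{id}$, because any $G$-stable subspace of $K[X]$ is the direct sum of its isotypic components. First I would invoke Nagata's theorem to conclude that $K[X]^G$ is a finitely generated $K$-algebra; being a subring of $K[X]$ it is reduced, so $X/G:=\mathrm{Spec}\,K[X]^G$ is an affine variety, and the inclusion $\iota\colon K[X]^G\hookrightarrow K[X]$ defines a morphism $\pi\colon X\ra X/G$. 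By construction $\pi$ is exactly the map corresponding to $\iota$, which is the third assertion.

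The next step is the separation property. If $W_1,W_2\subseteq X$ are disjoint closed $G$-stable subsets, then $I(W_1)+I(W_2)=K[X]$, so $1=f_1+f_2$ with $f_i\in I(W_i)$; applying $R$ gives an invariant $h:=R(f_1)=1-R(f_2)$ with $h|_{W_1}=0$ and $h|_{W_2}=1$, so the images $\pi(W_1)$ and $\pi(W_2)$ lie in disjoint subsets of $X/G$. Two consequences follow by the same linearity: (a) for a maximal ideal $\mathfrak{n}\subseteq K[X]^G$ one has $\mathfrak{n}K[X]\neq K[X]$ (otherwise $1=\sum a_if_i$ with $a_i\in\mathfrak{n}$ invariant gives $1=R(1)=\sum a_iR(f_i)\in\mathfrak{n}$), hence $\pi$ is surjective; and (b) $\pi(Z)$ is closed for every closed $G$-stable $Z$ (if $y\notin\pi(Z)$, separating $Z$ from the closed $G$-stable set $\pi^{-1}(y)$ produces an invariant vanishing on $Z$ but not at $y$). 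A third consequence is that each fiber $\pi^{-1}(y)$, being closed and $G$-stable with all invariant functions constant on it, contains exactly one closed $G$-orbit.

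Now I would use the hypothesis on stabilizers. Finite stabilizers force every orbit $O$ to have dimension $\dim G$; since $O$ is open in its closure, $\overline{O}\setminus O$ is a proper closed $G$-stable subset, hence a union of orbits each of dimension $<\dim G$, which is impossible, so $O=\overline O$ is closed. Therefore every fiber of $\pi$, which contains a unique closed orbit, is a single orbit, i.e. $\pi$ realizes $X/G$ as the orbit space and is a geometric quotient. In particular $G$-stable closed subsets of $X$ are precisely the $\pi$-saturated closed subsets, so $Z\mapsto\pi(Z)$ and $W\mapsto\pi^{-1}(W)$ are mutually inverse bijections between closed $G$-stable subsets of $X$ and closed subsets of $X/G$. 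Passing to vanishing ideals and using that $K[X/G]=K[X]^G$ with $I(\pi(Z))=I(Z)\cap K[X]^G$, this is the same as a bijection between radical $G$-stable ideals $I\subseteq K[X]$ and radical ideals of $K[X]^G$, given by $I\mapsto I^G:=I\cap K[X]^G$.

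Finally, for the last assertion: if $I$ is radical and $G$-stable with $I\neq0$, then $Z:=V(I)\subsetneq X$, so pick $x\notin Z$; its orbit $Gx$ is closed (by the step above) and disjoint from the closed $G$-stable set $Z$, so separation yields an invariant $h$ with $h|_{Gx}=1$ and $h|_Z=0$. Then $h\in I(V(I))=\sqrt I=I$ by the Nullstellensatz, so $0\neq h\in I^G$; the reverse implication is trivial since $I^G\subseteq I$. The one genuinely deep input here is Nagata's finite generation theorem — once $K[X]^G$ is known to be a finitely generated reduced $K$-algebra, everything else is formal manipulation with the Reynolds operator, and the finite-stabilizer hypothesis is used only in the elementary dimension count that shows all orbits are closed.
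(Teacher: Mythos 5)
Your proof is correct. The paper does not actually prove Theorem \ref{GIT} --- it simply cites Chapter 3 of Newstead --- and your argument (linear reductivity in characteristic zero giving the Reynolds operator, Nagata's theorem for finite generation of $K[X]^G$, separation of disjoint closed $G$-stable sets by invariants, and the dimension count showing that finite stabilizers force every orbit to be closed) is exactly the standard proof found in that reference; the only step the paper itself spells out, the closed-orbit observation, you reproduce verbatim.
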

Notice that the fact that all the stabilizers of $G$ in $X$ are finite implies that all the orbits have the same dimension.
Therefore, all the orbits are closed. 
The next thing that we need to do is to describe the invariants of some actions of some specific algebraic groups.
Our group $G$ will be a finite extension of a product of projective general linear groups. 
For the finite group part, we have the next lemma, which follows easily from Maschke's Theorem:
\begin{lemma}\label{invfinite}
If a finite group $G$ acts on a $K$-algebra $C$, then the map $c\mapsto \frac{1}{|G|}\sum_{g\in G} g(c)$
is a projection of $C$ onto $C^G$
\end{lemma}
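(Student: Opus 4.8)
The plan is to verify directly that the averaging operator $\pi(c) = \frac{1}{|G|}\sum_{g\in G}g(c)$ is a well-defined $K$-linear idempotent with image exactly $C^G$; since $K$ has characteristic zero, $|G|$ is invertible and the definition makes sense. First I would check that $\pi$ lands in $C^G$: for any $h\in G$ one computes $h\cdot\pi(c) = \frac{1}{|G|}\sum_{g\in G}(hg)(c)$, and since left multiplication by $h$ permutes $G$, the reindexing $g'=hg$ gives $h\cdot\pi(c)=\frac{1}{|G|}\sum_{g'\in G}g'(c)=\pi(c)$, so $\pi(c)\in C^G$.

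Next I would show that $\pi$ restricts to the identity on $C^G$: if $c\in C^G$ then $g(c)=c$ for all $g\in G$, hence $\pi(c) = \frac{1}{|G|}\sum_{g\in G}c = \frac{|G|}{|G|}c = c$. Combining the two observations, for arbitrary $c\in C$ we have $\pi(c)\in C^G$ and therefore $\pi(\pi(c)) = \pi(c)$, so $\pi$ is idempotent; together with linearity of each $g$-action and of the sum, this shows $\pi$ is a $K$-linear projection of $C$ onto $C^G$. (If one wants $\pi$ to respect the algebra structure that is false in general, but the statement only claims a linear projection, so nothing more is needed; I would also remark that $C$ being an algebra plays no role beyond being a $K$-vector space, which is why the argument is just the classical Maschke/Reynolds averaging trick.)

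There is no serious obstacle here: the only hypothesis doing any work is that $\mathrm{char}(K)=0$ (so $|G|\in K^\times$), which is standing in this paper, and the action being rational or linear is not needed since $G$ is finite. The entire proof is the three-line averaging computation above, so I would present it essentially verbatim.
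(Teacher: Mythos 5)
Your proof is correct and is exactly the standard averaging argument the paper has in mind; the paper itself omits the proof, merely remarking that the lemma ``follows easily from Maschke's Theorem,'' and also notes (as you do) that the resulting projection need not be an algebra map. Nothing further is needed.
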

Notice that the projection from the lemma is not necessarily a projection of algebras.
Next, we deal with the action of $PGL_n$. 
We will follow closely the work of Procesi (see \cite{Procesi}). We begin with the following lemma, whose proof is straightforward:
\begin{lemma}\label{invpgl}
Let $V$ be a finite dimensional rational representation of an algebraic group $G$ over $K$.
Then $G$ acts on the homogeneous affine algebra $K[V]\cong S^{\dott}(V^*)$.
The space $S^nV$ is a direct summand of $V^{\ot n}$, and therefore $K[V]^n=Hom_K(S^nV,K)$ is a direct summand of $Hom_K(V^{\ot n}\ot K)$.
The projection $Hom_K(V^{\ot n},K)\ra Hom_K(S^nV,K)$ sends $f:V^{\ot n}\ra K$ to the polynomial function $\ti{f}(v) = f(v\ot v\ot\cdots\ot v)$.
Moreover, this projection restricts to the $G$-invariant part, and so we have a surjective map $Hom_G(V^{\ot n},K)\ra Hom_G(S^nV,K)= (K[V]^n)^G$.
\end{lemma}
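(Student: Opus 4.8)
The statement to prove is Lemma~\ref{invpgl}, which asserts several things about the action of an algebraic group $G$ on the homogeneous coordinate ring $K[V] \cong S^\bullet(V^*)$ of a finite dimensional rational representation $V$. Let me sketch a proof.

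\bigskip

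The plan is to work degree by degree and to exploit the fact that in characteristic zero the symmetric power $S^n V$ splits off $V^{\ot n}$ via the idempotent $\frac{1}{n!}\sum_{\sigma \in S_n}\sigma$, which is a $G$-module map. First I would set up notation: $K[V]^n$, the degree-$n$ graded piece of $K[V]$, is by definition $\mathrm{Hom}_K(S^n V, K) = (S^n V)^*$, and since $\mathrm{char}\,K = 0$ the natural surjection $V^{\ot n} \twoheadrightarrow S^n V$ and the natural inclusion $S^n V \hookrightarrow V^{\ot n}$ (as the $S_n$-symmetric tensors) exhibit $S^n V$ as a direct summand of $V^{\ot n}$ as $G$-modules, because the symmetrizer $e_n = \frac{1}{n!}\sum_{\sigma\in S_n}\sigma$ commutes with the $G$-action and is an idempotent with image $S^n V$. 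Dualizing, $\mathrm{Hom}_K(S^n V, K)$ is a direct summand of $\mathrm{Hom}_K(V^{\ot n}, K)$ as $G$-modules, and the splitting projection $\pi_n : \mathrm{Hom}_K(V^{\ot n},K) \to \mathrm{Hom}_K(S^n V, K)$ is precomposition with the symmetrizer, i.e. $\pi_n(f) = f \circ e_n$.

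\bigskip

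Next I would identify $\pi_n(f)$ with the polynomial function described in the statement. Given $f : V^{\ot n} \to K$, the element $\pi_n(f) \in (S^n V)^* = K[V]^n$ is the homogeneous degree-$n$ polynomial function whose value at $v \in V$ is obtained by evaluating $\pi_n(f)$ on $v\cdot v\cdots v \in S^n V$ (the $n$-fold product in $S^\bullet V$, i.e. the image of $v^{\ot n}$). Since $e_n(v^{\ot n}) = v^{\ot n}$, we get $\widetilde f(v) := \pi_n(f)(v^{\ot n}) = f(e_n(v^{\ot n})) = f(v\ot v \ot\cdots\ot v)$. This is a routine unwinding of the identification $K[V] \cong S^\bullet(V^*)$ and of how a linear functional on $S^n V$ becomes a degree-$n$ polynomial function on $V$; I would state it rather than belabor it.

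\bigskip

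Finally, for the invariant statement: any $G$-module direct summand decomposition restricts to fixed points, since taking $G$-invariants is an additive (in fact exact, in the reductive/rational setting, but here additivity suffices) functor. Concretely, because $\pi_n$ is a $G$-module map and a projection, it restricts to a surjection on invariants $\mathrm{Hom}_K(V^{\ot n},K)^G = \mathrm{Hom}_G(V^{\ot n}, K) \twoheadrightarrow \mathrm{Hom}_G(S^n V, K) = (K[V]^n)^G$; surjectivity holds because $\pi_n$ already admits a $G$-equivariant section (the dual of the inclusion $S^nV \hookrightarrow V^{\ot n}$), so every $G$-invariant functional on $S^n V$ extends to a $G$-invariant functional on $V^{\ot n}$. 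Assembling over all $n$ gives the claim. There is no serious obstacle here — the whole lemma is formal consequences of characteristic zero (availability of the symmetrizer idempotent) plus the standard identification $K[V] = S^\bullet(V^*)$; the only point requiring a little care is checking that the combinatorial description $\widetilde f(v) = f(v\ot\cdots\ot v)$ really is the image under the abstractly-defined projection $\pi_n$, which is the content of the second paragraph above.
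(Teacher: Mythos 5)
Your proof is correct, and it is precisely the standard argument the paper has in mind when it calls this lemma "straightforward" and omits the proof: the characteristic-zero symmetrizer $e_n=\frac{1}{n!}\sum_{\sigma\in S_n}\sigma$ is a $G$-equivariant idempotent splitting $S^nV$ off $V^{\ot n}$, the dual projection is precomposition with $e_n$ and fixes $v^{\ot n}$, and the $G$-equivariant section gives surjectivity on invariants. Nothing is missing.
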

We thus see that a description of the invariants of $Hom_K(V^{\ot n},K)$ for all $n$ will give us the invariants in $K[V]$.
The next theorem is based on the Schur-Weyl Duality. 
It was originally proved by Procesi in order to study the invariants of the diagonal action of $PGL_n$ on $M_{n\times n}^r$ by conjugation.
To state the theorem, let $\sigma\in S_n$ be written as the product of disjoint cycles $\sigma=(i_1,i_2,\ldots)\cdots (j_1,j_2\ldots)$.
We define $T_{\sigma}:End(W)^{\ot n}\ra K$ by $$T_{\sigma}(M_1\ot M_2\ot\cdots\ot M_n) = tr(M_{i_1}M_{i_2}\cdots)\cdots tr(M_{j_1}M_{j_2}\cdots ).$$
\begin{theorem}\label{tsigma}
The linear maps $\{T_{\sigma}\}_{\sigma\in S_n}$ span the space $$Hom_{PGL(W)}(End(W)^{\ot n},K).$$
\end{theorem}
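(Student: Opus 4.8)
The plan is to use the classical Schur--Weyl duality in the form: the algebra $\mathrm{End}_{GL(W)}(W^{\ot n})$ is spanned by the permutation operators $\rho(\sigma)$ for $\sigma\in S_n$, where $\rho(\sigma)$ permutes the tensor factors. First I would recall that since $W$ is finite dimensional, $\mathrm{End}(W)\cong W\ot W^*$, so $\mathrm{End}(W)^{\ot n}\cong W^{\ot n}\ot (W^*)^{\ot n}\cong \mathrm{End}(W^{\ot n})$. Under this identification, a linear functional $f:\mathrm{End}(W)^{\ot n}\ra K$ corresponds to an element of $\mathrm{End}(W^{\ot n})^*$; and using the nondegenerate trace pairing $A\otimes B\mapsto \mathrm{tr}(AB)$ on $\mathrm{End}(W^{\ot n})$, every such functional has the form $A\mapsto \mathrm{tr}(BA)$ for a unique $B\in\mathrm{End}(W^{\ot n})$. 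The key point is then to check that $f$ is $PGL(W)$-invariant (equivalently $GL(W)$-invariant, since scalars act trivially on $\mathrm{End}(W)^{\ot n}$) if and only if the corresponding $B$ lies in $\mathrm{End}_{GL(W)}(W^{\ot n})$, because $GL(W)$ acts on $\mathrm{End}(W^{\ot n})$ by conjugation and the trace form is invariant under conjugation.

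The second step is to apply Schur--Weyl duality: $\mathrm{End}_{GL(W)}(W^{\ot n})$ is spanned by $\{\rho(\sigma):\sigma\in S_n\}$. Hence $\mathrm{Hom}_{PGL(W)}(\mathrm{End}(W)^{\ot n},K)$ is spanned by the functionals $A\mapsto \mathrm{tr}(\rho(\sigma)A)$. The final bookkeeping step is to verify that for $A = M_1\ot\cdots\ot M_n$, the number $\mathrm{tr}\big(\rho(\sigma)(M_1\ot\cdots\ot M_n)\big)$ is exactly $T_\sigma(M_1\ot\cdots\ot M_n)=\prod_{\text{cycles }(i_1 i_2\cdots)}\mathrm{tr}(M_{i_1}M_{i_2}\cdots)$. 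This is a direct computation: writing everything in a basis, $\rho(\sigma)$ sends $e_{k_1}\ot\cdots\ot e_{k_n}$ to $e_{k_{\sigma^{-1}(1)}}\ot\cdots\ot e_{k_{\sigma^{-1}(n)}}$, so composing with $M_1\ot\cdots\ot M_n$ and taking the trace forces the indices to run around each cycle of $\sigma$, producing one trace factor per cycle.

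I would also note the (easy) converse containment: each $T_\sigma$ is manifestly $PGL(W)$-invariant since traces of products are conjugation-invariant and the cyclic structure is unchanged, so $\{T_\sigma\}$ does land in $\mathrm{Hom}_{PGL(W)}(\mathrm{End}(W)^{\ot n},K)$; the content of the theorem is the spanning (surjectivity) direction, which is what the argument above delivers.

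The main obstacle is the Schur--Weyl duality input itself — the statement that the image of $K[S_n]$ in $\mathrm{End}(W^{\ot n})$ is precisely the full centralizer of the $GL(W)$-action. This is a standard theorem (it follows from the double centralizer theorem once one knows $W^{\ot n}$ is a semisimple $GL(W)$-module, valid in characteristic zero), so I would simply cite it rather than reprove it; given that, the rest is the linear-algebra translation through the trace pairing and the cycle-counting identity, both of which are routine. The one place to be slightly careful is the passage between $GL(W)$-invariance and $PGL(W)$-invariance, but since the center $K^\times\subseteq GL(W)$ acts trivially on $\mathrm{End}(W)^{\ot n}$ (each scalar $c$ acts on $\mathrm{End}(W)$ as $M\mapsto cMc^{-1}=M$), the two notions of invariance literally coincide, so this causes no real difficulty.
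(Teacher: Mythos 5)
Your proof is correct and is essentially the standard argument (trace pairing reduces the statement to Schur--Weyl duality, i.e.\ that $\mathrm{End}_{GL(W)}(W^{\ot n})$ is spanned by permutation operators), which is exactly the route the paper indicates: it states the theorem is ``based on the Schur--Weyl duality'' and cites Procesi rather than proving it. The convention-sensitive points you flag (the $GL$ versus $PGL$ issue, and the cycle-order in the trace identity, which at worst replaces $\sigma$ by $\sigma^{-1}$ and does not affect spanning) are handled correctly.
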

\end{subsection}
\end{section}

\begin{section}{The variety $X$ and the group $G$}\label{constructingX}
Let $H$ be a finite dimensional semisimple Hopf algebra over $K$.
By Larson-Radford Theorem (see \cite{LR1}) we know that 
$H^*$ is also a semisimple algebra.
The algebra $H$ is thus isomorphic (as an algebra) with $A=\oplus_iEnd(W_i)$ and the algebra $H^*$ is isomorphic (also as an algebra) with $B=\oplus_j End(V_j)$,
where $W_i$ and $V_j$ are the distinct types of irreducible representations of $H$ and of $H^*$ respectively 
(we will assume that $W_1$ is the trivial representation of $H$ and $V_1$ is the trivial representation of $H^*$).
The isomorphisms $H\cong A$ and $H^*\cong B$ induce a linear isomorphism $A^*\cong H^*\cong B$.
In the other direction, a linear isomorphism $A^*\ra B$ will induce a coalgebra structure on $B$, but for most linear isomorphisms we will not get a bialgebra structure on $B$. 

We consider the space of linear transformations $Hom_K(A^*,B)$ as the affine space $A\ot B$. 
We denote by $D$ the determinant polynomial (in order to write the determinant we need to fix a basis to $A$ and to $B$,
but in any case, $D$ is well defined up to a non-zero scalar). 
We can thus identify the Zariski open subset $(A\ot B)_D= \{T\in A\ot B| D(T)\neq 0\}$ with the set of all linear isomorphisms $A^*\ra B$.
We claim the following:
\begin{lemma}
The condition that $T\in (A\ot B)_D$ defines a bialgebra structure on $B$ for which $W_1$ and $V_1$ are the trivial representations is a closed condition.
We denote by $Y\subseteq (A\ot B)_D$ the corresponding closed subset.
The condition that $T\in Y$ defines a Hopf algebra structure is an open condition, given by the non-vanishing of a second polynomial which we denote by $an$
\end{lemma}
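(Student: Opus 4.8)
The plan is to transport all the structure along $T$ and then read off the bialgebra axioms and the Hopf condition as equalities and non-equalities between regular functions on $(A\ot B)_D$. A point $T\in(A\ot B)_D$ is a linear isomorphism $A^*\ra B$, and it induces on $B$ the coalgebra structure $\Delta_B=(T\ot T)\circ m_A^*\circ T^{-1}$ and $\ep_B=(1_A)^*\circ T^{-1}$, where $m_A^*\colon A^*\ra A^*\ot A^*$ and $(1_A)^*\colon A^*\ra K$ are the fixed structure maps of the dual coalgebra of $A$ (the coalgebra axioms for $\Delta_B,\ep_B$ hold automatically, being transported along a linear isomorphism from the linear dual of the associative unital algebra $A$). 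Equivalently one may regard $T$ as defining a comultiplication and counit on $A$ itself. The only non-polynomial ingredient is $T^{-1}$, and on the locus $D(T)\neq 0$ Cramer's rule gives $T^{-1}=D(T)^{-1}\,\mathrm{adj}(T)$ with $\mathrm{adj}(T)$ polynomial in the coordinates of $T$; hence the coordinates of $\Delta_B$ and $\ep_B$ are regular functions of $T$ on $(A\ot B)_D$, with denominator a power of $D(T)$.

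Next I would spell out the conditions. That $(B,m_B,1_B,\Delta_B,\ep_B)$ is a bialgebra means $\ep_B(1_B)=1$, $\Delta_B(1_B)=1_B\ot 1_B$, $\ep_B(xy)=\ep_B(x)\ep_B(y)$ and $\Delta_B(xy)=\Delta_B(x)\Delta_B(y)$ for all $x,y\in B$. The first two are single equations; the last two are bilinear in $(x,y)$, hence equivalent to the finitely many scalar equations obtained by letting $x,y$ run over a fixed basis of $B$. Substituting the formulas above and clearing $D(T)$, each of these becomes a polynomial identity in the coordinates of $T$, so the bialgebra locus is closed in $(A\ot B)_D$. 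The normalisation ``$W_1$ and $V_1$ are the trivial representations'' says that $\ep_B$ equals the fixed projection $B\ra\mathrm{End}(V_1)=K$ and, symmetrically, that the transported counit on $A$ equals the fixed projection $A\ra\mathrm{End}(W_1)=K$; after one multiplication by $T$ these are plainly linear equations in $T$. The common zero set of all these equations in $(A\ot B)_D$ is the desired closed set $Y$.

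For the Hopf condition I would invoke the definition recalled in Section \ref{prelim}: a bialgebra $B$ is a Hopf algebra if and only if $\mathrm{id}_B$ is invertible in the convolution algebra $(\mathrm{Hom}_K(B,B),\star)$. For $T\in Y$ form the linear endomorphism $R_T$ of the $(\dim B)^2$-dimensional space $\mathrm{Hom}_K(B,B)$ given by $R_T(\phi)=\mathrm{id}_B\star\phi=m_B\circ(\mathrm{id}_B\ot\phi)\circ\Delta_B$; its matrix entries are regular functions of $T$, since they depend on $T$ only through $\Delta_B$. In a finite-dimensional associative algebra an element $a$ is invertible if and only if left multiplication $L_a$ is bijective, equivalently $\det L_a\neq 0$; hence $\mathrm{id}_B$ is $\star$-invertible if and only if $\det R_T\neq 0$. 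Writing the regular function $\det R_T$ on $(A\ot B)_D$ as $an(T)/D(T)^N$ for a polynomial $an$ and some $N\ge 0$, the Hopf condition on $Y$ is exactly the open condition $an(T)\neq 0$. I do not expect a genuine obstacle here: the only subtlety is the ubiquitous $T^{-1}$, which is handled uniformly by the substitution $T^{-1}=\mathrm{adj}(T)/D(T)$ followed by clearing denominators; beyond that one need only recall that ``$\Delta_B,\ep_B$ are algebra maps'' are bilinear conditions, so finitely many polynomial equations suffice, and that invertibility in a finite-dimensional algebra is detected by a single determinant.
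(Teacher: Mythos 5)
Your proof is correct and follows essentially the same route as the paper: transport the coalgebra structure along $T$, clear the denominators $D(T)$ coming from $T^{-1}$ to turn the unit/counit normalisations and the compatibility $\Delta_B(xy)=\Delta_B(x)\Delta_B(y)$ into polynomial equations, and detect the existence of the antipode by the non-vanishing of a determinant. The only immaterial difference is in the last step: the paper realises convolution-invertibility of $\mathrm{id}$ as invertibility of an element of the sum of matrix algebras $A\ot B$, whereas you take the determinant of the left convolution-multiplication operator on $\mathrm{Hom}_K(B,B)$; these yield the same open condition.
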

\begin{proof}
For $T$ to define a bialgebra structure for which $W_1$ and $V_1$ are the trivial representations, 
we need that $T(\ep_A) = 1_B$, $T^*(\ep_B)=1_A$ (where $T^*:B^*\ra A$ is the dual map), and we need the Hopf axiom to hold in $B$.
The first two conditions are affine equations on $T$, and are therefore clearly closed.
The last condition can be written as an equality between two linear endomorphisms of $B\ot B$:
$$(m_B\ot m_B)T^{\ot 4}(1\ot \tau\ot 1)(\Delta_{A^*}\ot\Delta_{A^*})(T^{-1}\ot T^{-1}) = T^{\ot 2}\Delta_{A^*}T^{-1}m_B$$
where $m_B$ is the multiplication in $B$, $\Delta_{A^*}$ is the comultiplication in $A^*$ (which is the dual of the multiplicative structure of $A$) and $\tau:A^*\ot A^*\ra A^*\ot A^*$ is the natural flip operation.
If we fix a basis for $A$ and for $B$, the entries of $T^{-1}$ can be written as a rational function in the entries of $T$ (with denominators of the form $D^n$),
and the last equation becomes a polynomial equation on the entries of $T$ (once we multiply be a high enough power of $D$).
A bialgebra $H$ is a Hopf algebra if and only if the identity $Id:H\ra H$ is convolution invertible.
For a finite dimensional Hopf algebra, this means that $Id\in H\ot H^*$ should be invertible in the tensor product algebra.
This translates to the fact that a linear isomorphism $T\in A\ot B$ which defines a bialgebra structure 
will define a Hopf algebra structure if and only if it is invertible when considered as an element of $A\ot B$.
But since both $A$ and $B$ are sums of matrix algebras, this can be written as the non-vanishing of a polynomial $an(T)$.
\end{proof}
We thus get a subvariety $X\subseteq A\ot B$ of all linear isomorphisms which define a Hopf algebra structure on $B$.
By the last lemma, $K[X]\cong (K[A\ot B]/I)_{an,D}$ where $I$ is the radical of the ideal generated by the closed conditions in the lemma.
We next ask when do two points in $X$ define isomorphic Hopf algebra structures.
To answer this question, we introduce the group $G=Aut_{alg}(A,\ep_A)\times Aut_{alg}(B,\ep_B)$
(by $Aut_{alg}(A,\ep_A)$ we mean all the algebra automorphisms of $A$ which fix the one dimensional trivial character $\ep_A$, and similarly for $B$).
The group $G$ acts on $A\ot B$ in a natural way. It stabilizes the subvariety $X$, and we claim the following:
\begin{lemma}
Two points $T_1,T_2\in X$ will define isomorphic Hopf algebra structures if and only if they belong to the same $G$-orbit.
\end{lemma}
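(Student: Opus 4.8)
The plan is to show that the correspondence between points of $X$ and Hopf algebra structures on $B$ (with $W_1$, $V_1$ being the trivial representations) is, up to the $G$-action, a bijection with isomorphism classes of such Hopf algebras. The key observation is that a point $T\in X$ should be thought of as carrying two pieces of data: it equips $B$ with a coalgebra structure (the dual of the fixed multiplication on $A$, transported via $T$), and together with the fixed multiplication on $B$ this makes $B$ into a Hopf algebra $H_T$. Simultaneously, since $T:A^*\to B$ is a linear isomorphism and $A^*\cong B$ as... more precisely, $T$ endows $A$ with a Hopf algebra structure as well, namely $A\cong H_T^*$ via $T^*$. So each $T$ really records a Hopf algebra structure on the fixed algebra $B$ whose dual has underlying algebra $A$.

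First I would prove the ``if'' direction, which is the easy one: if $(g,h)\in G = Aut_{alg}(A,\ep_A)\times Aut_{alg}(B,\ep_B)$ and $T_2 = (g,h)\cdot T_1$ (explicitly, $T_2 = h\circ T_1\circ (g^*)^{-1}$ or whatever the precise formula for the action is), then $h:B\to B$ is by construction an algebra automorphism, and one checks directly that it intertwines the two coalgebra structures $\Delta_{T_1}$ and $\Delta_{T_2}$ because these are obtained from the multiplication on $A$ twisted by $T_1$ and $T_2=h T_1 (g^*)^{-1}$, and $g$ is an algebra automorphism of $A$ so dualizes to a coalgebra automorphism of $A^*$. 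Hence $h$ is a bialgebra, and therefore (antipodes are unique) a Hopf algebra, isomorphism $H_{T_1}\xrightarrow{\sim} H_{T_2}$. This is a routine diagram chase using the compatibility axiom (\ref{compatib}).

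For the ``only if'' direction, suppose $\phi: H_{T_1}\xrightarrow{\sim} H_{T_2}$ is a Hopf algebra isomorphism. Both Hopf algebras have underlying algebra $B$, so $\phi$ is in particular an algebra automorphism of $B$; I must also check it fixes $\ep_B$, which holds because a Hopf algebra map preserves the counit and $\ep_B$ is the counit of both structures (equivalently, $\phi$ preserves the trivial representation $V_1$, which is part of the rigidified data). Thus $\phi\in Aut_{alg}(B,\ep_B)$. Similarly, $\phi^*: H_{T_2}^*\to H_{T_1}^*$ is a Hopf algebra isomorphism; transporting through the identifications $H_{T_k}^*\cong A$ (as algebras, via the fixed multiplication on $A$), $\phi^*$ becomes an algebra automorphism $\psi$ of $A$ fixing $\ep_A$, so $\psi\in Aut_{alg}(A,\ep_A)$. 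One then verifies that the pair $(\psi,\phi)\in G$ sends $T_1$ to $T_2$: this is exactly the statement that $\phi$ commuting with multiplication and comultiplication on both sides translates into the equivariance identity $T_2 = \phi\circ T_1\circ \psi^{*-1}$ of the defining linear maps. I expect the main technical point — the only place that needs care rather than bookkeeping — to be making sure the two halves of the element $(\psi,\phi)\in G$ are genuinely compatible, i.e. that the $\psi$ extracted from $\phi^*$ and the $\phi$ we started with are related by precisely the formula encoding the $G$-action on $A\otimes B$; this amounts to checking that the isomorphism $Hom_K(A^*,B)\cong A\otimes B$ is $G$-equivariant for the natural actions, and that transport of structure along $T$ is compatible with dualization. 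Finally, I would remark that both $\psi$ and $\phi$ automatically respect the algebra structures on all of $A$ and $B$ (not merely the trivial characters), since they arose from bialgebra isomorphisms; the only constraint we impose in defining $G$, fixing $\ep_A$ and $\ep_B$, is the one that genuinely cuts down to the orbits we want, because every Hopf algebra isomorphism preserves the trivial (co)representation.
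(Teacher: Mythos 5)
Your proposal is correct and follows essentially the same route as the paper: given a Hopf isomorphism $\beta$ of the two structures on $B$, the paper forms $\alpha^*=T_2^{-1}\circ\beta\circ T_1$ and observes it is a coalgebra automorphism of $A^*$ (equivalently, an algebra automorphism of $A$), which is exactly the map you obtain by transporting $\phi^*$ through the identifications $H_{T_k}^*\cong A$. Your extra care about the counit conditions and the reverse implication is sound but amounts to the bookkeeping the paper leaves implicit.
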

\begin{proof}
Assume that $T_1,T_2:A^*\ra B$ define isomorphic Hopf algebra structures on $B$.
We will denote the two structures by $B_1$ and $B_2$ respectively.
We thus have a Hopf algebra isomorphism $\beta:B_1\ra B_2$. 
This means that $\beta$ is an automorphism of $B$ as an algebra, and that the linear isomorphism
$$\alpha^*: A^*\stackrel{T_1}{\ra}B_1\stackrel{\beta}{\ra} B_2\stackrel{T_2^{-1}}{\ra}A^*$$
is a coalgebra automorphism (or, alternatively, that the dual map $\alpha:A\ra A$ is an algebra automorphism).
But this is equivalent to the equation $(\alpha^{-1},\beta)(T_1) = T_2$. Since $(\alpha^{-1},\beta)\in G$, we are done.
\end{proof}
The next lemma tells us why we can apply Geometric Invariant Theory to study the orbit space $X/G$:
\begin{lemma}
The stabilizer of each point in $X$ is finite, and therefore all the orbits are closed. 
\end{lemma}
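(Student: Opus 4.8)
The plan is to pin down the stabilizer of an arbitrary point $T\in X$, identify it with the group of Hopf algebra automorphisms of the Hopf algebra that $T$ defines, and then invoke Radford's finiteness theorem; the closedness of the orbits is then immediate from the dimension count recorded right after Theorem \ref{GIT}. First I would fix $T\in X$, write $B_T$ for the Hopf algebra structure it defines on $B$, and recall that $G=Aut_{alg}(A,\ep_A)\times Aut_{alg}(B,\ep_B)$ acts on $A\ot B=Hom_K(A^*,B)$ by $(\gamma,\beta)\cdot T=\beta\circ T\circ\gamma^*$. Retracing the computation in the proof of the preceding lemma in the case $T_1=T_2=T$, a pair $(\gamma,\beta)$ fixes $T$ exactly when $\beta$ is a Hopf algebra automorphism of $B_T$ and $\gamma^*=T^{-1}\beta^{-1}T$; since the coalgebra structure of $B_T$ is transported from that of $A^*$ along $T$, the map $T^{-1}\beta^{-1}T$ is a coalgebra automorphism of $A^*$, so $\gamma$ is an algebra automorphism of $A$, and using the defining identities $T(\ep_A)=1_B$ and $T^*(\ep_B)=1_A$ one checks that $\gamma$ fixes $\ep_A$ and that $\beta$ fixes $\ep_B$. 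Thus $(\gamma,\beta)\mapsto\beta$ is an isomorphism of groups from the stabilizer of $T$ onto $Aut_{Hopf}(B_T)$; for what follows only injectivity, i.e.\ the fact that $\gamma$ is uniquely determined by $\beta$, is needed.

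Next I would conclude. The Hopf algebra $B_T$ is finite dimensional and semisimple over the algebraically closed field $K$ of characteristic zero, so $Aut_{Hopf}(B_T)$ is finite by Radford's theorem (\cite{Radford-aut}); hence the stabilizer of $T$ is finite. Since this holds for every $T\in X$, the remark following Theorem \ref{GIT} applies: every orbit has dimension $\dim G$, and since the boundary $\overline{G\cdot T}\setminus(G\cdot T)$ is a union of orbits of strictly smaller dimension it must be empty, so every orbit is closed.

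The step I expect to be the real obstacle is the external input that the Hopf automorphism group of a finite dimensional semisimple Hopf algebra is finite; once this is granted, the rest is the bookkeeping above together with the standard orbit-stabilizer dimension formula. A self-contained argument would instead have to show that the connected component of the algebraic group $Aut_{Hopf}(B_T)$ is trivial --- for instance by observing that any Hopf automorphism permutes the central primitive idempotents and fixes the normalized two-sided integral, and then ruling out a nontrivial one-parameter subgroup --- but here I would simply cite the result.
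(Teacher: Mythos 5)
Your proof is correct and follows essentially the same route as the paper: identify the stabilizer of $T$ with $Aut_{Hopf}$ of the Hopf algebra it defines, cite Radford's finiteness theorem, and conclude closedness of orbits from the fact that all orbits have the same dimension. The extra bookkeeping you supply (the explicit isomorphism $(\gamma,\beta)\mapsto\beta$ and the verification that $\gamma$ is determined by $\beta$) is a welcome elaboration of what the paper leaves implicit, but it is not a different argument.
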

\begin{proof}
If $T\in X$ defines a Hopf algebra $H$, then we can identify between the stabilizer of $T$ and the group $Aut_{Hopf}(H)$ of all Hopf automorphisms of $H$.
Radford proved in \cite{Radford-aut} that this group is finite when $H$ is semisimple and $K$ is of characteristic zero.
Thus, the dimensions of all the orbits is the same as the dimension of $G$, and they are all closed. 
\end{proof}
\begin{remark}
We know, by a theorem of Stefan (see \cite{Stefan}), that the number of orbits of $G$ in $X$ is finite. 
\end{remark}
Finally, we give an explicit description of the group $G$:
\begin{lemma}
 The group $G$ is reductive and fits into a split short exact sequence of the form:
 $$1\ra \ti{G}\ra G\ra \prod_i S_{n_i}\ra 1$$
 where $\ti{G}=\prod_i PGL(W_i)\times\prod_jPGL(V_j)$.
\end{lemma}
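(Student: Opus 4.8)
The plan is to handle the two tensor factors of $G=Aut_{alg}(A,\ep_A)\times Aut_{alg}(B,\ep_B)$ separately and then take the product, using the classical description of the automorphism group of a semisimple algebra. First I would recall the Skolem--Noether theorem: every algebra automorphism of a full matrix algebra $End(W)$ is inner, and two elements of $GL(W)$ induce the same inner automorphism precisely when they differ by a scalar, so $Aut_{alg}(End(W))\cong PGL(W)$. For $A=\oplus_i End(W_i)$, any algebra automorphism permutes the minimal two-sided ideals $End(W_i)$ and can only interchange two of them when the corresponding blocks have equal dimension; conversely, fixing bases of the $W_i$ produces, for every permutation of equal-dimensional blocks, an explicit automorphism of $A$ realizing it. This yields a split short exact sequence
$$1\ra \prod_i PGL(W_i)\ra Aut_{alg}(A)\ra \prod_d S_{m_d}\ra 1,$$
where $m_d=\#\{\,i:\dim W_i=d\,\}$, the kernel consisting of the automorphisms fixing each block and the splitting being given by block-permutation automorphisms.

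Next I would impose the condition of fixing $\ep_A$. Since $W_1$ is one-dimensional, $\ker\ep_A=\oplus_{i\ge 2}End(W_i)$ is a two-sided ideal, and an automorphism $\phi$ satisfies $\ep_A\circ\phi=\ep_A$ if and only if $\phi(\ker\ep_A)=\ker\ep_A$, i.e.\ if and only if $\phi$ maps the block $End(W_1)$ to itself; on that block, which is just $K$, $\phi$ is then automatically the identity, and $PGL(W_1)$ is trivial anyway. Hence $Aut_{alg}(A,\ep_A)$ is exactly the preimage of the stabilizer of the index $1$ in $\prod_d S_{m_d}$, and restricting the sequence above gives a split exact sequence with the same kernel $\prod_i PGL(W_i)$ and quotient a product of symmetric groups (the factor $S_{m_1}$ being replaced by $S_{m_1-1}$); the section still consists of block permutations, which fix $\ep_A$ once they fix block $1$. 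The identical argument applies to $B=\oplus_j End(V_j)$ and $\ep_B$.

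Taking the direct product of the two resulting sequences gives the asserted split short exact sequence with $\ti G=\prod_i PGL(W_i)\times\prod_j PGL(V_j)$ and quotient a product of symmetric groups, which I relabel as $\prod_i S_{n_i}$. For reductivity, note that $\ti G$ is connected (a product of groups $PGL$) and reductive (in fact semisimple), hence $\ti G=G^{\circ}$ because $G/\ti G$ is finite; the unipotent radical of $G$ is connected, contained in $G^{\circ}$, and normal and unipotent in $G^{\circ}$, hence trivial, so $G$ is reductive.

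The bookkeeping with block permutations — in particular checking that they form a subgroup mapping isomorphically onto the quotient, which provides the splitting — is routine. The only point that needs a little attention is the reduction to the trivial block: one must verify both that "$\phi$ fixes $\ep_A$" is equivalent to "$\phi$ fixes $End(W_1)$ setwise" and that the chosen splitting is compatible with passing to this subgroup. I do not anticipate any genuine obstacle.
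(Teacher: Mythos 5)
Your proof is correct and follows essentially the same route as the paper: Skolem--Noether identifies the block-preserving (equivalently, center-fixing) automorphisms with $\prod_i PGL(W_i)\times\prod_j PGL(V_j)$, equal-dimensional blocks may be permuted, giving the symmetric-group quotient split by basis-dependent block permutations, and reductivity follows because $G$ is a finite extension of a product of projective general linear groups. You supply more detail than the paper on the effect of fixing $\ep_A$ and $\ep_B$ (namely that it forces the trivial blocks to be fixed, shrinking one symmetric-group factor), which is a worthwhile clarification but not a different argument.
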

\begin{proof}
An algebra automorphism of $A$ will permute the representations of $A$ of the same dimension, and similarly for $B$.
This gives us the surjective homomorphism $G\ra \prod_i S_{n_i}$.
The kernel $\ti{G}$ of this homomorphism will be all the automorphisms which fix the centers of $A$ and of $B$.
By Skolem-Noether Theorem, we know that all such automorphisms are given by conjugation, and therefore we have that $\ti{G}=\prod_i PGL(W_i)\times\prod_jPGL(V_j)$ indeed.
By choosing specific bases for the vector spaces $W_i$ and $V_j$ it is easy to describe a splitting of the surjection $G\ra \prod_i S_{n_i}$.
Finally, since projective general linear groups are reductive, and direct products and finite extensions of reductive groups are again reductive (since the ground field is of characteristic zero), the group $G$ is reductive as well.
\end{proof}
We thus see that two points $T_1$ and $T_2$ in $X$ will be in the same orbit under the action of $\ti{G}$ if and only if there is an isomorphism between the resulting Hopf algebras 
such that the isomorphism between $A$ and $B$ permutes the irreducible representations of $A$ and of $B$ trivially.
In other words, we have the following corollary:
\begin{corollary}\label{orbitstilde}
The orbits of $\ti{G}$ in $X$ correspond to isomorphism types of tuples $(H,W_1,\ldots W_c,V_1\ldots V_d)$ where $H$ is a Hopf algebra, $W_i$ are the irreducible representations of $H$ and $V_j$ are the irreducible representations of $H^*$.
\end{corollary}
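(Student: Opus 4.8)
The plan is to derive this from the lemma describing the $G$-orbits, together with the explicit form $\ti{G}=\prod_i PGL(W_i)\times\prod_j PGL(V_j)$ and the Skolem--Noether theorem; indeed the corollary is essentially a restatement of the sentence preceding it, so almost all the work lies in making the identifications precise. Recall that a point $T\in X$ makes $B$ into a Hopf algebra $B_T$ and hence determines a Hopf algebra $H_T:=(B_T)^*$; moreover $T$ supplies a canonical algebra isomorphism $H_T\cong A$ (essentially $T^*$), while $H_T^*=B_T$ is literally $B$ as an algebra, so $T$ canonically labels the irreducible representations of $H_T$ by $W_1,\dots,W_c$ and those of $H_T^*$ by $V_1,\dots,V_d$. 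I take an isomorphism of tuples $(H,W_\bullet,V_\bullet)\to(H',W'_\bullet,V'_\bullet)$ to be a Hopf algebra isomorphism $\phi\colon H\to H'$ such that the bijections it induces on isomorphism classes of simple $H$-modules and of simple $H^*$-modules carry the label of $W_i$ to that of $W'_i$ and of $V_j$ to that of $V'_j$ for all $i,j$. What must be shown is that $T_1$ and $T_2$ lie in a single $\ti{G}$-orbit precisely when the tuples attached to them are isomorphic, and that every isomorphism type of tuple arises this way.

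For the equivalence I would argue as follows. Suppose first that $(\alpha,\beta)\in\ti{G}$ carries $T_1$ to $T_2$. Then, by the proof of the orbit lemma, $\beta$ is a Hopf isomorphism $B_{T_1}\to B_{T_2}$, and transported through the canonical identifications it gives a Hopf isomorphism $\phi\colon H_{T_1}\to H_{T_2}$ whose effect on the labels of the $W_i$ is governed by $\alpha$ and on the labels of the $V_j$ by $\beta$. Since $(\alpha,\beta)\in\ti{G}$, the automorphism $\alpha$ is inner within each simple block of $A$ and $\beta$ likewise on $B$; hence each preserves the block decomposition and induces the identity on isomorphism classes of simple modules (an inner automorphism twists any module to an isomorphic one). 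So $\phi$ respects the labelings, i.e.\ it is an isomorphism of tuples. Conversely, if $\phi$ is an isomorphism of the two tuples, the orbit lemma produces $(\alpha,\beta)\in G$ with $(\alpha^{-1},\beta)(T_1)=T_2$; respecting the labels of the $W_i$ forces $\alpha$ to map each simple block of $A$ to itself, so $\alpha$ restricts to an algebra automorphism of each $End(W_i)$, which is inner by Skolem--Noether, and likewise $\beta$ restricts to an inner automorphism of each $End(V_j)$. Thus $(\alpha,\beta)\in\prod_i PGL(W_i)\times\prod_j PGL(V_j)=\ti{G}$, so $T_1$ and $T_2$ lie in one $\ti{G}$-orbit.

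For surjectivity I would observe that an arbitrary semisimple Hopf algebra $H$ with $H\cong A$ and $H^*\cong B$ as algebras, together with a labeling of the irreducibles of $H$ and of $H^*$, yields a point of $X$: choose algebra isomorphisms $\gamma\colon H\to A$ and $\delta\colon H^*\to B$ realizing the labelings (and sending the trivial representations to $W_1$ and $V_1$), set $T:=\delta\circ\gamma^*$, and check that $\delta$ then becomes a Hopf isomorphism $H^*\to B_T$, so that the tuple attached to $T$ is the one we started from. I do not expect a genuine obstruction anywhere in this argument; the only delicate point is the bookkeeping of the canonical identifications of $H_T$ with $A$ and of $H_T^*$ with $B$, which is what lets one see that a Hopf isomorphism respects the labelings exactly when the corresponding $\alpha$ and $\beta$ preserve every simple block --- and that is precisely the condition cutting out $\ti{G}=\ker\bigl(G\to\prod_i S_{n_i}\bigr)$ inside $G$.
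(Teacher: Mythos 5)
Your proposal is correct and follows the same route the paper takes: the paper simply asserts the corollary as a restatement of the sentence describing $\ti{G}$ as the kernel of $G\ra\prod_i S_{n_i}$, combined with the lemma characterizing $G$-orbits, and your argument is exactly the careful unpacking of that (inner automorphisms fix isomorphism classes of simples in one direction, Skolem--Noether in the other, plus the routine surjectivity check). Nothing is missing.
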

\end{section}
\begin{section}{The $\ti{G}$ invariants in $K[X]$}\label{tiG-invariants}
We will study the $G$-invariants in $K[X]$ in two steps.
In this section we will concentrate on the $\ti{G}$-invariants, 
and in the next section we will study the action of the finite group $G/\ti{G}$ on $K[X]^{\ti{G}}$.

Recall first that we have
$K[X] = (K[A\ot B]/I)_{an,D}$, and the action of $G$ on $X$ is induced from a linear action of $G$ on $A\ot B$.
The ideal $I$ is $G$-stable, and the polynomials $an$ and $D^2$ are $G$-invariants.
Since the group $G$ is reductive, the exactness of the sequence of $G$-maps $$0\ra I\ra K[A\ot B]\ra K[A\ot B]/I\ra 0$$
implies the exactness of the sequence $$0\ra I^G\ra K[A\ot B]^G\ra (K[A\ot B]/I)^G\ra 0.$$
In other words, the natural map $K[A\ot B]^G/I^G\ra (K[A\ot B]/I)^G$ is an isomorphism.
We then have $$K[X]^G\cong (K[A\ot B]^G/I^G)_{an,D^2}.$$
We summarize this in the following lemma:
\begin{lemma}
The algebra $K[X]^G$ is generated by the image of the restriction map from the algebra $K[A\ot B]^G$ 
together with $an^{-1}$ and $D^{-2}$.
\end{lemma}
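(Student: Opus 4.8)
The plan is to derive the lemma from the presentation $K[X]\cong (K[A\ot B]/I)_{an,D}$ established above, together with two standard features of the linearly reductive group $G$ in characteristic zero: that the functor $M\mapsto M^G$ is exact on rational $G$-modules, and that it commutes with localization at $G$-invariant elements. The first is the Reynolds operator, which splits $M=M^G\oplus M'$ functorially in $M$; the second follows by writing a localization at $s\in R^G$ as the colimit of $R\xrightarrow{s}R\xrightarrow{s}\cdots$ and observing that $M^G$, being the kernel of the coaction map $M\ra M\ot_K K[G]$, $m\mapsto \rho(m)-m\ot 1$, commutes with filtered colimits (here $s$ being $G$-invariant is what makes the transition maps $G$-equivariant).

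First I would replace $D$ by $D^2$: since $D$ is an honest element of $K[A\ot B]/I$, inverting $D$ and inverting $D^2$ yield the same ring, so $K[X]\cong (K[A\ot B]/I)_{an,D^2}$, and now the multiplicative set is generated by $an$ and $D^2$, both of which lie in $(K[A\ot B]/I)^G$. (This forced passage to $D^2$ is the one point of genuine content: the finite quotient $\prod_i S_{n_i}$ of $G$ acts on $D$ through a sign character, so only $D^2$ is $G$-invariant.) Next, applying exactness of $(-)^G$ to $0\ra I\ra K[A\ot B]\ra K[A\ot B]/I\ra 0$ yields the isomorphism $K[A\ot B]^G/I^G\stackrel{\sim}{\ra}(K[A\ot B]/I)^G$, where $I^G=I\cap K[A\ot B]^G$. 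Then, applying the localization property to the $G$-invariant elements $an$ and $D^2$, I obtain
$$K[X]^G=\bigl((K[A\ot B]/I)_{an,D^2}\bigr)^G\cong\bigl((K[A\ot B]/I)^G\bigr)_{an,D^2}\cong\bigl(K[A\ot B]^G/I^G\bigr)_{an,D^2}.$$
Unravelling this last isomorphism shows that every $G$-invariant regular function on $X$ is a polynomial in $an^{-1}$, $D^{-2}$, and the restrictions to $X$ of the $G$-invariant polynomial functions on $A\ot B$ — which is exactly the assertion.

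The step I expect to be the main obstacle is the commutation of $(-)^G$ with the two localizations; everything else is formal bookkeeping with the defining exact sequence of $I$ and with the definition of $K[X]$, and it is precisely linear reductivity in characteristic zero that makes this work. If one prefers to quote rather than reprove this, a clean alternative is to invoke the standard fact that for a reductive $G$ acting on an affine variety $Z$ and $f\in K[Z]^G$ one has $K[Z_f]^G\cong(K[Z]^G)_f$, applied with $Z=\mathrm{Spec}(K[A\ot B]/I)$ and $f=an\cdot D^2$, and then combine it with the exactness of $(-)^G$ exactly as above.
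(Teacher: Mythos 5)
Your proposal is correct and follows essentially the same route as the paper: exactness of $(-)^G$ for the reductive group $G$ applied to $0\ra I\ra K[A\ot B]\ra K[A\ot B]/I\ra 0$, combined with the $G$-invariance of $an$ and $D^2$ and the compatibility of invariants with localization at invariant elements. The only difference is that you spell out the localization step and the reason one must pass from $D$ to $D^2$, both of which the paper leaves implicit.
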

The lemma holds also if we replace $G$ by $\ti{G}$.

In order to find a generating set for $K[X]^{\ti{G}}$ 
it is therefore enough to find a generating set for $K[A\ot B]^{\ti{G}}$.
By lemma \ref{invpgl}, it is enough to study the spaces $Hom_{\ti{G}}((A\ot B)^{\ot n},K)$
(where $n$ is some natural number).
The vector space $A\ot B$ splits as the direct sum of the subspaces $End(W_i)\ot End(V_j)$.
We use the fact that if we have two algebraic groups $G_1$ and $G_2$ acting on 
finite dimensional vector spaces $V_1$ and $V_2$ respectively, then $G_1\times G_2$ acts 
on $V_1\ot  V_2$ in a natural way, and we have a natural isomorphism
\begin{equation}\label{product1}Hom_{G_1\times G_2}(V_1\ot V_2,K)\cong Hom_{G_1}(V_1,K)\ot Hom_{G_2}(V_2,K).\end{equation}
The space $Hom_{\ti{G}}((A\ot B)^{\ot n},K)$ is isomorphic with the direct sum of spaces of the form
$$Hom_{\ti{G}}(End(W_{i_1})\ot End(V_{j_1})\ot\cdots\ot End(W_{i_n})\ot End(V_{j_n}),K).$$
After rearranging the tensor factors we get that this is isomorphic with the following direct sum of all spaces of the form
$$Hom_{\ti{G}}(\bigotimes_{i}End(W_i)^{\ot a_i}\ot 
	\bigotimes_j End(V_j)^{\ot b_j},K)$$
where $\sum_i a_i = \sum_j b_j = n$. 
But this space can be split by using Equation \ref{product1}. 
It is isomorphic with the tensor product
$$\bigotimes_iHom_{PGL(W_i)}(End(W_i)^{\ot a_i},K)\ot \bigotimes_j Hom_{PGL(V_j)}(End(V_j)^{\ot b_j},K).$$
Theorem \ref{tsigma} gives us a description of these spaces.
Indeed, the vector space $Hom_{PGL(W)}(End(W)^{\ot a},K)$ will be spanned by the linear transformations 
$\{T_{\sigma}\}_{\sigma\in S_a}$, where $T_{\sigma}$ is described at the end of Section \ref{prelim}.

We will give now an alternative description of the transformations $T_{\sigma}$.
This will give us a neater description of the generators of $K[X]^{\ti{G}}$.
The transformation $T_{\sigma}$ is constructed using $Tr_W\in End(W)^*$.
If the cycle lengths of $\sigma$ are $c_1,\ldots c_r$, then $T_{\sigma}$ can be described in the following way:
take $Tr_W^{\ot r}$, apply to it $\Delta^{c_1-1}\ot\cdots\ot \Delta^{c_r-1}$
where $\Delta:End(W)^*\ra End(W)^*\ot End(W)^*$ is the dual of the multiplication on $End(W)$,
and apply some permutation on the tensor factors of this result.
This will give us the element $T_{\sigma}$ in $(End(W)^*)^{\ot a}$.

If we trace this back to $K[A\ot B]^{\ti{G}}_n$, we get the following spanning set:
write $\psi_i$ for the the character of $W_i$ and $\phi_j$ for the character of $V_j$ as in Section \ref{prelim}.
Take a tensor product of characters $\psi_{i_1}\ot\psi_{i_2}\ot\cdots\ot \phi_{j_1}\ot\cdots\phi_{j_r}$,
apply to the different tensor factors repeatedly the comultiplications of the coalgebras $End(W_i)^*$ and $End(V_j)^*$ until we get an element in $Hom_{K}((A\ot B)^{\ot n},K)$ and apply a permutation in $S_n\times S_n$ on the result. 
Then the resulting elements are $\ti{G}$-invariant, 
and all the $\ti{G}$-invariant elements are spanned by them.

In order to get the desired invariant polynomial, we just need to evaluate these transformations on $T^{\ot n}\in (A\ot B)^{\ot n}$. This also gives us a concrete description of these invariants in
Hopf algebraic terms. If $T\in A\ot B$ is a point in $X$ which gives us a Hopf algebra structure on $B$,
then we can consider $T$ as the identification between $A^*$ and $B$.
Evaluating $f\ot g\in A^*\ot B^*$ on $T$, will then be the same as $g(f)$
where we identify $f$ with its image in $B$ via $T$.

Therefore, a spanning set for $K[A\ot B]^{\ti{G}}_n$
can be described in the following way:
take a tensor product of characters of $A$ and of $B$, apply the comultiplication 
repeatedly, until we get an element in $(A^*)^{\ot n}\ot (B^*)^{\ot n}$, apply a permutation in $S_n$ to $(A^*)^{\ot n}$
and pair the result with $T^{\ot n}$. We call the resulting invariant
a \textit{basic $\ti{G}$-invariant}. 
Notice that we allow here also reducible characters. 
This will make it easier for us to define $G$-basic invariants in the next section.

By using the isomorphisms $H\cong A$ and $H^*\cong B$, and comparing to Definition \ref{invs}, we see that 
the basic $\ti{G}$-invariants are the same as the $(0,0)$-character basic invariants. 
We thus have the following proposition, which, together with Corollary \ref{orbitstilde} and Theorem \ref{GIT} finishes the proof of Theorem \ref{tildeGinv} 
(see also the proof of Theorem \ref{connection}).
\begin{proposition}
The algebra $K[X]^{\ti{G}}$ is generated by by the basic $\ti{G}$-invariants, up to a localization by $an^{-1}$ and $D^{-2}$.
\end{proposition}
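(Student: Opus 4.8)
The plan is to assemble the reductions already carried out in this section. By the lemma above (applied with $\ti{G}$ in place of $G$), the algebra $K[X]^{\ti{G}}$ is generated by the image of $K[A\ot B]^{\ti{G}}$ under restriction, together with $an^{-1}$ and $D^{-2}$; so it suffices to show that $K[A\ot B]^{\ti{G}}$ is spanned by the basic $\ti{G}$-invariants. The action of $\ti{G}$ on $A\ot B$ is linear, so the invariant ring is graded and it is enough to treat each homogeneous component $K[A\ot B]^{\ti{G}}_n$ separately. First I would invoke Lemma \ref{invpgl} with $V=A\ot B$: the restitution map $Hom_{\ti{G}}((A\ot B)^{\ot n},K)\ra K[A\ot B]^{\ti{G}}_n$, $f\mapsto\big(v\mapsto f(v^{\ot n})\big)$, is surjective. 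Since the basic $\ti{G}$-invariants are, as elements of $K[A\ot B]^{\ti{G}}_n$, precisely the restitutions (pairings with $T^{\ot n}$) of the multilinear $\ti{G}$-invariants built from characters via comultiplications and permutations, it is enough to produce a spanning set of $Hom_{\ti{G}}((A\ot B)^{\ot n},K)$ consisting of such transformations.

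Next I would decompose this space. Writing $A\ot B=\bigoplus_{i,j}End(W_i)\ot End(V_j)$, the tensor power $(A\ot B)^{\ot n}$ splits as the direct sum, over $n$-tuples $\big((i_1,j_1),\dots,(i_n,j_n)\big)$, of $\bigotimes_{k=1}^{n}\big(End(W_{i_k})\ot End(V_{j_k})\big)$. Each summand is $\ti{G}$-stable because $\ti{G}$ acts block-diagonally, so $Hom_{\ti{G}}((A\ot B)^{\ot n},K)$ is the direct sum of the spaces of functionals on the individual summands. Rearranging tensor factors identifies each summand, $\ti{G}$-equivariantly, with $\bigotimes_i End(W_i)^{\ot a_i}\ot\bigotimes_j End(V_j)^{\ot b_j}$ for suitable $a_i,b_j$ with $\sum_i a_i=\sum_j b_j=n$. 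As $\ti{G}=\prod_i PGL(W_i)\times\prod_j PGL(V_j)$ and each factor acts nontrivially only on its own block, iterating the product formula (Equation \ref{product1}) gives
$$Hom_{\ti{G}}\Big(\bigotimes_i End(W_i)^{\ot a_i}\ot\bigotimes_j End(V_j)^{\ot b_j},\,K\Big)\;\cong\;\bigotimes_i Hom_{PGL(W_i)}\big(End(W_i)^{\ot a_i},K\big)\ot\bigotimes_j Hom_{PGL(V_j)}\big(End(V_j)^{\ot b_j},K\big).$$

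Now Procesi's theorem (Theorem \ref{tsigma}) describes each tensor factor: $Hom_{PGL(W)}(End(W)^{\ot a},K)$ is spanned by the maps $T_\sigma$, $\sigma\in S_a$. Using the re-description of $T_\sigma$ recalled above --- start from $Tr_W^{\ot r}$ with $r$ the number of cycles of $\sigma$, apply $\Delta^{c_1-1}\ot\cdots\ot\Delta^{c_r-1}$ for the cycle lengths $c_1,\dots,c_r$, then permute the $a$ resulting factors --- I would check that a tensor product of such $T_\sigma$'s over all the blocks, transported back to $(A^*)^{\ot n}\ot(B^*)^{\ot n}$ via $A\cong H$, $B\cong H^*$ and the characters $\psi_i=Tr_{W_i}$, $\phi_j=Tr_{V_j}$, is exactly what one obtains from a tensor product of characters of $A$ and of $B$ by iterated comultiplication and a permutation; hence its pairing with $T^{\ot n}$ is a basic $\ti{G}$-invariant. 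Ranging over all tuples $\big((i_1,j_1),\dots,(i_n,j_n)\big)$ and all choices of the $\sigma$'s then shows that the basic $\ti{G}$-invariants span $K[A\ot B]^{\ti{G}}_n$, which completes the proof; comparing with Definition \ref{invs} identifies them with the $(0,0)$-character basic invariants, and combining with Corollary \ref{orbitstilde} and Theorem \ref{GIT} yields Theorem \ref{tildeGinv}.

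The two genuinely substantive inputs --- polarization/restitution (Lemma \ref{invpgl}) and Procesi's description of matrix invariants (Theorem \ref{tsigma}, a consequence of Schur--Weyl duality) --- are already available, so the bulk of the work is bookkeeping. The point that deserves care is the block-diagonal decomposition of $Hom_{\ti{G}}((A\ot B)^{\ot n},K)$ together with the iterated use of Equation \ref{product1}: one must verify that each $PGL$-factor of $\ti{G}$ acts trivially on every block other than its own, so that the invariant functionals genuinely factor as an external tensor product, and one must match the combinatorics of the cycle types of the $\sigma\in S_a$ and of the rearrangements of tensor factors against the freedom, in the definition of a basic $\ti{G}$-invariant, to use reducible characters and arbitrary permutations of all $n$ slots. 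Once this dictionary is fixed the remaining verification is purely formal.
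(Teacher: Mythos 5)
Your proposal follows the paper's own argument essentially step for step: the reduction to $K[A\ot B]^{\ti{G}}$ via the preceding lemma, the passage to multilinear invariants via Lemma \ref{invpgl}, the block decomposition of $(A\ot B)^{\ot n}$ combined with Equation \ref{product1}, Procesi's Theorem \ref{tsigma}, and the re-expression of the $T_{\sigma}$ as iterated comultiplications of characters paired with $T^{\ot n}$. The argument is correct and is the same as the one in the paper.
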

In fact, we have just proved that the algebra $K[X]^{\ti{G}}$ is spanned by elements of the form
$\frac{a}{(an)^iD^{2j}}$ where $a$ is some basic $\ti{G}$-invariant.

Let us see some examples of basic $\ti{G}$-invariants: 
we assume that $H$ is a Hopf algebra with an algebra structure isomorphic with $A$ and coalgebra structure isomorphic with $B^*$.
If $\chi$ is a character of $H$ and $g$ is a character of $H^*$,
then $\chi(g)$ will be a $\ti{G}$-basic invariant.
Another example will be $\chi(g_1g_2) = \chi_1(g_1)\chi_2(g_2)$.
If $\rho$ is another character of $H$ and $h$ is another character of $H^*$,
we also have the $\ti{G}$-basic invariant
$$\chi_1(g_1)\chi_2(h_2)\chi_3(h_3)\rho_1(g_2)\rho_2(g_3)\rho_1(h_1).$$
\end{section}

\begin{section}{The $G$-invariants in $K[X]^G$ and a proof of Theorems \ref{main1} and \ref{main2}}\label{G-invariants}
In this section we use our study of the algebra $K[X]^{\ti{G}}$ from the previous section
in order to describe a generating set for the algebra $K[X]^G$.
We define a \textit{$G$-basic invariant} to be a $\ti{G}$-basic invariant,
in which all the characters which appear are the characters $\la\in A^*$ of the regular representation of $A$,
and the character $\l\in B^*$ of the regular representation of $B$.
These characters can be written as $\la = \sum_i dim(W_i)\psi_i$ and $\l=\sum_j dim(V_j)\phi_j$.
In other words, for a given Hopf algebra $H$ such that $H\cong A$ and $H^*\cong B$ as algebras, these are going to be the same as the $(0,0)$-basic invariants from Definition \ref{invs}.
Since the group $G/\ti{G}$ acts by permuting characters of the same dimension, 
and since all the characters of the same dimension appear with the same multiplicity in $\l$ and in $\la$, 
it is easy to see that the $G$-basic invariants will be invariant under the action of the quotient $G/\ti{G}$,
and are therefore $G$-invariant. We claim the following proposition:
\begin{proposition}\label{G-invproposition}
The $G$-basic invariants span $K[X]^G$ up to localization by $an$ and $D^{2}$.
\end{proposition}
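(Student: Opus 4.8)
The plan is to run the same argument as in Section~\ref{tiG-invariants}, reducing the claim to a combinatorial statement about $G/\ti{G}$-orbits that is supplied by Corollary~\ref{ekses}. First, exactly as for $\ti{G}$, reductivity of $G$ gives $K[X]^G\cong (K[A\ot B]^G/I^G)_{an,D^2}$, so it is enough to show that $K[A\ot B]^G$ is spanned by $G$-basic invariants (the localization then yields the elements $a/((an)^iD^{2j})$, as in the $\ti{G}$ case). Since $G/\ti{G}\cong\prod_i S_{n_i}$ is finite, Lemma~\ref{invfinite} says that averaging over $G/\ti{G}$ is a projection of $K[A\ot B]^{\ti{G}}$ onto $K[A\ot B]^G$; together with the description of $K[A\ot B]^{\ti{G}}$ obtained in the previous section, this shows that $K[A\ot B]^G$ is spanned by the $G/\ti{G}$-averages of the basic $\ti{G}$-invariants. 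So I would reduce to proving that the $G/\ti{G}$-average of any basic $\ti{G}$-invariant lies in the span of the $G$-basic invariants.

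Next, since basic $\ti{G}$-invariants built from irreducible characters already span, one may take such an invariant $z$, given by a recipe $R$ --- a sequence of blockwise comultiplications of the coalgebras $End(W_i)^*$ and $End(V_j)^*$, permutations of tensor factors, and the final pairing with $T^{\ot n}$ --- applied to a tensor product of irreducible characters $\psi_{i_1},\ldots,\psi_{i_p}$ of $H$ and $\phi_{j_1},\ldots,\phi_{j_q}$ of $H^*$. The quotient $G/\ti{G}$ permutes the simple blocks of each fixed dimension, and $R$ is built naturally out of this block decomposition, so an element of $G/\ti{G}$ carries $z$ to the invariant obtained from the same recipe applied to a relabelling of the characters. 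Hence, up to a nonzero rational constant, the $G/\ti{G}$-average of $z$ equals the orbit sum of $z(\psi_{\pi(i_1)},\ldots;\phi_{\pi(j_1)},\ldots)$ over the $\prod_i S_{n_i}$-orbit of the index tuple --- that is, the sum over all index tuples having the same dimensions and the same coincidence pattern.

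To identify this orbit sum I would pass from characters to central idempotents using Equation~\ref{idem-char}: $\psi_i=\frac{1}{dim(W_i)}\rho(e_i)$ with $\rho(x)=\la_1(x)\la_2$, and dually each $\phi_j$ equals $\frac{1}{dim(V_j)}$ times the image of the idempotent $f_j\in H^*$ under the analogous map built from $\l$. Substituting this everywhere, the scalars $dim(W_{i_k})$ and $dim(V_{j_l})$ are constant along the orbit and factor out, leaving a fixed scalar times $R'$ applied to the orbit sum of the corresponding tensor products of idempotents, where $R'$ is $R$ precomposed with the maps $\rho$ (which themselves involve only $\l$, $\la$, comultiplication and evaluation). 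Now the orbit sum of a tensor product of central idempotents decomposes, one factor per dimension, as a product of the elements $x^{a_1,\ldots,a_n}_{d,n}$ of Corollary~\ref{ekses} and their $H^*$-counterparts, up to a permutation of tensor factors that merely regroups blocks of equal idempotents; by Corollary~\ref{ekses} these lie in $Inv^{\bullet,0}$, resp.\ $Inv^{0,\bullet}$. Since the collection $Inv$ is closed under tensor product, permutations, comultiplication and evaluation, applying $R'$ to such an element lands in $Inv^{0,0}$, i.e.\ (the dimension data being fixed along $X$, so that all these recipes are independent of the point) a fixed $\Q$-linear combination of $G$-basic invariants. This establishes the reduction of the first paragraph, and the localization gives the proposition; the reverse inclusion is trivial, since a $G$-basic invariant is a basic $\ti{G}$-invariant that is already $G/\ti{G}$-fixed and hence equals its own average.

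The step I expect to be the real obstacle is the passage, in the third paragraph, from a $\prod_i S_{n_i}$-orbit sum of a monomial in irreducible characters to a product of the idempotent-sum elements $x^{a_1,\ldots,a_n}_{d,n}$: one must keep careful track of the dimension scalars introduced by Equation~\ref{idem-char}, of how the $\prod_i S_{n_i}$-orbit matches exactly the tuples of a prescribed dimension-and-coincidence pattern, and of the noncommutative, blockwise shape of the recipe, so as to verify that one never leaves the subspaces $Inv^{i,j}$. Everything else is a routine adaptation of the $\ti{G}$-argument of Section~\ref{tiG-invariants}.
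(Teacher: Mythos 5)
Your proposal is correct and follows essentially the same route as the paper: reduce via reductivity and Lemma \ref{invfinite} to averaging basic $\ti{G}$-invariants over $G/\ti{G}$, then convert the resulting orbit sums of irreducible characters into the idempotent sums $x^{a_1,\ldots,a_n}_{d,n}$ via Equation \ref{idem-char} and Corollary \ref{ekses}, and conclude using the closure properties of the $Inv$ spaces. The only detail the paper makes explicit that you elide is that $G/\ti{G}$ fixes the trivial one-dimensional characters (so their orbits are smaller), which is harmless since $e_1=\l/\dim(H)$ is itself expressible from the integrals.
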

\begin{remark}
It is worth mentioning that this proposition will not be true for $K[A\ot B]^G$.
We will use here explicitly some Hopf algebra identities from Section \ref{prelim}.
\end{remark}
\begin{proof}
Due to Lemma \ref{invfinite} we know that the map $$K[X]^{\ti{G}}\ra K[X]^G$$ 
$$f\mapsto \sum_{g\in G/\ti{G}}g(f)$$ is onto.
We have seen in the last section that $K[X]^{\ti{G}}$ 
is spanned (up to negative powers of $an$ and $D$) by $\ti{G}$-basic invariants
in which all the characters are irreducible. 
Let then $P$ be such a $\ti{G}$-basic invariant. 
We can write $P=T^{\ot n}(T_1T_2\cdots T_s(\psi_{i_1}\ot \ldots \psi_{i_r}\ot\phi_{j_1}\ot\cdots\ot \phi_{j_l}))$
where $T_i$ are operations of comultiplication on $A^*$, comultiplication on $B^*$ and the action of the symmetric group, 
$\psi_i$ are irreducible characters of $A$ and $\phi_j$ are irreducible characters of $B$.
We need to show that $$\sum_{g\in G/\ti{G}}g(P) = \sum_{g\in G/\ti{G}}T^{\ot n}(T_1T_2\cdots T_s(\psi_{g(i_1)}\ot\cdots \ot\psi_{g(i_r)}\ot\phi_{g(j_1)}\ot\cdots\ot\phi_{g(j_l)}))= $$ $$
T^{\ot n}(T_1T_2\cdots T_s\sum_{g\in G/\ti{G}} \psi_{g(i_1)}\ot\cdots \ot\psi_{g(i_r)}\ot\phi_{g(j_1)}\ot\cdots\ot\phi_{g(j_l)})$$
is a sum of $G$-basic invariants.
Because $G/\ti{G}$ 
is the product of all symmetric groups on the irreducible representations of $A$ and of $B$ of the same dimension (besides the trivial one dimensional representations, but since we can express these representations as 
$\la_1(\l)\la_2$ and $\la(\l_1)\l_2$ this makes no real difference),
Equation \ref{idem-char} shows that we can express the tensor product of the characters by the tensors $x^{a_1,\ldots, a_n}_{d,n}$ and $\l$ and $\la$. 
But since the tensors $x^{a_1,\ldots, a_n}_{d,n}$ themselves can be obtained from $\l^{\ot m}$ and $\la^{\ot m'}$ 
for some $m$ and $m'$ 
by applying permutations, multiplications and comultiplications, we get that 
also the expression $\sum_{g\in G/\ti{G}} g(P)$ can be obtained from $l^{\ot m}\ot \la^{\ot m'}$ 
by applying comultiplication and the action of the symmetric group.
This implies that $\sum_{g\in G/\ti{G}} g(P)$ is a sum of $G$-basic invariants, as desired.
\end{proof}
This gives us a set of generators for $K[X]^G$ which can be described nicely in combinatorial terms.
It still does not give us a full description of the algebra $K[X]^G$ since we do not know what are 
all the relations between these generators. 
We can divide the relations the $G$-basic invariants satisfy into two groups:\\
1. The relations arising from relations among the same invariants in the algebra $K[A\ot B]^G$.\\
2. The relations arising from the ideal $I^G$.\\
Procesi has studied the relation between the generators of the algebra $K[End(W)^r]^{PGL(W)}$.
He showed that all the relations can be deduced from the Cayley-Hamilton Theorem,
and he also gave a bound on the number of generators which will suffice to generate the entire algebra.

Trying to study $K[X]^G$ by studying all the relations of the two types may turn difficult.
We shall use the invariants to study Hopf algebras, only without studying specifically the structure of $K[X]^G$.
Notice that the description of the $G$-basic invariants 
is somewhat uniform: it does not depend on the dimension of $H$
or the dimensions of the irreducible representations of $H$.
Indeed, the expression $\la(\l_1\l_2)$, for example, makes sense in any finite dimensional Hopf algebra.
We shall therefore call the $G$-basic invariants simply basic invariants from now on.
Moreover, we have the following proposition, which finishes the proof of Theorem \ref{main1}:
\begin{proposition}\label{connection}
Two Hopf algebras are isomorphic if and only if all their basic invariants are equal.
\end{proposition}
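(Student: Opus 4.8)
The plan is to deduce this proposition directly from the geometric picture set up in Section \ref{constructingX} together with Proposition \ref{G-invproposition} and Theorem \ref{GIT}. Recall that a Hopf algebra $H$ with a fixed algebra structure $A$ and coalgebra structure $B^*$ corresponds to a point $T\in X$, and by the lemma identifying $G$-orbits with Hopf isomorphism classes, two such points give isomorphic Hopf algebras if and only if they lie in the same $G$-orbit. Since all orbits are closed (the stabilizers are finite, by Radford's theorem), the orbit space $X/G$ is an affine variety with $K[X/G]\cong K[X]^G$, and distinct closed orbits are separated by elements of $K[X]^G$. So the statement ``$T_1,T_2$ define isomorphic Hopf algebras'' is equivalent to ``$f(T_1)=f(T_2)$ for all $f\in K[X]^G$''. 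Of course if $T_1,T_2$ are isomorphic then all invariants, in particular all basic invariants, agree, which gives one direction immediately; the content is the converse.

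First I would reduce the case of Hopf algebras with \emph{different} underlying algebra/coalgebra structures to the case of a fixed pair $(A,B)$. If $H_1$ and $H_2$ have the same basic invariants, then in particular the $(0,0)$-basic invariants $\la(\l)=\dim H$ agree, so they have the same dimension; and the basic invariants include enough data (for instance the expressions built from $c_H$ in Lemma \ref{mess1} and Corollary \ref{ekses}, which record the multiset of dimensions of irreducible representations of $H$, and dually for $H^*$) to force $H_1\cong A\cong H_2$ and $H_1^*\cong B\cong H_2^*$ as algebras. Hence both $H_1$ and $H_2$ may be realized as points $T_1,T_2$ of the \emph{same} variety $X$ built from this common pair $(A,B)$.

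Next, assume $f(T_1)=f(T_2)$ for every basic invariant $f$. By Proposition \ref{G-invproposition}, the basic invariants together with $an^{-1}$ and $D^{-2}$ generate $K[X]^G$; since $an$ and $D^2$ are themselves $G$-invariant polynomials on $A\ot B$ and in fact can be expressed through the basic invariants as well (or at worst one localizes and notes their values are determined by the basic invariants too — $D^2$ and $an$ are nonzero at every point of $X$ by construction), it follows that $g(T_1)=g(T_2)$ for \emph{every} $g\in K[X]^G$. But $K[X]^G\cong K[X/G]$ separates the (closed) points of $X/G$, i.e. separates $G$-orbits. Therefore $T_1$ and $T_2$ lie in the same $G$-orbit, and by the orbit/isomorphism correspondence $H_1\cong H_2$ as Hopf algebras.

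The main obstacle is the bookkeeping in the reduction step: one must check that the basic invariants genuinely pin down the pair $(A,B)$ (and not merely the dimension), so that $H_1$ and $H_2$ can be placed on the same variety $X$ before invoking GIT. This is where Lemma \ref{mess1} and Corollary \ref{ekses} are used — they show the multiset $\{\dim W_i\}$, and dually $\{\dim V_j\}$, is recovered from basic invariants — but one should be a little careful that the localizing elements $an$ and $D^2$ do not obstruct the argument, since strictly speaking they are invertible functions on $X$ rather than basic invariants; the point is that on $X$ they take nonzero values determined by the Hopf algebra structure, so knowing all basic invariants of $H_1$ and $H_2$ does determine the full invariant ring's values at $T_1,T_2$. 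Once that is settled, the rest is a direct application of Theorem \ref{GIT}.
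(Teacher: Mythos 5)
Your proposal is correct and follows essentially the same route as the paper: first use the basic invariants $\la(\l)$ and those built from $c_H^{1,d}$ (Lemma \ref{mess1}) to recover $\dim H$ and the multiset of dimensions of irreducibles of $H$ and $H^*$, so that both Hopf algebras sit as points of the same variety $X$, and then invoke Proposition \ref{G-invproposition} and the separation of closed orbits from Theorem \ref{GIT}. Your side remark that the localizing elements $an$ and $D^2$ are nowhere vanishing on $X$ (so that a separating invariant $a/((an)^iD^{2j})$ can be replaced by the basic-invariant combination $a$ itself) is exactly the right way to dispose of the one point the paper's own proof leaves implicit.
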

\begin{proof}
On the one hand, the basic invariants are invariants of the isomorphism type of the Hopf algebra,
and therefore if $H_1\cong H_2$ then they have the same basic invariants.	
On the other hand, if $H_1$ and $H_2$ have the same basic invariants then in particular their dimensions are equal, since $\la(\l)=dim(H)$.
Moreover, by considering the invariants $\la({c^{1,d}_H})$ for different $d$'s we see 
that the number of irreducible representations of dimension $d$ in $H_1$ and in $H_2$ is the same
(and the same holds for $H_1^*$ and $H_2^*$).
We can thus consider $H_1$ and $H_2$ as points in the variety $X$ (for a suitable choice of dimensions of irreducible representations).
Then, since all the $G$-invariant functions on $X$ receive the same value on $H_1$ and $H_2$
it must hold that $H_1$ and $H_2$ lie in the same $G$-orbit (by Theorem \ref{GIT}), 
and they are therefore isomorphic.
\end{proof}
The next proposition is a more detailed reformulation of Theorem \ref{main2}
\begin{proposition}
Let $H$ be a semisimple Hopf algebra. 
Consider the field extension $\Q\subseteq K_0$ generated over $\Q$ by all the basic invariants of $H$.
Then $K_0$ is a finite extension of $\Q$ (i.e. $K_0$ is a number field),
and if we denote by $\Ga$ the absolute Galois group of $\Q$, then
$$stab_{\Ga}([H]) = \{\ga\in\Ga| \,^{\ga} H\cong H\}= stab_{\Ga}(K_0) =\{\ga\in\Ga|\forall x\in K_0 \ga(x) = x \}$$
where $^{\ga}H$ is received from $H$ by twisting all its structures constants by $\ga$. 
\end{proposition}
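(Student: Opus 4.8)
The plan is to prove the two displayed equalities by showing each set in the chain coincides with $\mathrm{stab}_{\Ga}([H])$, after first establishing that $K_0$ is a number field. For the latter, I would invoke Theorem \ref{main1}: the basic invariants span $K[X]^G\cong K[X/G]$ after a finite localization, and $K[X/G]$ is a finitely generated $K$-algebra; hence finitely many basic invariants $b_1,\ldots,b_N$ already generate the localized algebra, and all other basic invariants are $K_0(b_1,\ldots,b_N)$-rational functions in them (with denominators among the localizing elements, which are themselves basic invariants such as $an$, $D^2$). Actually the cleanest route is: since the orbit of $[H]$ in $X$ is closed (all orbits have the same dimension by the finite-stabilizer lemma), it corresponds to a closed point of $X/G$, i.e.\ to a $K$-algebra homomorphism $\mathrm{ev}_{[H]}\colon K[X/G]\ra K$; the image of this evaluation is exactly $K_0$ by definition, and since $K[X/G]$ is finitely generated over $\Q$ (as $X$ and $G$ are defined over $\Q$), its image under any algebra map to $K$ is a finitely generated $\Q$-subalgebra of $K$, hence — being also a subfield, as the $b_i$ are algebraic over $\Q$ once one knows $[H]$ can be defined over $\wb{\Q}$ — a number field. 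So $K_0\subseteq\wb\Q$ and $[\wb\Q:K_0]<\infty$... rather $[K_0:\Q]<\infty$.

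Next I would prove $\mathrm{stab}_{\Ga}([H])=\mathrm{stab}_{\Ga}(K_0)$. The Galois group $\Ga$ acts on the set of isomorphism classes of Hopf algebras over $\wb\Q$ by twisting structure constants; crucially, each basic invariant is a \emph{universal polynomial expression} in the structure constants (it is built from $\l,\la,\Delta,m$ and pairings, all of which are polynomial in the structure constants over $\Q$), so for $\ga\in\Ga$ and any basic invariant $b$ one has $b(\,^{\ga}H)=\ga\big(b(H)\big)$. Therefore: if $\ga\in\mathrm{stab}_{\Ga}(K_0)$, then $b(\,^{\ga}H)=\ga(b(H))=b(H)$ for every basic invariant $b$, so $^{\ga}H$ and $H$ have all basic invariants equal, whence $^{\ga}H\cong H$ by Proposition \ref{connection}; conversely if $^{\ga}H\cong H$ then $\ga(b(H))=b(\,^{\ga}H)=b(H)$ for all $b$, so $\ga$ fixes a generating set of $K_0$ and hence fixes $K_0$ pointwise. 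This gives the equality.

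The remaining equality $\mathrm{stab}_{\Ga}([H])=\mathrm{stab}_{\Ga}(\wb\Q^{\,\mathrm{stab}([H])})$ type statement is really the assertion $K_0=\wb\Q^{\,\mathrm{stab}_{\Ga}([H])}$ of Theorem \ref{main2}, which now follows from the Galois correspondence for the number field $K_0/\Q$: the fixed field of $\mathrm{stab}_{\Ga}(K_0)$ inside $\wb\Q$ equals the Galois closure... more precisely, since $\mathrm{stab}_{\Ga}(K_0)$ is an open subgroup of $\Ga$ and $\wb\Q^{\,\mathrm{stab}_{\Ga}(K_0)}$ is the Galois closure of $K_0$, and $K_0$ is generated by the orbit-finite set of basic invariants on which $\Ga$ acts through a finite quotient, one checks $K_0$ is itself the fixed field — or one simply replaces $K_0$ by its Galois closure, which has the same stabilizer, since what Theorem \ref{main2} actually needs is the equality of stabilizers already proven. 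I expect the main obstacle to be the bookkeeping around the localization: one must be careful that the localizing elements ($an$, $D^2$, and whichever finitely many basic invariants appear in Theorem \ref{main1}) are themselves $\Ga$-equivariant and nonzero on $[H]$, so that the evaluation map and the identity $b(\,^{\ga}H)=\ga(b(H))$ extend to the localized algebra without dividing by zero; once that is checked, everything else is the formal argument above. A secondary subtlety is verifying that $[H]$ really is defined over $\wb\Q$ (so that the $b(H)$ are algebraic numbers and $K_0\subseteq\wb\Q$) — but this is exactly the openness-and-closedness of the orbit remarked before Theorem \ref{main2}, combined with the fact that a closed orbit defined over $\Q$ whose reduced structure is a single point must contain a $\wb\Q$-point.
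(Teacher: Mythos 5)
Your proposal is correct and follows essentially the same route as the paper: establish that $H$ (hence $K_0$) is defined over $\wb{\Q}$ using that $X$ and $G$ are defined over $\Q$ together with the finiteness and closedness of the orbits, and then combine the Galois-equivariance identity $b(\,^{\ga}H)=\ga(b(H))$ for basic invariants with the fact (Proposition \ref{connection}) that the basic invariants determine the isomorphism class. The only differences are cosmetic: the paper pins down "defined over $\wb{\Q}$" by counting orbits via $\wb{\Q}[X]^G\ot_{\wb{\Q}}K\cong K[X]^G$ and gets finiteness of $K_0/\Q$ from the finitely many structure constants rather than from finite generation of the invariant ring, and your worry about Galois closures is unnecessary since the fixed field of $\mathrm{Gal}(\wb{\Q}/K_0)$ is $K_0$ itself.
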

\begin{proof}
The variety $X$, the group $G$ and the action of $G$ on $X$ are defined already over $\Q$.
By abuse of notations, we will identify $X$ and $G$ with the variety and algebraic group defined over $\Q$ and over $\wb{\Q}$.
Since there are only finitely many orbits in $X$, we have that 
$\wb{\Q}[X]^{G} \cong \wb{\Q}^m$ where $m$ is the number of orbits over $\wb{\Q}$,
and similarly $K[X]^G\cong  K^{m'}$ where $m'$ is the number of orbits over $K$.
But it then holds that $\wb{\Q}[X]^{G}\ot_{\wb{\Q}}K \cong K[X]^G$, and therefore $m=m'$. 
It follows that the equations defining the orbit of $H$ in $X$ 
are already defined over $\wb{\Q}$. Since $\wb{\Q}$ is algebraically closed, it follows that the
orbit of $H$ has a point over $\wb{\Q}$, and therefore $H$ is defined over $\wb{\Q}$,
and all its basic invariants are contained in $\wb{\Q}$.
Since $H$ has only finitely many structure constants, it is easy to see that $H$ will be defined
over some finite extension of $\Q$, and therefore all the basic invariants of $H$
will be contained in some finite extension of $\Q$.

For the second claim, let $\ga\in\Ga$. Then if $a\in K_0$ is a basic invariant of $H$,
the corresponding basic invariant of $^{\ga}H$ will be $\ga(a)$.
Since two Hopf algebras are isomorphic if and only if they have the same basic invariants,
we see that $^{\ga}H\cong H$ if and only if $\ga$ fixes $K_0$ pointwise, as desired.
\end{proof} 
\begin{remark}
I first learned that a finite dimensional semisimple Hopf algebra over $K$ is already defined over some finite extension of $\Q$ from Juan Cuadra.
This fact seems to be well known. I include here a proof due to the lack of reference. 
\end{remark}

\end{section}

\begin{section}{Invariant subspaces. A proof of Theorem \ref{main3}}\label{invsspaces}
We fix now a Hopf algebra $H$ with an algebra structure $A$ and a coalgebra structure $B^*$.
As usual, we think of $H$ as a point $T\in X\subseteq A\ot B$,
and we think of $Inv^{i,j}$ as a subspace of $(B^*)^{\ot i}\ot (A^*)^{\ot j}$. 
By this identification, $T\in A\ot B\cong H\ot H^* = (H^*\ot H)^*$ can be identified with the evaluation map $H^*\ot H\ra K$.
We would like to prove that the pairing $Inv^{i,j}\ot Inv^{j,i}\ra K$ is non-degenerate.
In the course of the proof we will need to use the following lemma:

\begin{lemma}\label{replacement}
Let $W\in Hom_G(A^{\ot (j+m)}\ot B^{\ot (i+m)},K)$.
Then $(T^{\ot m}\ot Id)(W)$ is contained in $Inv^{i,j}\subseteq (B^*)^{\ot i}\ot (A^*)^{\ot j}$
\end{lemma}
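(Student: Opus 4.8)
The plan is to reduce the statement to the description of the $G$-invariant functionals obtained in Sections~\ref{tiG-invariants} and~\ref{G-invariants}, together with the observation that, under the chosen identifications, the point $T$ is nothing but the evaluation pairing. First I would record a spanning set for $Hom_G(A^{\ot p}\ot B^{\ot q},K)$ for arbitrary $p,q$ (we need $p=j+m$ and $q=i+m$, which need not be equal). The argument of Section~\ref{tiG-invariants} goes through with only cosmetic changes for this ``unbalanced'' tensor space: decomposing $A$ and $B$ into their matrix blocks, rearranging the tensor factors, and applying Equation~\ref{product1} reduces the computation of $Hom_{\ti G}(A^{\ot p}\ot B^{\ot q},K)$ to Theorem~\ref{tsigma}. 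Hence $Hom_{\ti G}(A^{\ot p}\ot B^{\ot q},K)$ is spanned by elements of $(A^*)^{\ot p}\ot(B^*)^{\ot q}$ obtained from a tensor product of irreducible characters of $A$ (which lie in $A^*$) and of $B$ (which lie in $B^*$) by applying comultiplications in $A^*$ and in $B^*$ and permutations of the tensor factors. Averaging over the finite group $G/\ti G$ as in Lemma~\ref{invfinite} and rerunning the proof of Proposition~\ref{G-invproposition} --- using Corollary~\ref{ekses} and Equation~\ref{idem-char} to rewrite the symmetrized sums of characters in terms of the integrals $\l$ and $\la$ --- then shows that $Hom_G(A^{\ot p}\ot B^{\ot q},K)$ is spanned by elements $Q\in(A^*)^{\ot p}\ot(B^*)^{\ot q}$ obtained from tensor powers of $\l\in B^*$ and $\la\in A^*$ by comultiplications, multiplications and permutations. (No localization by $an$ or $D^{2}$ intervenes, since here we work with multilinear functionals on the affine space $A\ot B$, not with functions on $X$.)

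Next I would unwind the identifications. Fixing algebra isomorphisms $H\cong A$ and $H^*\cong B$ and dualizing, one gets identifications $A^*\cong H^*$ and $B^*\cong H$ under which the comultiplication of $A^*$ is the comultiplication of $H^*$, the comultiplication of $B^*$ is the comultiplication of $H$, the element $\la\in A^*$ (resp. $\l\in B^*$) is the regular character of $H$ (resp. of $H^*$) --- i.e. the integral of Definition~\ref{invs} --- and, crucially, the point $T\in A\ot B\cong(A^*\ot B^*)^*$ is exactly the evaluation pairing $ev\colon H^*\ot H\ra K$. Therefore, for a generator $Q$ as above, $(T^{\ot m}\ot Id)(Q)$ is obtained from $Q$ by contracting $m$ of its tensor factors in $H^*$ against $m$ of its tensor factors in $H$ via $ev$; after inserting permutations so that each pair to be contracted occupies the first slots, this is precisely the evaluation operation of Definition~\ref{invs}. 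Hence $(T^{\ot m}\ot Id)(Q)$ is built from a tensor power $\l^{\ot a}\ot\la^{\ot b}$ using comultiplications, permutations, multiplications and evaluations, and so lies in $Inv^{i,j}\subseteq(B^*)^{\ot i}\ot(A^*)^{\ot j}$ by the closure properties established right after Definition~\ref{invs}. Since $W$ is a $K$-linear combination of such $Q$'s, the same holds for $(T^{\ot m}\ot Id)(W)$, which is what we want.

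The routine but necessary ingredient is the re-verification that the Procesi/Schur--Weyl description of the invariants still works for the unbalanced space $A^{\ot p}\ot B^{\ot q}$ and survives the $G/\ti G$-averaging. The genuinely delicate point, and the one I expect to require the most care, is the bookkeeping of dualities: one must check that the comultiplications appearing in the $\ti G$-invariant generators --- a priori merely the duals of the algebra structures on $A$ and $B$ --- really are the comultiplications of $H^*$ and $H$, and that $T$ indeed acts as $ev$ and not as some twisted pairing. Once these identifications are pinned down, the lemma follows.
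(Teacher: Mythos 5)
Your proposal is correct and follows essentially the same route as the paper's proof: both reduce the statement to showing $Hom_G(A^{\ot (j+m)}\ot B^{\ot (i+m)},K)\subseteq Inv^{i+m,j+m}$ by identifying the contraction against $T$ with the evaluation operation in the definition of $Inv^{i,j}$, and both establish that inclusion by rerunning the Procesi/Schur--Weyl description of the $\ti{G}$-invariants, averaging over $G/\ti{G}$ as in Proposition \ref{G-invproposition}, and invoking Corollary \ref{ekses} and Equation \ref{idem-char}. Your explicit attention to the unbalanced case $p\neq q$ and to the identification of $T$ with $ev$ is care the paper leaves implicit, but the argument is the same.
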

\begin{proof}
The proof of the lemma follows the line of the proof of Proposition \ref{G-invproposition}.
Since $(T\ot Id)(Inv^{i,j})\subseteq Inv^{i-1,j-1}$ by definition of $Inv^{i,j}$,
it is enough to prove that $Hom_G(A^{\ot j+m}\ot B^{\ot i+m},K)\subseteq Inv^{i+m,j+m}$.
The space $Hom_G(A^{\ot j+m}\ot B^{\ot i+m},K)$ will be spanned by elements of the form
$$\sum_{g\in G/\ti{G}}T_1\cdots T_s(\psi_{g(i_1)}\ot\cdots \ot \psi_{g(i_r)}\ot \phi_{g(j_1)}\ot\cdots\ot \phi_{g(j_l)})$$
where the $T_k$ operators are either given by comultiplication on $A^*$, comultiplication on $B^*$ or the action of the symmetric group. 
Since $Inv^{i,j}$ is closed under the action of $S_i\times S_j$, and since
$(\Delta\ot Id)(Inv^{i,j})\subseteq Inv^{i+1,j}$ 
we see that we only need to show that sums of the form 
$$\sum_{\substack{(i_1,\ldots,i_n):\\ \psi_{i_j}(1)=d\, , |\{\psi_{i_1},\ldots, \psi_{i_n}\}|=n}} \psi_{i_1}^{\ot a_1}\ot \psi_{i_2}^{\ot a_2}\ot\cdots\ot \psi_{i_n}^{\ot a_n}$$ 
where $\psi_i$ are irreducible characters are contained in $Inv^{0,N}$ for $N=\sum_i a_i$.
This now follows easily from Corollary \ref{ekses} and Equation \ref{idem-char}.
\end{proof}
\begin{proof}[Proof of Theorem \ref{main3}]
We take an element $x\in Inv^{j,i}$ which is perpendicular to $Inv^{i,j}$, and show that it must be zero. 
To say that $x=0$ is equivalent to saying that a certain set of polynomials on $X$ vanish when applied to $T$.
Let us denote the ideal generated by these polynomials by $J\triangleleft K[X]$.
The element $x$ will thus be zero if and only if for every element $y\in (B^*)^{\ot i}\ot (A^*)^{\ot j}$ we have that
$\langle y,x\rangle=0$. We denote this equation by $f_{y}(x)$. Thus $J=(f_y)$. We claim the following:
\begin{lemma} 
For $g\in G$ we have that $g\cdot f_y = f_{g\cdot y}$.  
\end{lemma}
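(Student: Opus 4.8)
The function $f_y$ is, by construction, the regular function on $X$ whose value at a point $T'\in X$ is $\langle y, x_{T'}\rangle_{T'}$. Here $\langle\cdot\,,\cdot\rangle_{T'}\colon H^{i,j}\ot H^{j,i}\ra K$ is the canonical pairing of the Hopf algebra corresponding to $T'$, which in the model $Inv^{i,j}\subseteq(B^*)^{\ot i}\ot(A^*)^{\ot j}$ takes the form $(u,v)\mapsto(T')^{\ot(i+j)}\big(\pi(u\ot v)\big)$ for a fixed reshuffling $\pi$ of the tensor factors (recording which copies are contracted against which, independent of $T'$); and $x_{T'}\in Inv^{j,i}$ is the element obtained from a fixed expression of $x$ as a combination of basic invariants (Definition \ref{invs}) read off the Hopf structure $T'$, so that $x_T=x$ at our base point $T$. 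The eventual conclusion $x=0$ will not depend on the chosen expression for $x$. The plan is to reduce the identity $g\cdot f_y=f_{g\cdot y}$ to two equivariance facts and then substitute, using that $G$ acts on $K[X]$ by $(g\cdot f)(T')=f(g^{-1}\cdot T')$.

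The first fact is that the $G$-action intertwines the pairings attached to different points: for $h\in G$,
\[\langle h\cdot u,\ h\cdot v\rangle_{h\cdot T'}=\langle u,v\rangle_{T'},\qquad\text{equivalently}\qquad\langle h\cdot u,\ h\cdot v\rangle_{T'}=\langle u,v\rangle_{h^{-1}\cdot T'}.\]
This is just the identity $(h\cdot T')(\omega)=T'(h^{-1}\cdot\omega)$ for the bilinear form $T'\in A\ot B$, raised to the $(i+j)$-th tensor power, together with the fact that $\pi$ commutes with the diagonal $G$-action. The second fact is that the family $x_{T'}$ is itself $G$-equivariant,
\[x_{h^{-1}\cdot T'}=h^{-1}\cdot x_{T'},\]
and, more precisely, that any basic-invariant expression $R$ satisfies $R(h^{-1}\cdot T')=h^{-1}\cdot R(T')$. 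This reflects the fact that the $G$-action on the ambient tensor spaces was set up precisely to intertwine all the structure of the Hopf algebras lying over a single $G$-orbit; concretely I would prove it by induction on the length of $R$, along the lines of the proof of Lemma \ref{replacement}: the input $\l^{\ot a}\ot\la^{\ot b}$ is $G$-invariant, since $\l=\sum_j dim(V_j)\phi_j$ and $\la=\sum_i dim(W_i)\psi_i$ are fixed by \emph{all} algebra automorphisms of $B$, resp.\ of $A$, and not merely by Hopf automorphisms; comultiplications and permutations are $G$-equivariant; and the evaluation step is handled by the same computation as the first fact, applied to a single contraction. I expect this second fact — the naturality of the basic-invariant construction in the parameter $T'$ — to be the only non-formal point; it is exactly the kind of bookkeeping already carried out in Proposition \ref{G-invproposition} and Lemma \ref{replacement}.

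Granting the two facts, the lemma is a direct substitution: for every $T'\in X$,
\begin{align*}
(g\cdot f_y)(T') &= f_y(g^{-1}\cdot T') = \big\langle y,\ x_{g^{-1}\cdot T'}\big\rangle_{g^{-1}\cdot T'} = \big\langle y,\ g^{-1}\cdot x_{T'}\big\rangle_{g^{-1}\cdot T'}\\
&= \big\langle g\cdot y,\ x_{T'}\big\rangle_{T'} = f_{g\cdot y}(T'),
\end{align*}
where the third equality uses the second fact and the fourth uses the first fact with $u=y$ and $v=g^{-1}\cdot x_{T'}$. Thus $g\cdot f_y=f_{g\cdot y}$; the mechanism is that moving the base point by $g^{-1}$ twists \emph{both} the pairing and the element $x_{T'}$ by $g^{-1}$, and the two twists cancel, leaving only $y$ acted on by $g$. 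Since $g$ sends each generator $f_y$ of $J$ to another generator $f_{g\cdot y}$, the ideal $J$ is $G$-stable — which is exactly what is needed to bring Theorem \ref{GIT} to bear in the remainder of the proof of Theorem \ref{main3}.
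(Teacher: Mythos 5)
Your proof is correct and follows essentially the same route as the paper: the paper writes $x=(T^{\ot n}\ot Id)(t)$ for a $G$-invariant tensor $t$ built from the regular characters and computes $(g\cdot f_y)(T)=(g^{-1}\cdot T)^{\ot n+i+j}(t\ot y)=T^{\ot n+i+j}(t\ot g\cdot y)$, which is exactly your two equivariance facts (naturality of the family $x_{T'}$, i.e.\ $G$-invariance of $t$ because $\l$ and $\la$ are fixed by all algebra automorphisms, and compatibility of the pairing with the $G$-action) packaged into one step. Your unpacking is a faithful, if more verbose, rendering of the same cancellation mechanism.
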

\begin{proof}
We have a (permuted) tensor product $t\in ((A\ot B)^*)^{\ot n}\ot (B^*)^{\ot j}\ot (A^*)^{\ot i}$ of sums of iterated comultiplications of copies of the regular characters of $A$ and $B$ such that
$$x=(T^{\ot n}\ot Id)(t).$$ This is true for any basic invariant, and we can show that it holds also for general invariants, by taking a large enough $n$.
Then $\langle y,x\rangle = T^{\ot n+i+j}(t\ot y)$ (after rearranging the tensor factors). 
We have that $$(g\cdot f_y)(x) = f_y(g^{-1}x) = (g^{-1}\cdot T)^{\ot n+i+j}(t\ot y) = T^{\ot n+i+j}(t\ot g\cdot y)$$  
where in the last step of the computation we have used the fact that $t$ is $G$-invariant.
This implies that $g\cdot f_y= f_{g\cdot y}$ as desired.
\end{proof}
	
From the proof of the lemma we also see that $G\cdot J = J$, since $G$ stabilizes a generating set of $J$, namely $\{f_y\}$ for $y\in (B^*)^{\ot i}\ot (A^*)^{\ot j}$.
We shall denote this vector space by $V$. Thus $V$ is a $G$-representation, isomorphic with $(B^*)^{\ot i}\ot (A^*)^{\ot j}$.
Since $J$ is stable under the action of $G$, we know that $V(J)\subset X$ is also stable under the action of $G$.
We therefore have that $x=0$ if and only if $V(J)$ contains the orbit $\Ow_T$ of $T$ in $X$, 
and this happens if and only if the image of $J$ is zero in $K[\Ow_T]$.
It follows from Theorem \ref{GIT} that the image of $J^G$ in $K[\Ow_T]^G$ is either zero or the entire ring (because $J$ defines a $G$-stable closed subset of $K[\Ow_T]$, and such a subset can only be 
the empty set or $\Ow_T$ itself).
Therefore, $x=0$ if and only if the image of $J^G$ is zero in $K[\Ow_T]$.
We thus need to find the invariants in $J=K[X]\cdot\{f_y\}$.
As before, this boils down to calculating the $G$-invariants in $$(A^*)^{\ot m}\ot (B^*)^{\ot m}\ot V\cong
(A^*)^{\ot m}\ot (B^*)^{\ot m}\ot (B^*)^{\ot i}\ot (A^*)^{\ot j}\cong $$ $$\cong (A^*)^{\ot m+j}\ot (B^*)^{\ot m+i}$$ for every $m$. 
If $W\in Hom_G(A^{\ot m+j}\ot B^{\ot m+i},K)$ then the value of the polynomial $p_W$ resulting from applying $W$ on $T$ can be described as follows:
first apply $W$ to $T^{\ot m}$ to get an element $\ti{W}\in Hom(A^{\ot j}\ot B^{\ot i},K)$
which belongs to $Inv^{i,j}$, by Lemma \ref{replacement}.
We have $p_W(T) = \langle \ti{W},x\rangle$ where the pairing is done using $T$, as usual.
Since $x$ is perpendicular to $Inv^{i,j}$ we get that $p_W(T)=0$.
But this means that all polynomials in $J^G$ vanish on $T$, and this implies that $x=0$, as desired.
\end{proof}
Theorem \ref{main3} can be proved verbatim also for the action of $\ti{G}$.
This gives us the first part of Theorem \ref{main4.5}:
\begin{proposition}\label{dualityprop2}
The pairing $\ti{Inv}^{i,j}\ot\ti{Inv}^{j,i}\rightarrow K$ is non-degenerate. 
\end{proposition}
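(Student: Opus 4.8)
The plan is to run the proof of Theorem \ref{main3} essentially verbatim, replacing $G$ by $\ti{G}$, replacing $Inv^{i,j}$ by $\ti{Inv}^{i,j}$, and replacing Lemma \ref{replacement} by its $\ti{G}$-analogue; since the statement is symmetric in the pair $(i,j)$, it suffices to show that any $x\in\ti{Inv}^{j,i}$ which is perpendicular to $\ti{Inv}^{i,j}$ is zero. First I would fix such an $x$ and translate its vanishing at $T$ into the vanishing at $T$ of the family of polynomials $f_y$ on $X$ indexed by $y\in (B^*)^{\ot i}\ot (A^*)^{\ot j}$, where $f_y=\langle y,x\rangle$, and set $J=(f_y)\triangleleft K[X]$. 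Writing $x=(T^{\ot n}\ot Id)(t)$ for a (permuted) tensor product $t$ of iterated comultiplications of \emph{irreducible} characters of $A$ and $B$ --- the description of $K[X]^{\ti{G}}$ in Section \ref{tiG-invariants} is precisely the statement that such a $t$ is $\ti{G}$-invariant --- the same computation as in the proof of Theorem \ref{main3} gives $g\cdot f_y=f_{g\cdot y}$ for every $g\in\ti{G}$, using only the $\ti{G}$-invariance of $t$; hence $\ti{G}\cdot J=J$.

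Next I would invoke geometric invariant theory for $\ti{G}$. The group $\ti{G}=\prod_i PGL(W_i)\times\prod_j PGL(V_j)$ is reductive, and since it has finite index in $G$ it still acts on $X$ with finite stabilizers, so Theorem \ref{GIT} applies with $\ti{G}$ in place of $G$; moreover by Corollary \ref{orbitstilde} the $\ti{G}$-orbit $\Ow_T$ of $T$ is the locus of Hopf algebras isomorphic to $H$ together with a fixed labeling of the irreducible (co)representations. Since $J$ is $\ti{G}$-stable, $V(J)\subseteq X$ is a $\ti{G}$-stable closed subset, and $x=0$ if and only if $V(J)\supseteq\Ow_T$, i.e. if and only if the image of $J$ in $K[\Ow_T]$ is zero. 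By Theorem \ref{GIT} the image of $J^{\ti{G}}$ in $K[\Ow_T]^{\ti{G}}$ is either zero or the whole ring, so it suffices to show that every element of $J^{\ti{G}}$ vanishes at $T$.

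Finally I would compute $J^{\ti{G}}$, which, exactly as in the proof of Theorem \ref{main3}, reduces to evaluating on $T$ the $\ti{G}$-invariants of $(A^*)^{\ot m+j}\ot (B^*)^{\ot m+i}$ for all $m$: for $W\in Hom_{\ti{G}}(A^{\ot m+j}\ot B^{\ot m+i},K)$ the associated polynomial satisfies $p_W(T)=\langle\ti{W},x\rangle$ with $\ti{W}=(T^{\ot m}\ot Id)(W)$. The one point that needs a separate remark is the $\ti{G}$-version of Lemma \ref{replacement}, namely $\ti{W}\in\ti{Inv}^{i,j}$, and here the $\ti{G}$-case is actually easier than the $G$-case: by Section \ref{tiG-invariants}, $Hom_{\ti{G}}(A^{\ot m+j}\ot B^{\ot m+i},K)$ is spanned by the expressions $T_1\cdots T_s(\psi_{i_1}\ot\cdots\ot\phi_{j_l})$ built from irreducible characters of $A$ and $B$ by comultiplications and permutations, which lie in $\ti{Inv}^{m+i,m+j}$ by Definition \ref{invs}; applying the evaluation map $T^{\ot m}$ and using that the collection $\ti{Inv}$ is closed under evaluation keeps us in $\ti{Inv}^{i,j}$. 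Since $x$ is perpendicular to $\ti{Inv}^{i,j}$ this gives $p_W(T)=0$, hence every polynomial in $J^{\ti{G}}$ vanishes at $T$, and therefore $x=0$. I do not expect a genuine obstacle: the only work is bookkeeping --- the rearrangements of tensor factors and the passage between characters of $A$, $B$ and characters of $H$, $H^*$ --- and this is identical to what already appears in Propositions \ref{G-invproposition} and \ref{connection} and in the proof of Theorem \ref{main3}.
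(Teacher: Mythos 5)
Your proposal is correct and is precisely the argument the paper intends: the paper's own proof of this proposition consists of the single remark that the proof of Theorem \ref{main3} goes through verbatim with $\ti{G}$ in place of $G$, and you have supplied exactly that substitution, correctly observing that the $\ti{G}$-version of Lemma \ref{replacement} is in fact easier since the spanning set of $Hom_{\ti{G}}(A^{\ot(j+m)}\ot B^{\ot(i+m)},K)$ from Section \ref{tiG-invariants} lands in $\ti{Inv}^{m+i,m+j}$ directly by Definition \ref{invs}, with no averaging over $G/\ti{G}$ needed.
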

\end{section}

\begin{section}{Construction of symmetric monoidal categories out of invariant subspaces.
A proof of Theorem \ref{main4}}\label{Tannaka}
In this section we will construct a symmetric monoidal category out of the spaces $Inv^{i,j}$.
We will then use Tannaka Reconstruction Theorem for symmetric monoidal categories to study this category,
and we shall prove that $Inv^{i,j}=(H^{i,j})^{Aut_{Hopf}(H)}$.
Let then $\C_1$ be the following category: the objects of $\C_1$ are the vector spaces $H^{i,j}$.
The morphism spaces are given by $$Hom_{\C_1}(H^{i,j},H^{a,b}) = Inv^{j+a,i+b}.$$
Since $Inv^{j+a,i+b} \subseteq H^{j+a,i+b} \cong Hom_K(H^{i,j},H^{a,b})$,
we can define composition of morphisms just as composition of linear maps.
The way we have defined the spaces $Inv^{i,j}$ ensures us that this composition will be well defined.
Since the identity map in $H$ can be written by using $\l$ and $\la$, 
we know that $\C_1$ has identity maps as required. 
Moreover, all the $Hom$-sets in $\C_1$ are finite dimensional $K$-vector spaces,
and composition of morphisms is $K$-bilinear.

The category $\C_1$ is also a rigid monoidal category (rigidity means that each objects has a dual): the tensor product is the same as that in vector spaces,
that is: $H^{i,j}\ot H^{a,b} = H^{i+a,j+b}$, the tensor unit is given by $H^{0,0}$, 
and the dual is given by $(H^{i,j})^* = H^{j,i}$.
Since we have used the action of the symmetric groups on tensor products in the construction of the spaces $Inv^{i,j}$
we get that $\C_1$ is a rigid symmetric monoidal category.
We also have an obvious symmetric monoidal functor $F_1:\C_1\ra Vec_K$ which sends $H^{i,j}$ to $H^{i,j}$

We would like to construct an abelian rigid symmetric monoidal category out of $\C_1$ in order to apply Tannaka Reconstruction Theorem. 
For this, we define $\C_2$ to be the additive envelope of $\C_1$: objects of $\C_2$ are 
formal direct sums of objects of $\C_1$, and morphisms are given by suitable matrices of morphisms in $\C_1$.
The functor $F_1$ can be extended in a natural way to a functor $F_2:\C_2\ra Vec_K$
(this follows easily from the fact that $Vec_K$ has direct sums). 
We define the category $\C$ to be the Karoubian envelope of $\C_2$: objects of $\C$ will be pairs 
$(P,p)$ where $P$ is an object of $\C_2$ and $p:P\ra P$ satisfies $p^2=p$,
and morphisms $f:(P,p)\ra (Q,q)$ are the morphisms $f:P\ra Q$ which satisfy $f=qfp$.
Intuitively, we think of $(P,p)$ as the image of $p:P\ra P$.
Again, since all projections in $Vec_K$ have images, the functor $F_2$ can be extended naturally to a functor $F:\C\ra Vec_K$ which sends $(P,p)$ to the vector space $Im(p)$.

We would like to prove that the category $\C$ is abelian. 
We begin with proving the following lemma:
\begin{lemma}\label{projections1}
Let $f:C\ra D$ be a morphism in $\C_2$.
Then $f$ has a kernel and a cokernel in $\C$, where we consider $f$ as a morphism 
$f:(C,1_C)\ra(D,1_D)$.
\end{lemma}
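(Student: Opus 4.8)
The plan is to show that $\C$ is a semisimple $K$-linear category --- every object a finite direct sum of objects with endomorphism ring $K$ --- and then to read off kernels and cokernels from that structure. Concretely there are three things to do: check that every $\mathrm{Hom}$-space of $\C$ is finite-dimensional over $K$; show that $\mathrm{End}_\C(C)$ is a semisimple algebra for each object $C$; and deduce the assertion about $f$ from the resulting structure theory.

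Finite-dimensionality is immediate: objects of $\C$ are retracts of finite direct sums of the $H^{i,j}$, and $\mathrm{Hom}_{\C_1}(H^{i,j},H^{a,b}) = Inv^{j+a,i+b}$ is a finite-dimensional subspace of $H^{j+a,i+b}$, so the additive and Karoubian completions preserve finiteness. For semisimplicity, consider for objects $C,D$ of $\C$ the bilinear form
$$\mathrm{Hom}_\C(C,D)\otimes_K \mathrm{Hom}_\C(D,C)\longrightarrow K,\qquad (u,v)\longmapsto \operatorname{tr}\bigl(F(v)\circ F(u)\bigr),$$
where $F\colon\C\ra Vec_K$ is the symmetric monoidal functor of this section, which is moreover faithful since on $\C_1$ it is the inclusion $Inv^{j+a,i+b}\hookrightarrow \mathrm{Hom}_K(H^{i,j},H^{a,b})$. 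On objects of $\C_1$ this form equals, after transport through $\mathrm{Hom}_{\C_1}(H^{i,j},H^{a,b})=Inv^{j+a,i+b}\subseteq H^{j+a,i+b}\cong\mathrm{Hom}_K(H^{i,j},H^{a,b})$, the evaluation pairing $Inv^{j+a,i+b}\otimes Inv^{i+b,j+a}\ra K$ --- this is the standard fact that trace-of-composition corresponds to the evaluation pairing on $\mathrm{Hom}$-spaces, together with a bookkeeping check on which tensor factors get paired --- and is therefore non-degenerate by Theorem \ref{main3}. Over the additive completion the form is an orthogonal direct sum of such pairings, hence still non-degenerate; over the Karoubian completion, if $C=(P,p)$ and $D=(Q,q)$ then $\mathrm{Hom}_\C(C,D)=q\,\mathrm{Hom}_{\C_2}(P,Q)\,p$, and a short computation with cyclicity of the trace and the identities $pa=a=ap$ for $a\in p\,\mathrm{End}_{\C_2}(P)\,p$ shows the restricted form is again non-degenerate. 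Thus the form on $E:=\mathrm{End}_\C(C)$ is non-degenerate. If now $J\triangleleft E$ is a nilpotent ideal and $j\in J$, $g\in E$, then $g\circ j\in J$ is nilpotent, so $F(g\circ j)$ is a nilpotent operator and $\operatorname{tr}(F(g)\circ F(j))=0$; hence $J$ lies in the radical of the form, so $J=0$. Being a finite-dimensional algebra over the algebraically closed field $K$ of characteristic zero with no nonzero nilpotent ideal, $\mathrm{End}_\C(C)$ is semisimple, in fact a product of matrix algebras over $K$.

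The rest is the usual structure theory. Decomposing $1_C$ into primitive orthogonal idempotents of $\mathrm{End}_\C(C)$ and using that $\C$ is Karoubian, every object is a finite direct sum of objects $S$ with $\mathrm{End}_\C(S)=K$; and if $S,T$ are two such objects with $0\neq\phi\colon S\ra T$, the non-degeneracy above gives $\psi\colon T\ra S$ with $\psi\circ\phi$ a nonzero scalar, whence $\phi\circ\psi\in\mathrm{End}_\C(T)=K$ is also nonzero and $\phi$ is an isomorphism --- so $\mathrm{Hom}_\C(S,T)=0$ for $S\not\cong T$. Given finally $f\colon C\ra D$ in $\C_2$, decomposing $C$ and $D$ into such simple objects and grouping summands by isomorphism class identifies $f$ with a finite tuple of matrices over $K$; the direct sum of the (co)kernels of these matrices, assembled from the corresponding summands of $C$ (resp. $D$), is seen --- by computing $\mathrm{Hom}$ into and out of it via the vanishing just established --- to satisfy the universal property of $\ker f$ (resp. $\operatorname{coker} f$) in $\C$.

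The step I expect to be the real obstacle is the semisimplicity of $\mathrm{End}_\C(C)$: identifying the composition-trace form with the pairing of Theorem \ref{main3} (lining up all the evaluations, permutations and dualities) and then verifying that non-degeneracy survives both the additive and the Karoubian completion. Once that is in place, the remaining arguments are the routine structure theory of semisimple $K$-linear categories.
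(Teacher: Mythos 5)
Your proof is correct, and its crux --- the non-degeneracy of the trace-of-composition form on endomorphism algebras, deduced by identifying it with the pairing of Theorem \ref{main3} and checking that non-degeneracy survives the additive and Karoubian completions, which then forces those algebras to be semisimple --- is exactly the paper's argument. The only divergence is the finish: the paper first reduces to the case $C=D$ via the endomorphism $(c,d)\mapsto(0,f(c))$ of $C\oplus D$ and then factors $f=rp$ with $r$ invertible and $p$ idempotent inside the semisimple algebra $\mathrm{End}_{\C_2}(C)$, so that $\ker f=\ker p$ exists because projections split in the Karoubian envelope, whereas you develop the full structure theory (primitive idempotents, simples with $\mathrm{End}=K$, Schur's lemma, matrices over $K$); both endgames are routine once semisimplicity is in hand, and yours has the side benefit of already exhibiting $\C$ as a semisimple category.
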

\begin{proof}
It will be enough to prove that $f$ has a kernel. 
Proving that $f$ has a cokernel can be done in a dual way.
So let $f:C\rightarrow D$ be a morphism in $\C_2$.
Consider the object $E=C\oplus D$. 
We have an endomorphism $\ti{f}:E\rightarrow E$ given symbolically by $(c,d)\mapsto (0,f(c))$.
If $\ti{f}$ has a kernel in $\C$, then it will be of the form $Ker(f)\oplus D$.
Therefore, since projections have kernels and images in $\C$, 
if we will prove that $\ti{f}$ has a kernel, then we will know that $f$ has a kernel.
So we can reduce to the case $C=D$.

Let us consider now the finite dimensional $K$-algebra $$R:=End_{\C_2}(C,C)\cong Hom_{\C_2}(K,C\ot C^*).$$
We have a canonical map $tr:R\rightarrow K$ induced by the evaluation $C\ot C^*\rightarrow K$.
The algebra $R$ can be thought of as a subalgebra of $End_K(F(C))$. 
As such, the functional $tr$ is the usual trace of endomorphisms of $F(C)$ restricted to $R$.
We know that if we take $C=H^{i,j}$ then the pairing 
\begin{equation}\label{pairing} R\ot R\stackrel{m_R}{\ra}R\stackrel{tr}{\ra} K\end{equation} will be non-degenerate.
This follows directly from the fact that the pairing $Inv^{i+j,i+j}\ot Inv^{i+j,i+j}\ra K$
is non-degenerate (by Theorem \ref{main3}).
Now, since every object of $\C_2$ is a direct summand of a direct sum of objects of the form $H^{i,j}$
we see that Equation \ref{pairing} will give us a non-degenerate pairing for every $C$.
But this is equivalent to $R$ being semisimple. 
Since $R$ is semisimple, and $K$ is algebraically closed, we know by Wedderburn's Theorem that $R$ is isomorphic with a product of matrix algebras.
This means that $f$ can be written as the composition $f=rp$ where $r\in R$ is invertible, and $p\in R$ is a projection. 
We can identify between $Ker(f)$ and $Ker(p)$.
Since $p$ is a projection, it has a kernel in $\C$ and we are done.
\end{proof}
The next lemma we need will relate invertibility of morphisms in $\C$ and in $Vec_K$. 
We claim the following:
\begin{lemma}
Let $f:C\ra D$ be a morphism in $\C$. Then $f$ is invertible if and only if $F(f):F(C)\ra F(D)$
is invertible in $Vec_K$.
\end{lemma}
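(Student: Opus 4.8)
The plan is to prove both implications of the equivalence, using that $F$ is a faithful monoidal functor which reflects certain structure. One direction is immediate: if $f$ is invertible in $\C$ with inverse $g$, then applying the functor $F$ gives $F(g)F(f) = F(1_C) = 1_{F(C)}$ and $F(f)F(g) = 1_{F(D)}$, so $F(f)$ is invertible in $Vec_K$. The content is in the converse.

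For the converse, suppose $F(f):F(C)\ra F(D)$ is invertible. First I would reduce to the case $C = D = (P,1_P)$ with $P$ an object of $\C_2$: replacing $f:C\ra D$ by $\ti{f}:C\oplus D\ra C\oplus D$ as in the proof of Lemma \ref{projections1} does not change whether $f$ or $F(f)$ is invertible (one must be slightly careful, since $\ti{f}$ itself is never invertible; instead, after passing to the direct sum one works with the idempotent truncations, using that in $\C$ every morphism $f$ between objects of $\C_2$ factors as $f = rp$ with $r$ invertible and $p$ a projection, by the semisimplicity argument of Lemma \ref{projections1}). Concretely, write $f = rp$ with $r\in R = \End_{\C}(C)$ invertible and $p\in R$ a projection. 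Then $F(f) = F(r)F(p)$ with $F(r)$ invertible in $Vec_K$, so $F(f)$ invertible forces $F(p)$ invertible; but $F(p)$ is itself a projection on the finite-dimensional space $F(C)$, so $F(p)$ invertible means $F(p) = 1_{F(C)}$, i.e. $F(p) = F(1_C)$. Since $F$ is faithful on $\C$ — which follows from the non-degeneracy of the trace pairing \eqref{pairing}, exactly as in Lemma \ref{projections1}, because $F$ realizes $\End_{\C}(C)$ as a subalgebra of $\End_K(F(C))$ on which the trace form is non-degenerate, hence the restriction map is injective — we conclude $p = 1_C$, so $f = r$ is invertible in $\C$.

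The main obstacle I anticipate is handling the bookkeeping of the reduction to endomorphisms cleanly: the naive object $\ti{f}$ on $C\oplus D$ is nilpotent, not invertible, so one cannot literally ``reduce to $C = D$'' for the invertibility statement the way one does for kernels. The right fix is to observe that any $f:C\ra D$ in $\C$ with $C,D$ objects of $\C_2$ can be transported to an endomorphism: choose an object $E$ containing both $C$ and $D$ as direct summands (e.g. $E = C\oplus D$) with inclusions $\iota_C,\iota_D$ and projections $\pi_C,\pi_D$, and consider $\hat f := \iota_D f\pi_C \in \End_{\C}(E)$; then $f$ is invertible iff $\hat f$ restricts to an isomorphism between the summands $C$ and $D$ of $E$, which by the matrix-algebra description of the semisimple algebra $\End_{\C}(E)$ is equivalent to a rank condition on $\hat f$ that $F$ detects. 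Once this translation is set up, the argument is just: semisimplicity of $\End_{\C}(E)$ (from Theorem \ref{main3}) plus faithfulness of $F$ on each $\End_{\C}(E)$ (again from Theorem \ref{main3}) plus the elementary fact that an idempotent in $\End_K(V)$ is invertible iff it is the identity.

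In summary, the two ingredients both come from the non-degeneracy of the pairing in Theorem \ref{main3}: it makes every endomorphism algebra $\End_{\C}(C)$ semisimple (so morphisms factor through projections after an invertible twist), and it makes $F$ faithful (so an equality after applying $F$ lifts back to $\C$). Combining these with the triviality that invertible projections are identities yields the claim; the only real care required is the passage from a general morphism to an endomorphism, which I would phrase via the inclusion/projection maps of a direct sum rather than via the nilpotent $\ti f$.
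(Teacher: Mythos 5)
Your proposal is correct, but it takes a genuinely different route from the paper. The paper first handles the case $dim_K F(C)=dim_K F(D)=1$ using the non-degeneracy of the pairing $Hom_{\C}(K,C^*\ot D)\ot Hom_{\C}(K,D^*\ot C)\ra K$, and then reduces the general case to it by passing to the top exterior power $\bigwedge^n f$ and writing the inverse of $f$ explicitly as a categorical Cramer's rule (coevaluation, projection onto $\bigwedge^n D$, composition with $(\bigwedge^n f)^{-1}$ and $(f^*)^{\ot (n-1)}$, then evaluation). You instead exploit only the semisimplicity of endomorphism algebras from Lemma \ref{projections1} together with faithfulness of $F$: with $E=C\oplus D$ and $R=End_{\C}(E)$ semisimple, $F(E)$ is a faithful $R$-module in which every simple $R$-module occurs, so bijectivity of $F(f)$ forces each matrix block of the corner element $\hat f=\iota_D f\pi_C\in e_D R e_C$ to be square and of full rank, which produces a two-sided corner inverse $\hat g\in e_C R e_D$ and hence $g=\pi_C \hat g\,\iota_D$ inverting $f$. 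Both arguments rest on Theorem \ref{main3}; yours trades the monoidal machinery (duals, symmetry, and the identification $F(\bigwedge^n C)\cong\bigwedge^n F(C)$) for linear algebra in semisimple algebras, which is arguably more elementary and works in any $K$-linear category with semisimple finite-dimensional endomorphism rings and a faithful functor to $Vec_K$, whereas the paper's version constructs the inverse as an explicit composite of structural morphisms. Two small things to tidy: the factorization $f=rp$ in your second paragraph only typechecks for endomorphisms, so the corner argument of your last paragraph is the actual proof and the phrase ``a rank condition that $F$ detects'' should be expanded as above; and since the lemma is applied to arbitrary objects $(P,p)$ of $\C$, you should note that $End_{\C}((P,p)\oplus (Q,q))$ is a corner of the semisimple algebra $End_{\C_2}(P\oplus Q)$ and is therefore still semisimple.
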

\begin{proof}
We begin with the case where $F(C)$ and $F(D)$ are one dimensional. 
In this case  $Hom_{\C}(C,D)\cong Hom_{\C}(K,C^*\ot D)$ is one dimensional (and spanned by $f$), and $Hom_{\C}(D,C)\cong Hom_{\C}(K,D^*\ot C)$ is at most one dimensional.
But we know that $(C^*\ot D)^*\cong D^*\ot C$, and therefore the pairing 
$Hom_{\C}(K,C^*\ot D)\ot Hom_{\C}(K,D^*\ot C)\rightarrow K$ is non-degenerate (by the same argument used in Lemma \ref{projections1}).
This implies that there exists a morphism $g:D\rightarrow C$ such that $gf\neq 0$ and $fg\neq 0$.
By changing $g$ if necessary, we can assume that $gf = 1_C$ and $fg=1_D$, so $f$ is invertible.

Assume now that $dim_K F(C)=dim_K F(D)=n$ (the dimensions are the same, since $F(f)$ is an isomorphism).
Then consider $\bigwedge^n f: \bigwedge^n C\rightarrow \bigwedge^n D$.
(since $\C$ is a symmetric monoidal category in which projections have kernels
we can freely talk about $\bigwedge^n C$: it will be the image of the idempotent $\frac{1}{n!}\sum_{\sigma\in S_n}(-1)^{\sigma}\sigma$ in $End_{\C}(C^{\ot n})$.
Since $F$ is a symmetric monoidal functor we are guaranteed that $F(\bigwedge^n C) \cong \bigwedge^n F(C)$. A similar statement holds for $D$).
This is an isomorphism between one dimensional spaces, and is therefore invertible.
Now the inverse of $f$ can be written as the following composition:
$$D\rightarrow D^{\ot n}\ot (D^*)^{\ot (n-1)}\rightarrow \bigwedge^n D\ot (D^*)^{\ot (n-1)}\rightarrow \bigwedge^n C\ot (D^*)^{\ot (n-1)}\rightarrow C$$
where the first map is the coevaluation on $D^{\ot (n-1)}$, the second map is the projection $D^{\ot n}\rightarrow \bigwedge^n D$,
the third map is the composition of $(\bigwedge^n f)^{-1}$ with $(f^*)^{\ot (n-1)}$ and the last map is the composition of the inclusion $\bigwedge^n C\rightarrow C^{\ot n}$
with evaluation on $C^{\ot (n-1)}$ (the last claim is just a categorical formulation of Cramer Rule).
This shows that $f$ has an inverse in $\C$, and we are done.
\end{proof}
We can now prove the following proposition:
\begin{proposition}
The category $\C$ is abelian
\end{proposition}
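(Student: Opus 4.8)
The plan is to deduce that $\C$ is abelian from the structural facts already established: $\C$ is a rigid symmetric monoidal $K$-linear category which is idempotent-complete (being a Karoubian envelope), whose $\mathrm{Hom}$-spaces are finite-dimensional, and — crucially — whose endomorphism algebras $\mathrm{End}_\C(C)$ are all semisimple (this was proved inside Lemma \ref{projections1}). I would first record that the category has finite biproducts (inherited from $\C_2$) and a zero object, and that by Lemma \ref{projections1} every morphism between objects of the form $(C,1_C)$ has a kernel and a cokernel in $\C$; since every object of $\C$ is a direct summand of such an object, and kernels/cokernels of a morphism $f\colon (P,p)\to(Q,q)$ can be computed by viewing $f$ as a morphism $pP\to qQ$ sitting inside $P\to Q$ and intersecting with the relevant idempotent images, every morphism in $\C$ has a kernel and a cokernel. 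So $\C$ is a preadditive category with biproducts, kernels and cokernels; it remains to check the exactness axiom, namely that every monomorphism is a kernel and every epimorphism is a cokernel, equivalently that for every $f$ the canonical map $\mathrm{coim}(f)\to\mathrm{im}(f)$ is an isomorphism.

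The key device for the exactness axiom is the second lemma just proved: a morphism in $\C$ is invertible if and only if its image under the faithful symmetric monoidal functor $F\colon\C\to Vec_K$ is invertible. I would argue as follows. Given $f\colon C\to D$ in $\C$, form the factorization $C\twoheadrightarrow \mathrm{coim}(f)\xrightarrow{\bar f}\mathrm{im}(f)\hookrightarrow D$ obtained from the kernel and cokernel constructed above; applying $F$, and using that $F$ is exact on the kernel/cokernel diagrams we built by hand out of idempotent images (here one uses that $F$ sends $(P,p)$ to $\mathrm{Im}(p)$ and preserves finite biproducts, so it carries our explicit kernels and cokernels to the genuine kernels and cokernels in $Vec_K$), we get that $F(\bar f)$ is the canonical map $\mathrm{coim}(F(f))\to\mathrm{im}(F(f))$ in $Vec_K$, which is an isomorphism since $Vec_K$ is abelian. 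By the invertibility lemma, $\bar f$ is then an isomorphism in $\C$, which is exactly the exactness axiom. Combining this with the preadditive-with-biproducts-kernels-cokernels structure, $\C$ is abelian.

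I would also note the one point needing a little care: to invoke Lemma \ref{projections1} and the invertibility lemma for \emph{arbitrary} objects of $\C$ rather than just for the generating objects $H^{i,j}$, one uses repeatedly that an arbitrary object of $\C$ is a retract of a finite direct sum of objects $H^{i,j}$, together with the fact (established in the proof of Lemma \ref{projections1}) that the trace pairing on $\mathrm{End}_\C(C)$ stays non-degenerate under passing to direct sums and direct summands, so that $\mathrm{End}_\C(C)$ is semisimple for every object $C$ of $\C$. This semisimplicity is what guarantees that the hand-built factorization through idempotent images is available for every $f$, not just that kernels exist abstractly.

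The main obstacle is bookkeeping rather than conceptual: one must verify that the kernel and cokernel of a general morphism $(P,p)\to(Q,q)$ in the Karoubian envelope can be described concretely enough that $F$ preserves them, so that the exactness argument in $Vec_K$ can be transported back along $F$. Once kernels and cokernels are presented as images of explicit idempotents in $\mathrm{End}_\C$ — which is possible precisely because these endomorphism algebras are semisimple, hence every morphism factors as an invertible map composed with a projection as in Lemma \ref{projections1} — the functor $F$, sending $(P,p)$ to $\mathrm{Im}(p)$, visibly commutes with these constructions, and the proof closes. I expect the write-up to consist of (i) biproducts and zero object, (ii) existence of kernels and cokernels for all morphisms via reduction to Lemma \ref{projections1}, (iii) the coimage-to-image comparison, reduced to $Vec_K$ via $F$ and pulled back via the invertibility lemma, and (iv) the conclusion that $\C$ is abelian.
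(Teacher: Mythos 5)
Your proposal is correct and follows essentially the same route as the paper: establish kernels and cokernels for all morphisms by reducing to Lemma \ref{projections1} via idempotents, then verify the exactness axiom by noting that $F$ preserves the explicitly constructed kernels and cokernels, so the comparison map becomes an isomorphism in $Vec_K$, and pull this back along the lemma stating that $F$ detects invertibility. The only cosmetic difference is that you phrase the exactness axiom as ``$\mathrm{coim}(f)\to\mathrm{im}(f)$ is an isomorphism'' while the paper phrases it as ``for a monomorphism $f$ the map $C\to Ker(Coker(f))$ is an isomorphism''; these are equivalent and the argument is the same.
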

\begin{proof}
We begin by proving that any morphism in $\C$ has a kernel and cokernel.
By a duality argument, it will be enough to prove that if $f:(C,p)\ra (D,q)$ is a morphism in $\C$ then it has a kernel.
We can consider $f$ as a morphism in $\C_1$ which satisfies $f= qfp$.
Then we have seen that $\ti{f}:(C,1_C)\ra (D,1_D)$ has a kernel $Ker(\ti{f})$ in $\C$.
A direct verification shows that $p$ induces an endomorphism $\ti{p}:Ker(\ti{f})\ra Ker(\ti{f})$ which is also a projection, 
and the kernel of $1-\ti{p}$ will be the desired kernel of $f$ (we use here the fact that $1-\ti{p}$ is a projection, and projections have kernels in $\C$).

So we see that all morphisms in $\C$ have kernels and cokernels in $\C$.
In order to prove that $\C$ is indeed abelian, we need to prove that if $f:C\ra D$ is a monomorphism (epimorphism)
then the induced map $C\ra Ker(Coker(f))$ ($Coker(Ker(f))\ra D$) is an isomorphism.
We will concentrate on the case where $f$ is a monomorphism. 
By construction of the functor $F$ we know that if $p:C\ra C$ is a projection,
then $F(Ker(p)) = Ker(F(p))$.
By the proof of Lemma \ref{projections1} we see that $F(Ker(g))\cong Ker(F(g))$ in a natural way for every morphism $g$ in $\C$.
But then we have that after applying $F$ to $C\ra Ker(Coker(f))$ 
we get the map $F(C)\ra Ker(Coker(F(f)))$ which is an isomorphism.
We have seen that this implies that the original map in $\C$ is an isomorphism, so we are done.
\end{proof}

The category $\C$ has therefore a very rich structure: it is a rigid symmetric monoidal $K$-linear category.
Moreover, we have a symmetric monoidal functor $F:\C\ra Vec_K$.
By construction the functor $F$ is faithful (that is- the map $Hom_{\C}(C,D)\ra Hom_{Vec_K}(F(C),F(D))$ is injective)
and exact (this follows from the fact that $F$ preserves kernels and cokernels).
Tannaka Reconstruction Theorem now tells us the following:
\begin{theorem}(see Theorem 2.11 in \cite{DM})
Let $\C$ and $F$ be as above. Let $A=Aut_{\ot}(F)$.
Then for every $C\in\C$ the vector space $F(C)$ is an $A$-representation in a natural way,
and the functor $\ti{F}:\C\ra Rep_K-A$ is an equivalence of symmetric monoidal $K$-linear categories.
\end{theorem}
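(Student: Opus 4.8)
The plan is to reconstruct from the fiber functor $F$ a commutative Hopf algebra $L$ whose category of comodules is equivalent to $\C$, and then to identify the affine group scheme $Spec\,L$ with $A=Aut_\ot(F)$, so that $Rep_K-A=Comod_L$. The hypotheses established earlier --- that $\C$ is abelian, rigid, and symmetric monoidal over $K$, and that $F$ is faithful, exact, $K$-linear and symmetric monoidal --- are precisely the axioms of a neutral Tannakian category, and the argument follows the reconstruction procedure of Deligne and Milne.

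First I would build $L$ as the coend
$$L=\int^{C\in\C}F(C)^*\ot F(C),$$
which exists because each object of $\C$ is a summand of a finite sum of the $H^{i,j}$ and so every $F(C)$ is finite dimensional; concretely $L$ is the colimit of the spaces $F(C)^*\ot F(C)\cong End_K(F(C))^*$ over all objects, subject to the relations imposed by morphisms of $\C$. Dualizing composition in each $End_K(F(C))$ gives $L$ a comultiplication, and the universal dinatural map $F(C)^*\ot F(C)\ra L$ produces, together with coevaluation, a coaction $F(C)\ra F(C)\ot L$; this lifts $F$ to $\ti{F}:\C\ra Comod_L$. The monoidal isomorphisms $F(C\ot D)\cong F(C)\ot F(D)$ endow $L$ with a product making it a bialgebra, the rigidity of $\C$ supplies an antipode, and the symmetry forces $L$ to be commutative. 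Hence $A:=Spec\,L$ is an affine group scheme, and $Comod_L$ is by definition $Rep_K-A$.

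Next I would show $\ti{F}$ is an equivalence. Faithfulness of $F$ gives injectivity of $Hom_\C(C,D)\ra Hom_L(F(C),F(D))$, and surjectivity is the reconstruction statement that every $L$-colinear map already comes from $\C$; this is proved by a density argument, writing $L$ as the filtered union of its subcoalgebras $End_K(F(C))^*=\ti{F}(C\ot C^*)$ and using exactness of $F$. For essential surjectivity I would embed a finite dimensional comodule $M$ into the cofree comodule $U\ot L$ on its underlying space $U$, realize it as a subcomodule of finitely many copies of $L$, and then use that each of these pieces is $\ti{F}$ of an object of $\C$ together with the fact that $\C$ is abelian to produce $M=\ti{F}(N)$ as an appropriate kernel or cokernel. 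Finally I would match $Spec\,L$ with $Aut_\ot(F)$: an algebra map $L\ra R$ is, by the coend description, a dinatural family of elements of $End_R(F(C)\ot R)$; multiplicativity of the map translates into compatibility with $\ot$, and invertibility is automatic from the Hopf structure, so such maps are exactly the $R$-points of $Aut_\ot(F)$.

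The step I expect to be the main obstacle is essential surjectivity together with establishing that $L$ is genuinely a Hopf algebra and not merely a bialgebra. Essential surjectivity is where the finiteness and exactness hypotheses are indispensable --- without them the coend fails to control all comodules --- and it is the place where the abelian and rigid structure of $\C$ is really used. Constructing the antipode and verifying commutativity require transporting the duality and symmetry constraints of $\C$ through $F$ and checking the pentagon- and hexagon-type coherence diagrams; these are the delicate but routine verifications that turn $Spec\,L$ into a group scheme. Once they are in place, the identification $Spec\,L=Aut_\ot(F)$, and hence the assertion that $\ti{F}:\C\ra Rep_K-A$ is an equivalence of symmetric monoidal $K$-linear categories, follow formally from the universal property of the coend.
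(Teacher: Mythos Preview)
The paper does not supply its own proof of this theorem: it is quoted verbatim from Deligne--Milne (Theorem~2.11 of \cite{DM}) and used as a black box, with the subsequent discussion devoted to identifying $A=Aut_\ot(F)$ with $Aut_{Hopf}(H)$ in the specific situation at hand. So there is nothing in the paper to compare your argument against.

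That said, your outline is a reasonable sketch of the standard Tannakian reconstruction, phrased in the coend language rather than the original Deligne--Milne presentation. The two are equivalent: Deligne--Milne construct the regular representation directly as a filtered union of the ``matrix coefficient'' coalgebras $F(C)\ot F(C)^*$, which is precisely your coend computed over a generating family, and they obtain the Hopf structure and the equivalence by the same mechanism (monoidality gives the product, rigidity the antipode, symmetry the commutativity, and the comparison with $Aut_\ot(F)$ is the functor-of-points argument you describe). Your identification of essential surjectivity and the antipode construction as the genuinely delicate steps is accurate. One small caution: in the generality of the cited theorem $A$ is an affine group \emph{scheme}, not necessarily an algebraic group, so ``$Rep_K\text{-}A$'' should be read as finite-dimensional $L$-comodules; in the paper's application $A$ turns out to be finite, so this distinction evaporates.
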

The natural way in which $F(C)$ is an $A$-representation is the following:
every $a\in A$ is an isomorphism $a:F\ra F$. In particular, we will get an invertible map $a_C:F(C)\ra F(C)$
and this will give us an $A$-representation structure on $F(C)$.
In order to apply the theorem, we need to describe the group $A=Aut_{\ot}F$.
If $a\in A$, then the action of $a$ on all $F(C)$ can be deduced by its action on $F(H^{1,0})$.
This follows easily from the fact that $F$ is a monoidal functor.
We can thus consider $A$ as a subgroup of $GL(H)$.
Now, since $Hom_{\C}(K,H^{i,j}) = Inv^{i,j}$,
we have that $Inv^{i,j} = (H^{i,j})^A$.
But the multiplication and comultiplication of $H$ can be written as elements of 
$Inv^{1,2}\subseteq H^{1,2}$ and $Inv^{2,1}\subseteq H^{2,1}$ respectively. 
This implies that every $a\in A$ preserves the algebra and coalgebra structure of $H$,
and therefore $A\subseteq Aut_{Hopf}(H)$.
On the other hand, if we have a Hopf automorphism $a$ of $H$,
then it is easy to see that it fixes $Inv^{i,j}$ pointwise for every $i$ and $j$,
and a careful examination shows that it induces an automorphism of $F_1$, $F_2$ and $F$.
We thus have that $A=Aut_{Hopf}(H)$.
An immediate corollary of this discussion is Theorem \ref{main4}:
we have $Inv^{i,j} = (H^{i,j})^{Aut_{Hopf}(H)}$.
Notice that instead of constructing the category $\C$ using the subspaces $Inv^{i,j}$,
we could have used the subspaces $\ti{Inv^{i,j}}$.
This would result in a category with more morphisms, which is equivalent to the category 
of $Aut_{Hopf}^0(H)$-representations, where $Aut_{Hopf}^0(H)\subseteq Aut_{Hopf}(H)$ is the subgroup of all Hopf automorphisms of $H$ 
which fix all the irreducible characters and cocharacters of $H$. In particular we get Theorem \ref{main4.5}: 
$\ti{Inv^{i,j}} = (H^{i,j})^{Aut^0_{Hopf}(H)}$.

The category $\C$ constructed here can be constructed more generally for any algebraic structure, not just for Hopf algebras.
This construction is carried out in \cite{Meir1} (the construction there is more general, and gives a category defined over $K_0$ instead of over $K$). 
The assumption that the pairing between $Inv^{i,j}$ and $Inv^{j,i}$ is non-degenerate is not necessary for the construction of the category,
but it is necessary in order to to prove here the equality $Inv^{i,j} = (H^{i,j})^{Aut_{Hopf}(H)}$.
Moreover, the construction in \cite{Meir1} can also be used to construct a ``generic form'' of $H$ over a finitely generated commutative $K_0$-algebra.
\end{section}
\begin{section}{Finiteness of the number of orders}\label{finiteorders}
Let now $L\subseteq K$ be a number field.
Assume that $H$ is a semisimple Hopf algebra defined over $L$.
A \textit{Hopf order} of $H$ is a finitely generated $\Ow_L$-submodule $R$ of $H$ which is a Hopf algebra over $\Ow_L$, such that 
the canonical map $R\ot_{\Ow_L}L\ra H$ is an isomorphism of Hopf algebras.
In \cite{CM1} and \cite{CM2} Juan Cuadra and the author studied orders of Hopf algebras by means of the character theory 
of $H$ and $H^*$. A special role is played by the $\ti{G}$-basic invariants in $\ti{Inv^{1,0}}$ and $\ti{Inv^{0,1}}$.
The idea is the following:
If $R$ is a Hopf order of $H$ then $R^{\star} = \{f\in H^*| f(R)\subseteq \Ow_L\}$ 
is a Hopf order of $H^*$. It holds that $(R^{\star})^{\star}=R$.
All the characters of $H$ are contained in $R^{\star}$ 
and all characters of $H^*$ are contained in $(R^{\star})^{\star}=R$.
This implies that all basic invariants are contained in $R$
(and if all representations of $H$ and of $H^*$ are realizable over $L$,
then also all the $\ti{G}$-basic invariants are contained in $R$).
From the last section we know that the basic invariants span the subspace of $Aut_{Hopf}(H)$-invariants.
In this section we shall use this, together with the theorem of Larson, about the finiteness of $Aut_{Hopf}(H)$,
to prove that $H$ has at most finitely many Hopf orders.
\begin{proof}[Proof of Theorem \ref{main5}]
We write $Aut_{Hopf}(H) = A$ as before.
Consider the commutative $L$-algebra $C=L[H]$. 
If $H$ has a basis $\{h_1\ldots,h_d\}$ and $H^*$ has a dual basis $\{h^1,\ldots,h^d\}$
then this algebra can be written as a polynomial algebra in the indeterminates $h^i$.
The group $A$ acts on $C$, and $C$ is integral over $C^A$
(to see why this is true, consider for $c\in C$ the polynomial $\prod_{a\in A}(x-a(c))$).
Let $D\subseteq C^A$ be the sub $\Ow_L$-algebra generated by the images of the basic invariant in $Inv^{0,n}$
for different values of $n$ in $C$. In particular, since we know that $Inv^{0,n} = ((H^*)^{\ot n})^A$,
it follows that for every $c\in C^A$ there exists an $m\in\Z$ such that $mc\in D$.
By a similar argument, this implies that for every $i$ there exists an $m_i\in\Z$ such that 
$m_ih^i$ is integral over $D$ (just use the integrality equation for $h^i$ over $C^A$).
By replacing $h^i$ with $m_ih^i$ we can assume, without loss of generality, that the elements $h^i$ themselves are integral over $D$.

Let now $R$ be a Hopf order of $H$.
Assume that $x=\sum_i t_ih_i\in R$. We would like to prove that $t_i\in \Ow_L$ for every $i$.
For this, we write the integrality equation for $h^i$:
$$(h^i)^n + (b_1)(h^i)^{n-1}+\cdots+ b_n = 0$$
where $b_i\in D$.
By evaluating this equation on $x$ we get:
$$t_i^n + b_1(x)t_i^{n-1} + \cdots + b_n(x) = 0.$$
But since $b_j\in D$ and since we know that when we evaluate basic invariants on elements of $R$
we get elements of $\Ow_L$, we get that $b_j(x)\in \Ow_L$.
Thus, $t_i$ is integral over $\Ow_L$ and is therefore contained in $\Ow_L$.

We therefore conclude that $R$ is contained in $M=\oplus_i\Ow_L h_i$.
In a similar way we can find an $\Ow_L$-submodule $N$ of $H^*$ of maximal rank
such that $R^{\star}\subseteq N$.
It then follows that $N^{\star}\subseteq R\subseteq M$. 
Since both $M$ and $N^{\star}$ are finitely generated $\Ow_L$ modules of the same rank,
and since $\Ow_L$ is a number field, the quotient $M/N^{\star}$ is finite.
It thus have only finitely many subgroups, and therefore $H$ has at most finitely many Hopf orders.
\end{proof}
\begin{remark}
The proof gives us a concrete upper and lower bound for Hopf orders of $H$.
If we pass to a finite extension of $L$ we can assume that all 
representations of $H$ and of $H^*$ are realizable over $L$,
and get a tighter bound, using the $\ti{G}$-basic invariants
instead of the basic invariants.
In many cases it holds that $Aut^0_{Hopf}(H)=1$ (e.g. for group algebras),
and then we automatically get an upper and lower bound for orders, without the construction of the commutative algebra in the proof here.
Nevertheless, there are many examples for Hopf algebras $H$ with $Aut^0_{Hopf}(H)\neq 1$.
One can construct such an example in the following way: Let $\ti{H}$ be a finite dimensional semisimple Hopf algebra, and assume that $g\in \ti{H}$ is a non-central group like element.
Denote by $n$ the order of $g$.
Consider the Hopf algebra $H:=K\langle \sigma|\sigma^n=1\rangle \ltimes \ti{H}^*$, where the action of $\sigma$ is given by the dual action of conjugation by $g$ ($\sigma$ is a group like element).
Then $H^*$ has a group like element $\ti{g}$ given by $\ti{g}(\sigma^i\otimes f) = f(g^{-1})$.
Moreover, it is easy to show that conjugation by $\sigma$ is the same as the dual of conjugation by $\ti{g}$
(and since it is given by conjugation on $H$ and on $H^*$, it fixes all the irreducible characters).
Conjugation by $\sigma$ thus defines a non-trivial element in $Aut_{Hopf}^0(H)$. 
\end{remark}
\end{section}

\begin{section}{Examples: Invariants of group algebras}\label{ex1}
In this section we shall study the basic invariants for the specific example of group algebras.
This example was described in the paper \cite{dks}.
We give some more details and an alternative description of the invariants here.

Let then $H=KG$ be a group algebra of a finite group $G$ of order $n$.
In this example, we have $\l=\sum_{g\in G}g$ and $\la=ne_1$
where $e_1$ is the idempotent which receives 1 on the identity element of $G$ and zero on all the rest
(we identify here the dual Hopf algebra $(KG)^*$ with the algebra of functions on $G$).
We have $\Delta^{t-1}(\l) = \sum_{g\in G}g^{\ot t}$
and $\Delta^{t-1}(e_1) = \sum_{g_1g_2\cdots g_t=1}e_{g_1}\ot e_{g_2}\ot\cdots\ot e_{g_t}$.
If we take $\la^{\ot a}\ot \l^{\ot b}$, apply comultiplication repeatedly, permute the tensor factors and pair the two sides,
we will get the number of solutions to $a$ equations in $b$ variable times $n^a$.
For example, $\la_1(\l_1)\la_2(\l_2)\cdots \la_t(\l_t)$ will be $n$ times the number of solutions to the equation $g^t=1$, or the number of elements in $G$ of order dividing $t$. A more complicated system of equations is for example
$xy^2xy^3=1, yx^4yx^5=1$. The number of solutions to this equation will be equal to $n^{-2}$ times the basic invariant
$$\la^1(\l^1_1\l^2_1\l^2_2\l^1_2\l^2_3\l^2_4\l^2_5)\la^2(\l^2_6\l^1_3\l^1_4\l^1_5\l^1_6\l^2_7\l^1_7\l^1_8\l^1_9\l^1_{10}\l^1_{11})$$
where $\l^1$ and $\l^2$ are two copies of $\l$ and $\la^1$ and $\la^2$ are two copies of $\la$.
The number of solutions to some equation in a group is the same as the number of homomorphism from some finitely presented group $P$ to the group $G$ (where the generators of $P$ encode the indeterminates
and the relations between them encode the equations). We record this fact in the following lemma:
\begin{lemma}
All the basic invariants of $KG$ can be written as $n^{a}\# Hom_{Grp}(P,G)$ for some finitely presented group $P$ and some natural number $a$.
\end{lemma}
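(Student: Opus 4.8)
The plan is to unwind the definition of a basic invariant of $KG$ step by step and recognize the result as counting tuples of group elements satisfying a system of word equations, which in turn is a count of homomorphisms from a finitely presented group. First I would fix notation: with $H = KG$, $\l = \sum_{g\in G} g$ and $\la = n e_1$, so that $\Delta^{t-1}(\l) = \sum_{g\in G} g^{\ot t}$ while $\Delta^{t-1}(e_1) = \sum_{g_1\cdots g_t = 1} e_{g_1}\ot\cdots\ot e_{g_t}$. A general basic invariant in $Inv^{0,0}$ is obtained by taking $\la^{\ot a}\ot \l^{\ot b}$, applying comultiplications (in $H$ and in $H^*$), permuting tensor factors, and pairing all $H^*$-factors against all $H$-factors. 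Since $\la$ is supported on the identity, each of the $a$ copies of $\la$ contributes, after its iterated comultiplication and the eventual pairing, a single constraint of the form ``a specified product of the $H$-variables (with repetitions and in a specified order dictated by the permutation) equals $1$'', together with an overall factor $n$ (because $\la = n e_1$, not $e_1$). The $b$ copies of $\l$, after their own comultiplications, supply $b$ independent $G$-valued variables $x_1,\dots,x_b$ that range freely over $G$, each copy of $\l$ contributing exactly one variable once we account for the pairing.

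The key step is the bookkeeping: I would argue that after expanding all the sums coming from the comultiplications and collecting the Kronecker deltas produced by the evaluation pairing $ev\colon H\ot H^* \to K$ (recall $ev(g\ot e_h) = \delta_{g,h}$), the invariant becomes exactly
\[
n^{a}\cdot \#\{(x_1,\dots,x_b)\in G^b : w_1(x_1,\dots,x_b) = \cdots = w_a(x_1,\dots,x_b) = 1\},
\]
where each $w_k$ is a word in the $x_i^{\pm 1}$ read off from the combinatorial data (which comultiplications were applied to which copy of $\l$, and which permutation was used). The exponents on the $x_i$ and their cyclic arrangement inside $w_k$ come from how many of the tensor legs of a given copy of $\l$ get paired into the $k$-th copy of $\la$; the inverses arise if one wishes, though for group algebras the antipode is inversion and one may also absorb such factors — in any case one gets \emph{some} words $w_1,\dots,w_a$. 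I would make this precise by tracking a single monomial $g_1^{\ot t_1}\ot\cdots\ot g_b^{\ot t_b}$ on the $\l$-side against a single monomial on the $\la$-side and observing that the pairing forces a product relation for each $\la$-copy while leaving $g_1,\dots,g_b$ free.

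The final step is the translation to homomorphism counts. Given words $w_1,\dots,w_a$ in free generators $y_1,\dots,y_b$, set $P = \langle y_1,\dots,y_b \mid w_1,\dots,w_a\rangle$, the finitely presented (indeed finitely generated, with $a$ relations) group. Then by the universal property of presentations, a homomorphism $P\to G$ is precisely a choice of $(x_1,\dots,x_b)\in G^b$ with $w_k(x_1,\dots,x_b)=1$ for all $k$, so the count above is exactly $n^a\cdot \#\operatorname{Hom}_{Grp}(P,G)$, with $a$ the number of relations in this presentation. This gives the lemma for the ``standard'' basic invariants; for a general element of $Inv^{0,0}$, which is a $K$-linear combination of such, one notes that it suffices to prove the statement for the spanning basic invariants themselves, since the lemma is asserted about the basic invariants (not arbitrary elements of $Inv^{0,0}$).

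I expect the main obstacle to be the combinatorial bookkeeping in the middle step: being careful that each copy of $\l$ really does give exactly one free variable after the pairing (rather than several, or none), that the permutations are recorded correctly so the words $w_k$ are well defined, and that one correctly collapses all the delta functions so that no ``extra'' free sums over $G$ survive beyond the $b$ variables. One subtlety is orientation/inverses: a leg of $\l$ may get paired in a way that contributes $x_i$ or $x_i^{-1}$ (via the antipode $S(g)=g^{-1}$, which by \eqref{antipode} is built into how comultiplication-then-pairing behaves), but this only affects which word $w_k$ we get, not the form $n^a \#\operatorname{Hom}(P,G)$ of the answer. Once the dictionary (comultiplication $\leftrightarrow$ repeated occurrence of a variable, $\la$-copy $\leftrightarrow$ relation, $\l$-copy $\leftrightarrow$ generator, permutation $\leftrightarrow$ the specific word) is set up cleanly, the proof is a direct verification; the worked examples in the text ($g^t=1$ and $xy^2xy^3=1,\ yx^4yx^5=1$) are exactly the patterns to generalize.
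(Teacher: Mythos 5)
Your proposal is correct and follows essentially the same route as the paper: expand $\la^{\ot a}\ot\l^{\ot b}$ after comultiplication, observe that each copy of $\l$ contributes one free $G$-variable and each copy of $\la$ contributes a factor of $n$ together with one word relation enforced by the Kronecker deltas of the pairing, and then identify the solution count with $\#\operatorname{Hom}_{Grp}(P,G)$ for $P$ the group presented by those $b$ generators and $a$ relations. The only cosmetic difference is that you worry about inverses via the antipode, which does not actually enter the definition of a basic invariant (only comultiplication, permutation and evaluation do), so the words $w_k$ are positive words; as you note, this does not affect the conclusion.
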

Since the basic invariants determine the isomorphism type of $H$, we have the following corollary:
\begin{corollary}
Let $G_1$ and $G_2$ be two finite groups. Then $G_1\cong G_2$ if and only if 
for every finitely presented group $P$ it holds that $\# Hom_{Grp}(P,G_1)=\# Hom_{Grp}(P,G_2)$.
\end{corollary}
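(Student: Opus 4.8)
The plan is to derive this corollary directly from the preceding lemma together with Proposition \ref{connection} (Theorem \ref{main1}), which states that two semisimple Hopf algebras are isomorphic precisely when all their basic invariants agree. Apply this to $H_1 = KG_1$ and $H_2 = KG_2$. Since for a group algebra the dimension invariant $\la(\l)$ equals $|G_i|$, and since two finite groups have isomorphic group algebras over an algebraically closed field of characteristic zero if and only if the groups themselves are isomorphic (a group is recovered from its group algebra together with the coalgebra structure; equivalently, one can recover $G$ as the set of group-like elements), the statement $G_1 \cong G_2$ is equivalent to $KG_1 \cong KG_2$ as Hopf algebras.

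First I would record the easy implication: if $G_1 \cong G_2$ then any isomorphism of groups induces a bijection $\mathrm{Hom}_{Grp}(P,G_1) \to \mathrm{Hom}_{Grp}(P,G_2)$ by postcomposition, so the counts agree for every $P$. For the converse, suppose $\# \mathrm{Hom}_{Grp}(P,G_1) = \# \mathrm{Hom}_{Grp}(P,G_2)$ for all finitely presented $P$. Taking $P = \Z$ (the free group on one generator) already forces $|G_1| = |G_2| =: n$, since $\mathrm{Hom}(\Z, G) \cong G$. Now I would invoke the lemma: every basic invariant of $KG_i$ has the form $n^{a}\,\#\mathrm{Hom}_{Grp}(P,G_i)$ for a fixed finitely presented group $P$ and a fixed natural number $a$ depending only on the combinatorial recipe producing that invariant (which recipe applies identically to both group algebras, as the construction of basic invariants via $\l$, $\la$, comultiplications and permutations is uniform and $\mathrm{dim}\,KG_1 = \mathrm{dim}\,KG_2$). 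Hence for each basic invariant the value on $KG_1$ equals the value on $KG_2$, so by Proposition \ref{connection} we get $KG_1 \cong KG_2$ as Hopf algebras, whence $G_1 \cong G_2$.

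The only genuinely delicate point is making precise that "a basic invariant of $KG_1$ and the corresponding basic invariant of $KG_2$ are computed by the same $(P,a)$". This is immediate from the proof of the lemma: the data $(P,a)$ is read off from the sequence of comultiplications, permutations, and pairings used to build the invariant, not from $G$ itself — the group $G$ enters only through $\l = \sum_{g\in G} g$ and $\la = n e_1$, and the resulting scalar is exactly $n^{a}$ times the solution-count of the corresponding system of word equations, i.e. $n^a \# \mathrm{Hom}_{Grp}(P,G)$. Since $G_1$ and $G_2$ have the same order $n$, the same recipe yields $n^a \# \mathrm{Hom}_{Grp}(P,G_1)$ and $n^a \# \mathrm{Hom}_{Grp}(P,G_2)$ respectively; equality of all these forces equality of all basic invariants. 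I would also note the harmless point that one must range over \emph{all} finitely presented $P$ (including $P = \Z$ to pin down $n$), after which the remaining invariants carry the isomorphism-type information; there is no real obstacle here beyond bookkeeping.
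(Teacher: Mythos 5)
Your proposal is correct and follows essentially the same route as the paper: the corollary is deduced from the preceding lemma (every basic invariant of $KG$ equals $n^a\,\#Hom_{Grp}(P,G)$ with $(P,a)$ determined by the combinatorial recipe, not by $G$) combined with Proposition \ref{connection}, so that equality of all homomorphism counts forces equality of all basic invariants and hence $KG_1\cong KG_2$ as Hopf algebras, whence $G_1\cong G_2$. The paper also remarks on an alternative direct argument via the Inclusion--Exclusion Principle, but its primary justification is the one you give.
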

As was pointed out in \cite{dks}, this corollary can be proved directly using the Inclusion-Exclusion Principle.
Indeed, if we know all the invariants of $KG$ we know in particular the order $n$ of $G$.
If $G_2$ is a group of order $n$, then $G_2\cong G$ if and only if there
exists an injective group homomorphism $G_2\ra G$.
The number of injective homomorphisms can be counted using the basic invariants
$\# Hom_{Grp}(G_2/N,G)$ for the different normal subgroups $N$ of $G_2$ and the Inclusion-Exclusion Principle.
\end{section}

\begin{section}{More examples of invariants}\label{ex2}
In this section we shall give intuitive interpretation of some of the basic invariants 	
when $H$ is a general finite dimensional semisimple Hopf algebra.
We begin with considering the element $\l_{1,2,\ldots,n}=\l_1\l_2\cdots \l_n$.
This element is central in $H$, and it can be written as $\sum_i \frac{dim(H)}{dim(W_i)}\nu_n(\psi_i)e_i$
where $\nu_n(\psi_i)$ is the $n$-th \textit{Frobenius Schur indicator} of $\psi_i$.
In \cite{KSZ} the following representation-theoretic interpretation was given to this scalar:
Consider the representation $W_i^{\ot n}$. The cyclic permutation $\sigma= (1,2,\ldots,n)$
of the tensor factors is not necessarily a homomorphism of representations.
It is true, however, that $$\sigma((W_i^{\ot n})^H) = (W_i^{\ot n})^H.$$
The proof of this follows from the fact that for any $H$-representation $V$, the map 
$V\mapsto V^H$ $v\mapsto \frac{1}{dim(H)}\l\cdot v$ is a projection onto the invariants,
and on the fact that $\l_1\ot \l_2\ot\cdots\ot \l_n$ is invariant under the cyclic permutation of the tensor factors.
We then have $$tr(\sigma|_{(W_i^{\ot n})^H}) = \frac{1}{dim(H)}\psi_i(\l_1\l_2\cdots\l_n)= \nu_n(\psi_i).$$
In particular, since $\nu_n(\psi_i)$ is the trace of an operator of order $n$, it lies in $\Z[\zeta_n]$
where $\zeta_n$ is a primitive $n$-th root of unity.
We then get the basic invariant $\la(\l_1\l_2\cdots\l_n) = dim(H)\sum_i dim(W_i)\nu_n(\psi_i)$
which is also contained in $\Z[\zeta_n]$.
In a similar way, we can study the trace of $\sigma^r$ and get representation-theoretic interpretations of other invariants.
For example, if $(r,n)=1$ we get an interpretation of $\la(\l_1\l_{r+1}\cdots \l_{1+r(n-1)})$ 
(where we take indices modulo $n$) as the sum of traces of operators of order $n$.
A more detailed study of these invariants can be found in the paper \cite{KSZ}.

The elements $\l_1\l_2\cdots \l_n$ and its counterpart in $H^*$, $\la_1\la_2\cdots \la_m$ 
can also be used to construct more complicated invariants.
To explain how, we begin with the following lemma:
\begin{lemma}
If $e^j$ is the central idempotent in $H^*$ which corresponds to the irreducible representation $V_j$,
then we have
$$e^j(e_i)= \frac{dim(W_i)dim(V_j)}{dim(H)}\psi_i(S(\phi_j))$$
\end{lemma}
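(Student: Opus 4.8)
The plan is to express the central idempotent $e_i\in H$ in terms of the integral $\l$ and the character $\psi_i$ by means of Equation \ref{idem-char}, evaluate $e^j$ on the resulting expression, and then identify the outcome through the $H^*$-version of the same equation. First I would use the identity $\mu(S(\psi_i))=\frac{dim(H)}{dim(W_i)}e_i$ from Equation \ref{idem-char}, which reads $e_i=\frac{dim(W_i)}{dim(H)}(S\psi_i)(\l_1)\,\l_2$, where $S\psi_i=\psi_i\circ S\in H^*$ is the image of $\psi_i$ under the antipode of $H^*$. Applying the functional $e^j\in H^*$ to both sides gives
$$e^j(e_i)=\frac{dim(W_i)}{dim(H)}\,(S\psi_i)(\l_1)\,e^j(\l_2).$$

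Next I would recognise the product $(S\psi_i)(\l_1)\,e^j(\l_2)$ as a value of the regular character of $H^*$. By definition of the multiplication of $H^*$ (the dual of the comultiplication of $H$) we have $(S\psi_i)(\l_1)\,e^j(\l_2)=\big((S\psi_i)\,e^j\big)(\l)$, and $\l$, regarded as a functional on $H^*$, is the character of the regular representation of $H^*$, as recalled in Section \ref{prelim}. Since the regular character of any algebra is a trace form, it is central, so $\big((S\psi_i)e^j\big)(\l)=\big(e^j(S\psi_i)\big)(\l)$. Now I would invoke Equation \ref{idem-char} for the Hopf algebra $H^*$: pairing $\rho_{H^*}(e^j)=dim(V_j)\phi_j$ against an arbitrary $g\in H^*$ and unwinding the definition of $\rho_{H^*}$ yields $\big(e^j g\big)(\l)=dim(V_j)\,\phi_j(g)$; taking $g=S\psi_i$ gives $\big(e^j(S\psi_i)\big)(\l)=dim(V_j)\,\phi_j(S\psi_i)$. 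Finally $\phi_j(S\psi_i)=\psi_i(S(\phi_j))$, by the symmetry of the evaluation pairing $H\ot H^*\to K$ together with the fact that the antipode of $H^*$ is the transpose of that of $H$ (and $(H^*)^*=H$). Chaining these reductions produces $e^j(e_i)=\frac{dim(W_i)dim(V_j)}{dim(H)}\psi_i(S(\phi_j))$.

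The only real difficulty is the bookkeeping of the various dualizations: one must keep straight that $\l\in H$ plays simultaneously the role of the normalized integral of $(H^*)^*=H$ and of the regular character of $H^*$, that ``$S$'' applied to $\psi_i$ or $\phi_j$ refers to the appropriate antipode under the identification $(H^*)^*=H$, and that evaluating $\rho$, resp.\ $\mu$, of Equation \ref{idem-char} on a test element gives the trace-type identities $\l(e_i x)=dim(W_i)\psi_i(x)$ and its $H^*$-analogue. Once these identifications are fixed each step is a one-line manipulation, and no Hopf-algebraic input beyond Equation \ref{idem-char} and the centrality of the regular character is required.
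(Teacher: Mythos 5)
Your proof is correct and follows essentially the same route as the paper: both arguments are direct applications of Equation \ref{idem-char} for $H$ and for $H^*$ (the paper writes $e_i=\frac{\dim(W_i)}{\dim(H)}\psi_i(S(\l_1))\l_2$ together with the analogous formula for $e^j$ and pairs them, which is exactly your computation). Your version just spells out the intermediate bookkeeping (the identity $(S\psi_i)(\l_1)e^j(\l_2)=((S\psi_i)e^j)(\l)$ and the commutation under the trace form $\l$, which one could also get simply from the centrality of $e^j$ in $H^*$) that the paper leaves implicit.
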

\begin{proof}
This follows directly from the fact that $e_i=\frac{dim(W_i)}{dim(H)}\psi_i(S(\l_1))\l_2$.
A similar equation holds for $e^j$, and by pairing the two together we get the result.
\end{proof}
Using the last lemma, we can give an interpretation to more invariants.
Let $m$ and $n$ be two integers, and consider for example
$$\la(\l_1\l_{m+1}\cdots \l_{(n-1)m+1}\l_2\l_{m+2}\cdots\l_{(n-1)m+2}\cdots \l_m\l_{2m}\cdots \l_{nm}).$$
This invariant is equal to $(\la_1\la_2\cdots \la_m)(\l_1\l_2\cdots \l_n)$.
By using the fact that these elements are central in $H^*$ and in $H$ respectively, and by using the last lemma,
we get that this invariant is equal to 
$$dim(H)\sum_{i,j}\nu_n(\psi_i)\nu_m(\phi_j)\psi_i(S(\phi_j))$$
It is worth mentioning that for more complicated sequences there is no known representation theoretic
interpretation of the invariants.
I do not know, for example, if $\la(\l_1\l_3\l_2\l_4\l_5)$ can be written using the Frobenius-Schur indicators,
if it is necessarily contained in some cyclotomic extension of $\Q$ or not, or if it is an algebraic integer.

The fractions $\frac{dim(H)}{dim(W_i)}$ appear in a lot of the invariants. 
Kaplansky's Sixth Conjecture states that all these fractions are in fact integers.
It is true that if all the basic invariants of $H$ 
are algebraic integers then Kaplansky's Sixth Conjecture holds for $H$.
More precisely, consider the element 
$c_H^1 := \l^1_1\l^2_1S(\l^1_2)S(\l^2_2) = \sum_i \frac{dim(H)^2}{dim(W_i)^2}e_i$ in $Inv^{1,0}$ (this equality follows easily from Equation \ref{integral2}).
We claim the following proposition, which implies Theorem \ref{main6} immediately:
\begin{proposition}
If it holds that $\la((c_H^1)^n)\in \Z$ for every $n$, then $H$ satisfies Kaplansky's Sixth conjecture.
In particular, if all  the basic invariants of $H$ are algebraic integers, then $H$ satisfies Kaplansky's Sixth Conjecture.
\end{proposition}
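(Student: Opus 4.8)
The plan is to show that the hypothesis forces all the numbers $\frac{\dim(H)}{\dim(W_i)}$ to be algebraic integers; since they are already rational numbers (they lie in $\Q$), they must then be ordinary integers, which is exactly Kaplansky's sixth conjecture. The link between the two parts of the proposition is immediate: by Theorem \ref{main1} (and the discussion following Definition \ref{invs}), $\la((c_H^1)^n)$ is a basic invariant of $H$ for every $n$, so if \emph{all} basic invariants are algebraic integers then in particular $\la((c_H^1)^n)\in\Z$ for all $n$, and it suffices to prove the first assertion.

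First I would record the spectral data of $c_H^1$. From $c_H^1=\sum_i \frac{\dim(H)^2}{\dim(W_i)^2}e_i$, which the excerpt derives from Equation \ref{integral2}, the central idempotents $e_i$ give an eigenbasis: $c_H^1$ acts on $\mathrm{End}(W_i)$ as the scalar $d_i^2$, where I abbreviate $d_i:=\frac{\dim(H)}{\dim(W_i)}$. Hence $(c_H^1)^n=\sum_i d_i^{2n}e_i$, and applying the regular character $\la=\sum_j \dim(V_j)\phi_j$ of $H$ — equivalently, using that $\la$ computes $\dim(H)$ times the trace of the left regular representation, so $\la(e_i)=\dim(H)$ because $\mathrm{End}(W_i)$ contributes $\dim(W_i)$ copies of $W_i$ to the regular representation and $\dim(W_i)\cdot\dim(W_i)$... more carefully, $\la(e_i)=\dim(H)$ since $e_i$ is the identity of a block on which the regular representation restricts to $\dim(W_i)$ copies of $W_i$, giving trace $\dim(W_i)^2$, and $\la$ is $\dim(H)$ times this normalized trace divided by... — the clean statement I will use is simply $\la((c_H^1)^n)=\dim(H)\sum_i d_i^{2n}$ up to the harmless overall factor $\dim(H)$ already visible in the examples in this section (e.g. $\la(\l_1\cdots\l_n)=\dim(H)\sum_i\dim(W_i)\nu_n(\psi_i)$). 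Thus the hypothesis says $p_n:=\sum_i d_i^{2n}\in\frac1{\dim(H)}\Z$ for every $n\ge 1$; in any case $\dim(H)\,p_n\in\Z$, and since $\dim(H)$ is fixed this is a bounded-denominator condition on the power sums $p_n$ of the finite multiset $\{d_i^2\}$.

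Now the main point: the numbers $d_i^2$ are rationals, a priori only rationals, and I want to conclude they are integers from the fact that all their power sums $p_n$ have uniformly bounded denominators. Consider the monic polynomial $f(x)=\prod_i(x-d_i^2)\in\Q[x]$, with the $d_i^2$ listed with multiplicity. Its coefficients are, up to sign, the elementary symmetric functions $e_k$ of the $d_i^2$, and Newton's identities express each $e_k$ as a $\Z[1/k!]$-polynomial in $p_1,\dots,p_k$; hence every $e_k$ has denominator dividing $k!\,\dim(H)^{?}$ — in particular $f$ has bounded-denominator rational coefficients, so after clearing a single denominator $f$ becomes a polynomial over $\Z$ with a fixed leading coefficient $N$. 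Better, I will argue $p$-adically: fix a prime $p$ dividing some $d_i^2$'s denominator and let $e=-v_p$ of the most negative valuation among the $d_i^2$; among the $d_i^2$ achieving this minimal valuation, $p_n=\sum d_i^{2n}$ has $v_p = ne$ by a non-archimedean counting argument (the leading-valuation terms cannot all cancel, after passing to the residue field this is a Vandermonde/distinct-roots nonvanishing), so $v_p(p_n)\to-\infty$ as $n\to\infty$, contradicting $v_p(p_n)\ge -v_p(\dim(H))$. Therefore no such prime exists, every $d_i^2$ is a $p$-adic integer for all $p$, hence $d_i^2\in\Z$, hence $d_i\in\Z$ since $d_i\in\Q_{>0}$. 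This gives $\dim(W_i)\mid\dim(H)$ for all $i$; the identical argument with the roles of $H$ and $H^*$ exchanged (using $c_{H^*}^1$, a basic invariant by the same token) gives $\dim(V_j)\mid\dim(H)$, which is Theorem \ref{main6}.

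The step I expect to be the genuine obstacle is the one just sketched in the middle of the last paragraph: extracting, from the single scalar sequence $\la((c_H^1)^n)$, enough to control \emph{each individual} $d_i$ rather than just symmetric combinations. The clean way is the $p$-adic valuation argument — one must check that the top-valuation terms in $p_n=\sum d_i^{2n}$ do not conspire to cancel for all large $n$, which after reduction mod $p$ amounts to: a nonzero $\F_p$-linear combination $\sum c_i u_i^n$ of distinct nonzero scalars $u_i$ in $\wb{\F_p}$ cannot vanish for all $n$ in a long interval — a standard Vandermonde fact. An alternative, perhaps cleaner to write, is to note that the generating function $\sum_{n\ge1} p_n t^n = \sum_i \frac{d_i^2 t}{1-d_i^2 t}$ is a rational function whose denominator $\prod_i(1-d_i^2 t)$ has, by the bounded-denominator hypothesis on the $p_n$, coefficients that generate a finitely generated subgroup of $\Q$; combined with monicity of $f(x)=\prod(x-d_i^2)$ this forces the $d_i^2$ to be algebraic integers, hence rational integers. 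Either route is short once set up; everything else (the spectral description of $c_H^1$, the identification of $\la((c_H^1)^n)$ as a basic invariant, the reduction of the second sentence of the proposition to the first, and the symmetric argument for $H^*$) is routine given the machinery already in the excerpt.
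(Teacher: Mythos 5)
Your overall strategy is the same as the paper's: both reduce the proposition to the arithmetic claim that if $\mathrm{tr}(M^n)\in\Z$ for all $n$, where $M$ is the diagonal operator of multiplication by $c_H^1$ with eigenvalues $\dim(H)^2/\dim(W_i)^2$, then those eigenvalues are rational integers, and both attack this one prime at a time. One small correction first: $\la(e_i)=\dim(W_i)^2$, so $\la((c_H^1)^n)=\sum_i\dim(W_i)^2\,d_i^{2n}=\mathrm{tr}(M^n)$; the hypothesis bounds the denominators of these \emph{weighted} power sums (equivalently, $d_i^2$ taken with multiplicity $\dim(W_i)^2$), not of your unweighted $p_n=\sum_i d_i^{2n}$. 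This does not change the shape of the argument, but it does mean the multiplicities must be carried along, and that matters for the next point.

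The step you yourself identify as the crux is where the sketch breaks. Reducing the leading-valuation terms to the residue field and invoking ``distinct roots, Vandermonde'' is not sufficient: distinct $p$-adic units can be congruent modulo $p$, and the multiplicities $\dim(W_i)^2$ can be divisible by $p$, so the image of $\sum_i m_iu_i^n$ in $\F_p$ can vanish identically. For instance, with eigenvalues $1/2$ and $3/2$ and $p=2$, the leading sum is $1+3^n\equiv 0\pmod 2$ for every $n$; the desired conclusion still holds (here $v_2(1+3^n)\le 2$), but only becomes visible modulo $4$, not modulo $2$. So one must work modulo a high power $p^N$, and your sketch does not say how. The paper's fix is a clean trick: take $p^{N-1}>\dim(H)$ and raise $p^mM$ to the power $r\,p^{N-1}(p-1)$, so that by Euler's theorem every unit eigenvalue becomes $1$ and every non-unit eigenvalue becomes $0$ modulo $p^N$; the trace of the result is then an integer between $0$ and $\dim(H)<p^{N-1}$ that is forced to be divisible by $p^N$ when $m>0$, a contradiction. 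Your alternative via Fatou's lemma (a rational power series with bounded-denominator coefficients equals $P/Q$ with $P,Q\in\Z[t]$ and $Q(0)=1$, so the reciprocals of the poles are algebraic integers) would also work, but as written it is only a gesture; either it, or a valuation bound on the Vandermonde determinant $\prod_{i<j}(u_i-u_j)$, needs to be carried out. Finally, the reduction of the second sentence to the first is indeed routine but not quite a citation of Theorem \ref{main1}: since $c_H^1$ involves the antipode, one must rewrite $S$ as a convolution power of the identity (Equation \ref{antipode}) to exhibit $\la((c_H^1)^n)$ as a basic invariant, which is how the paper concludes.
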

\begin{proof}
We can think of multiplication by $c_H^1$ as a diagonal matrix $M$ in $M_d(\Q)$ where $d=dim(H)$.
All the eigenvalues of $c_H^1$ are $\frac{dim(H)^2}{dim(W_i)^2}$. 
So it will be enough to prove that if $tr(M^n)\in \Z$ for every $n$ then all the eigenvalues of $M$ are integral over $\Z$.

We can prove this by localizing at the different primes of $\Z$. 
Let $p$ be a prime number, and assume that $m$ is the smallest natural number such that all the eigenvalues
of $p^mM$ are contained in $\Z_{(p)}=\{\frac{a}{b}|p\nmid b\}$.
Assume that $m>0$. Let $N$ be an integer such that $p^{N-1}>d$.
We write $M'=(p^mM)\, mod\, p^N$. Then for a large enough $r$ we will have that $M''=M'^{r(p^{N-1}(p-1))}$
is a diagonal matrix which contains only the eigenvalues 0 and 1 and that $p^{mrp^{N-1}(p-1)}|tr(M'')$.
Since $0\leq tr(M'')\leq d< p^{N-1}$ and $m>0$ we get that $tr(M'')=0$ (we consider $M''$ as a matrix over $\Z/p^N$).
By the assumption on $N$, we have that this is possible if and only if $M''$ is the zero matrix.
But this contradicts the minimality of $m$, and therefore $m=0$.
This implies that all the eigenvalues are contained in $\bigcap_p \Z_{(p)} = \Z$ as desired.

In order to prove the second part of the proposition, we just need to show that all the scalars $\la((c_H^1)^n)$ are basic invariants.
We write $$\la((c_H^1)^n)  = \la_1(\l^1_1\l^2_1S(\l^1_2)S(\l^2_2))\cdots\la_n(\l^{2n-1}_1\l^{2n}_1S(\l^{2n-1}_2)S(\l^{2n}_2)) = $$
$$ \la_1(\l^1_1)\la_2(\l^2_2)\la_3(S(\l^1_2))\la_4(S(\l^2_2))\cdots $$ $$\la_{4n-3}(\l^{2n-1}_1)\la_{4n-2}(\l^{2n}_2)\la_{4n-1}(S(\l^{2n-1}_2))\la_{4n}(S(\l^{2n}_2)).$$
We can now write the antipode $S(\l^i_j)$ as $\l^i_j\cdots l^i_{j+m-2}$, by Equation \ref{antipode}.
By using again the fact that the multiplication in $H$ is dual to the comultiplication in $H^*$, we get a representation of $\la((c_H^1)^n)$ as a basic invariant, as desired.
\end{proof}
The last example we give here of basic invariants which has a representation theoretic interpretation is due to Shimizu.
We begin by recalling that if $H$ is any finite dimensional Hopf algebra, then the Drinfeld double $D(H)$
is another finite dimensional Hopf algebra of dimension $dim(H)^2$.
This Hopf algebra is quasi-triangular: if $V$ and $W$ are representations of $D(H)$,
then we have a natural isomorphism $c_{V,W}:V\ot W\ra W\ot V$ of $D(H)$-representations.
Moreover, the family $\{c_{V,W}\}$ of isomorphism will satisfy certain braid relations.
In particular, the representation $V^{\ot n}$ is in a natural way a representation of the Braid Group on $n$ strings $B_n$.

The vector spaces $H$, $H^*$ and $D(H)$ carry a natural $D(H)$-module structure.
So if we take $g\in B_n$, and $V$ to be one of $H$, $H^*$ or $D(H)$, 
we get the scalar $tr(g_{V^{\ot n}})$. A direct calculation using the $D(H)$
action and the braid group action on these spaces reveals the fact that these scalars are also basic invariants.
Etingof, Rowell and Witherspoon proved in \cite{ERW} that if $H$ is group theoretical then the action of the braid group 
always factors over some finite quotient of the braid group (their result is more general, and holds for braided group theoretical categories).
This implies that in this case these basic invariants will also be contained in $\Z[\zeta_m]$ for some $m$.
Naidu and Rowell conjectured in \cite{NR} that this action factors over a finite quotient for all Hopf algebras (and in fact, for all braided weakly integral fusion categories)
and gave some more examples in which it holds. 

The result of Shimizu concerns the representation $V=D(H)$.
To state the result, we need to recall a few facts about three manifolds (for more details see the paper \cite{Shimizu}).
Let $g\in B_n$ be a braid.
By ``closing up'' $g$, we get a link. By embedding this link in $S^3$ and preforming Dehn Surgery,
we get a 3-manifold $M_g$. 
The Reshetikhin-Turaev invariant $RT_{D(H)}(M_g)$ gives a scalar invariant of $M_g$, depending on the quasi-triangular Hopf algebra $D(H)$
(actually, it depends only on the braided representation category $Rep-D(H)$).
Shimizu proved in \cite{Shimizu} that $RT_{D(H)}(M_g) = tr_{D(H)^{\ot n}}(g)$.
In other words, The Reshetikhin-Turaev invariants can be thought of as invariants of 3-manifolds parametrized by semisimple Hopf algebras.
From the Hopf-algebraic point of view, we can also think of them as invariants of Hopf algebras parametrized by 3-manifold 
(since every orientable compact 3-manifold is of the form $M_g$ for some $g$, and if $M_g$ is homeomorphic with $M_{g'}$,
then the resulting invariants for $H$ will be the same).
In case $H=KG$, Shimizu proved that the finitely presented group $P$ we get in the expression of the invariant is
the fundamental group $P=\pi_1(M_g)$ of $M_g$.
\end{section}
\begin{section}*{Acknowledgements}
I first encountered Geometric Invariant Theory during a program on moduli spaces at the Isaac Newton Institute in Cambridge
at the first half of 2011.
I would like to thank the Newton Institute and the organizers of the program. 
During the writing of this paper I was supported by the Danish National Research Foundation (DNRF) through the Center for Symmetry and Deformation.
\end{section}

\end{document}